\documentclass[12pt,reqno]{amsart}
\usepackage{amsmath,amsthm,amsfonts,amssymb,amscd,  multicol, verbatim, enumerate,booktabs}
\usepackage{verbatim}
\usepackage{fullpage}
\input{xy}
\usepackage{color}
\usepackage{graphicx}
\graphicspath{ {./images/} }
\usepackage[english]{babel}
\usepackage{cite}
\usepackage{amsfonts}
\usepackage{latexsym}
\usepackage{amsopn}
\usepackage[all]{xy}
\usepackage{mathrsfs}
\usepackage{float}
\usepackage{amscd}
\usepackage{hyperref} 
\usepackage{tikz-cd}
\usepackage{ulem}

\newcommand{\id}{\mathrm{id}}

\newcommand{\CC}{\mathbb{C}}

\newcommand{\NN}{\mathbb{N}}

\newcommand{\FF}{\mathcal{F}}
\newcommand{\GG}{\mathcal{G}}

\newcommand{\OO}{\mathcal{O}}

\newcommand{\RR}{\mathbb{R}}

\newcommand{\Vu}{\mathcal{V}}

\newcommand{\Sn}{\mathbb{S}_n}

\newcommand\PS{\mathcal{PS}}

\newcommand{\sym}{\mathrm{Sym}}
\newcommand{\PPP}{\mathcal{P}}
\newcommand{\FP}{\mathcal{FP}}

\newcommand{\so}{{\mathbf s}_0}

\numberwithin{equation}{section}

\newtheorem{thm}{Theorem}[section]
\newtheorem{lem}[thm]{Lemma}
\newtheorem{prop}[thm]{Proposition}
\newtheorem{cor}[thm]{Corollary}
\newtheorem{remark}[thm]{Remark}
\newtheorem{example}[thm]{Example}
\newtheorem{defn}[thm]{Definition}

\title{Truncated factorized perverse sheaves on $\sym({\mathbb C})$}
\author{Giovanna Carnovale, Francesco Esposito, Lleonard Rubio y Degrassi}

\begin{document}

\begin{abstract}
Kapranov and Schechtman defined the  category $\FP$ of factorized perverse sheaves on $\sym(\CC)$ smooth along the stratification given by multiplicities and with values in a braided monoidal category $\Vu$. We define for each $d\in\NN$ the category  $\FP^{\leq d}$ of factorized perverse sheaves on $\coprod_{n\leq d}\sym^{n}(\CC)$ and  the category $\FP_{\leq d}$ of factorized perverse sheaves on the open subset of  $\sym(\CC)$ consisting of multi-sets with multiplicities bounded by $d$.

We prove that the natural restriction functor from $\FP_{\leq d}$ to $\FP^{\leq d}$ 
is an equivalence for any $d\in \NN$, and that $\FP^{\leq 1}$ and $\FP_{\leq 1}$ are equivalent to $\Vu$. We show that the full direct image $*$, the extension by zero $!$ and the intermediate extension $!*$ induce functors from $\FP_{\leq d}$ to $\FP$. 

In addition, we show that the  families $(\FP^{\leq d})_{d\in\mathbb N}$ and $(\FP_{\leq d})_{d\in\mathbb N}$  fit into systems of categories, compatible with restrictions and extensions, whose inverse limit is $\FP$.
\end{abstract}

\maketitle

“Le secret d'ennuyer est celui de tout dire.” (Voltaire)

\section{Introduction}

Factorized perverse sheaves have been introduced in geometric representation theory in the spirit of localization: categories of representations of relevant objects  are proved to be equivalent to categories of factorizable perverse sheaves on suitable spaces, \cite{BFS,Ga}. Kapranov and Schechtman have studied in detail in \cite{KS3} the category $\FP$ of factorized perverse sheaves on the infinite-dimensional space $\sym(\CC)$ of monic polynomials with coefficients in $\CC$, stratified by multiplicity of roots, and with values in a braided monoidal category $\Vu$. The objects are pairs given by a perverse sheaf on $\sym(\CC)$ and a family of isomorphisms of sheaves that take into account the monoid structure of $\sym(\CC)$. 

\medskip

The description of such perverse sheaves can be made very concrete: the category of perverse sheaves on $\CC^n$ is equivalent to the category of representations of an explicit quiver with monomial relations, whose vertices correspond to real faces in the (real) hyperplane arrangement corresponding to the reflection representation of $\Sn$ on $\CC^n$, \cite{KS1}. From this description one can recover a description of perverse sheaves on the graded component $\sym^n(\CC)$. Indeed, the latter can be embedded into the category of $\Sn$-equivariant perverse sheaves on $\CC^n$, \cite[Proposition 2.3.3]{KS3}.

\medskip

The main result in \cite{KS3} is that the category $\FP$ is equivalent to the category of connected bialgebras in $\Vu$, and that relevant objects in this category, such as the tensor algebra, the cotensor (big shuffle) algebra and the Nichols (shuffle) algebra of an object $V$ in $\Vu$ correspond respectively to the full direct image $*$, the extension by zero $!$ and intermediate extension $!*$ of the collection of local systems on the configuration spaces $\sym_{\neq0}^n(\CC)$ associated with the $n$-th tensor power of $V$. This establishes a relation between natural algebraic constructions in Hopf algebra theory and natural geometric constructions in the theory of perverse sheaves: this relation can  serve as a bridge between the two theories.

\medskip

Motivated by this connection, one is led to ask  what category of algebraic structures can be related to perverse sheaves on the family of configuration spaces of $n$ points for each $n$, i.e., the space of multiplicity-free monic polynomials. More generally, for $d\in\NN$, we wish to understand what category of algebraic structures is related to perverse sheaves on the family of open subsets $\sym^n_{\leq d}(\CC)$, corresponding to monic polynomials of degree $n$ whose root multiplicities are bounded by $d$. 

With this idea in the back of our minds, in the present paper we introduce for each $d\in\NN$ the categories $\FP_{\leq d}$ and  $\FP^{\leq d}$ 
of factorized perverse sheaves  on $\sym_{\leq d}(\CC)=\coprod_{n\in\NN}\sym^n_{\leq d}(\CC)$ and  $\sym^{\leq d}(\CC)=\coprod_{n\leq d}\sym^n(\CC)$, respectively, and develop their theory. 

\medskip

The main results of the paper are the following.

\medskip

\noindent{\bf Theorem A. }Let $d\in\NN_{\geq1}$.  The natural restriction functor induces an equivalence between $\FP_{\leq d}$ and $\FP^{\leq d}$. (Theorems \ref{prop:def-funtori-restrizione},  \ref{thm:equivalence}).

\medskip

In other words, the factorization data on a perverse sheaf $\FF$ on $\sym^{\leq d}(\CC)$ contains enough information to extend
$\FF$ to a factorized sheaf on the dense open subset $\sym_{\leq d}(\CC)$ of $\sym(\CC)$.

In the special case of $d=1$, corresponding to the configuration spaces,  the two categories $\FP_{\leq 1}$ to $\FP^{\leq 1}$ are isomorphic to $\Vu$, a result that was implicitly  contained in \cite[Theorem 3.3.3]{KS3}.

\medskip

We also construct extension functors, and prove natural properties.

\medskip

\noindent{\bf Theorem B. } Let $d\in\NN_{\geq1}$. The full direct image $*$, the extension by zero $!$ and the intermediate extension $!*$ induce functors from $\FP_{\leq d}$ to $\FP$. In addition,  the extension by zero $!$ functor is left adjoint to the natural restriction functor, (Theorems \ref{prop:def-estensione}, \ref{thm:extension-geometric}).

\medskip

Finally, we analyse the limit of these sequences of categories and functors.

\medskip

\noindent{\bf Theorem C. }The families $(\FP^{\leq d})_{d\in\mathbb N}$ and $(\FP_{\leq d})_{d\in\mathbb N}$  fit naturally into systems of categories that are compatible with restriction functors, and whose inverse limit is $\FP$ (Theorem \ref{thm:limit}).

\medskip  

We expect that  Kapranov and Schechtman's dictionary can be generalized to establish an equivalence between the categories $\mathcal{CB}^{\leq d}(\Vu)$ of  connected bialgebras modulo $(d+1)$ in $\Vu$ that we introduced in \cite{CERyD-A}  and the categories $\FP_{\leq d}$ and $\FP^{\leq d}$. We plan to construct a family of equivalences of categories $\FP_{\leq d}\to \mathcal{CB}^{\leq d}(\Vu)$ whose limit is the equivalence in \cite{KS3}. This would allow to translate the algebraic construction of $d$-th approximation of a graded algebra introduced in \cite{CERyD-A} to a geometric construction in terms of extensions sheaves on $\FP_{\leq d}$ and can hopefully be applied to questions concerning finite generation of Nichols algebras.  

\medskip

The paper is structured as follows: in Section 2 we recall basic facts on operads, the operad of little $2$-cubes, Deligne's interpretation of braided monoidal categories, the (outer) tensor product construction of perverse sheaves with values in a braided monoidal category $\Vu$. Next, in Section 3,  we focus  on the stratified space $\sym(\CC)$ and the open subsets of our interest and introduce the relevant functors  on perverse sheaves: restriction, extension, tensor product, multiplication embedding, and the monodromy isomorphism of functors. Then we are in a position to introduce factorization data on perverse sheaves on $\sym(\CC)$, $\sym^{\leq d}(\CC)$ and $\sym_{\leq d}(\CC)$, and the categories $\FP$, $\FP^{\leq d}$ and $\FP_{\leq d}$ of factorized perverse sheaves in Section 4. We show in Proposition \ref{prop:factvb} that factorization data are determined by a smaller family of conditions. We also show that the restriction and extension functors  at the level of perverse sheaves from Section 3 induce functors at the level of factorized sheaves with natural properties.  

In Section 5 we show that the restriction functors $\FP_{\leq d}\to \FP^{\leq d}$ are indeed equivalences (Theorem \ref{thm:equivalence}) providing an explicit quasi-inverse. In the last section, we show  that the inverse limit of the families $(\FP_{\leq d})_{d\in \NN}$ and $(\FP^{\leq d})_{d\in \NN}$ is $\FP$. 
A list of the used symbols  is to be found at the end of the paper. 

\section{Preliminaries}

In this section we recall the basic notions on braided monoidal categories, operads, and tensor products of perverse sheaves.

\subsection{Braided monoidal categories}
With the purpose of fixing notation, we recall the definitions of monoidal category and braiding. The reader is referred to \cite{EGNO} for a complete treatment. 

A monoidal category structure on a category $\mathcal{C}$ is the datum of a bifunctor 
    $\otimes: \mathcal{C}\times\mathcal{C}
    \rightarrow \mathcal{C}$, called tensor product, a natural isomorphism $a\colon(-\otimes -)\otimes\to -\otimes(-\otimes -)$, called the associativity constraint, an object $\mathbf{1}_{\mathcal{C}}$ of $\mathcal{C}$, and an isomorphism $\iota\colon \mathbf{1}_{\mathcal{C}}\otimes \mathbf{1}_{\mathcal{C}}\to \mathbf{1}_{\mathcal{C}}$, called unit constraint,  subject to the pentagon and unit compatibility axioms \cite[(2.2), (2.3)]{EGNO}. A monoidal category is said to be strict if for all objects $A,B,C$ in ${\mathcal C}$ there are equalities $(A\otimes B)\otimes C=A\otimes(B\otimes C)$ and $A\otimes \mathbf{1}_{\mathcal{C}}=A=\mathbf{1}_{\mathcal{C}}\otimes A$ and the associativity and unit constraints are the identity maps. 

Given a monoidal category structure  $(\otimes,\mathbf{1}_{\mathcal{C}},a,\iota)$ on ${\mathcal C}$, the opposite  monoidal category structure 
on $\mathcal{C}$ is given by the bifunctor $\otimes^{op}$ such that $A \otimes^{op}B:=B\otimes A$ for any pair of objects $A,B$ in ${\mathcal C}$,  the associator $a^{op}_{A,B,C}=a^{-1}_{C,B,A}$, together with $\mathbf{1}_{\mathcal{C}}$, and $\iota$.

A braiding on a monoidal structure $(\mathcal{C},\otimes,\mathbf{1}_{\mathcal{C}})$ is an isomorphism of functors $R:\otimes \rightarrow \otimes^{op}$, satisfying the following equations on triples of objects $A,B$ and $C$: 
\begin{align}\label{eq:braiding}
   a_{B,C,A}\circ R_{A,B\otimes C}\circ a_{A,B,C}&=(\id_B \otimes R_{A,C})\circ a_{B,A,C}(R_{A,B}\otimes \id_C),\\
  a^{-1}_{C,A,B}\circ  R_{A\otimes B,C}a^{-1}_{A,B,C}&=(R_{A,C}\otimes \id_B)\circ a^{-1}_{A,C,B}\circ(\id_A\otimes R_{B,C}).  
\end{align}
A monoidal category with a braiding is called a braided monoidal category. A braided monoidal category is called symmetric if $R_{A,B}\circ R_{B,A}= \id_{B\otimes A}$ for any pair of objects $A$ and $B$ in ${\mathcal C}$.

If ${\mathcal C}$ is strict, equation \eqref{eq:braiding} implies the Yang-Baxter equation on ${\mathcal C}^3$:
\begin{equation}\label{eq:braid}    (\id\otimes R) \circ (R\otimes \id) \circ (\id \otimes R) =
    (R\otimes \id) \circ (\id \otimes R) \circ (R\otimes \id) \end{equation}
    where $\id $ denotes the identity functor.

By Mac Lane's theorem, every monoidal category is monoidally equivalent to a strict one \cite[Theorem 2.8.5]{EGNO}, so we will treat all monoidal categories as strict ones. From now on, ${\mathcal S}$ will always denote a symmetric category, and $\Vu$ will always denote a braided monoidal category,  possibly with additional properties. 

\begin{example}\label{ex:top}
    {\rm The category of topological spaces  with tensor product, unit object, and braiding given respectively by direct product of spaces,  the one-point set and the flip of components is a symmetric category. We denote this braided monoidal category by $\mathrm{Top}^{\prod{}}$. An alternative symmetric category structure on the category of topological spaces is obtained by taking the disjoint union of spaces as tensor product  and the flip of components as braiding. We  denote this braided monoidal category by  $\mathrm{Top}^{\coprod{}}$. The category whose objects are groupoids is a symmetric category with tensor product given by direct product of categories and flip of components as braiding. We denote this braided monoidal category by $\mathrm{Gr}$.}
\end{example}

\subsection{Operads} In this section we introduce the  notions of an operad, an algebra over an operad and the  operad of  little $2$-cubes.

\begin{defn}
\label{def:operad}
    A symmetric operad with values in a symmetric  category $(\mathcal{S},\otimes,{\mathbf 1_{\mathcal{S}}},R)$ is the datum of 
    \begin{enumerate}
        \item[(i)] an object $\mathcal{O}(n)$ of $\mathcal{S}$, for every integer $n\geq 0$, with $\OO(0)$ being the unit object $\mathbf 1_{\mathcal{S}}$.
         \item[(ii)] a unit morphism $\eta: {\mathbf 1_{\mathcal{S}}} \to \mathcal{O}(1)$
        \item[(iii)] a morphism, called operadic composition, 
        $$
        \mathcal{O}(n)\otimes \bigotimes_{i=1}^n \mathcal{O}(m_i) \rightarrow \mathcal{O}(m),
        $$
         defined for any $n\in\NN_{\geq 1}$ and $m_1,\dots, m_n$ such that $m = \sum_{i=1}^n m_i$.
        \item [(iv)]  an action of the symmetric group $\Sn$ on $\mathcal{O}(n)$ for each $n\in\NN_{\geq1}$. 
    \end{enumerate}
   such that natural equivariance, unit, and associativity relations hold, see \cite[Fig. 1.1, 1.2, 1.3]{Fresse}. 
 \end{defn}

 Rather than saying that $\mathcal{O}$  is an operad with  values in $\mathcal{S}$ we will also  say that $\mathcal{O}$ is an operad in $\mathcal{S}$ or that it  is $\mathcal{S}$-valued. If ${\mathcal S}=\mathrm{Top}^{\prod{}}$ we will say that $\mathcal O$ is a topological operad.
   
\begin{defn}\cite[\S 1.1.2]{Fresse}
(\cite[Fig. 1.1]{Fresse})    Let $\mathcal{O}$ and $\mathcal{O}^{\prime}$ be operads in $\mathcal{S}$. A morphism of operads $\phi:\mathcal{O} \to \mathcal{O}^{\prime}$  is a sequence of $\Sn$-equivariant morphisms $\phi_n:\mathcal{O}(n) \to \mathcal{O}^{\prime}(n)$ in $\mathcal S$ which commute with the operadic composition and preserve the unit. A morphism of operads $\phi$ is an isomorphism if  $\phi_n$ is an isomorphism for each $n\in\NN$.
\end{defn}

\begin{remark}\label{remark:operad-functor} {\rm \cite[\S 1.1.4, Proposition ~3.1.1]{Fresse}
    \begin{enumerate}
        \item[(i)] 
        If the objects in ${\mathcal S}$ are sets, operadic composition  on elements  is denoted by
      \begin{align*}
        \eta = \phi \circ (\psi_1, \ldots, \psi_n),&&\phi\in\mathcal O(n),\,\psi_l\in \mathcal O(m_l), \ l=1,\,\ldots,\,n.
      \end{align*}

        \item[(ii)] Given a braided monoidal functor $F\colon \mathcal{S} \rightarrow \mathcal{S}'$ between symmetric categories and an  operad $\mathcal{O}$ in $\mathcal{S}$, composition with $F$ yields an operad $\mathcal{O}'=F\mathcal{O}$  in $\mathcal{S}'$.
    \end{enumerate}}
\end{remark}

\begin{defn}
\label{O-algebra}
Let $\mathcal{O}$ be an operad with values in  $\mathcal{S}$.
An algebra over $\mathcal{O}$, or an $\mathcal{O}$-algebra,  is an object $A$ of  $\mathcal{S}$ together with morphisms
        $$\lambda_n: \mathcal{O}(n)\otimes A^{\otimes n}\to A$$
for all $n\in\mathbb{N}$ that satisfy  equivariance, associativity relations and unit relations as in \cite[Figures 1.7, 1.8, 1.9]{Fresse}.
\end{defn}
  \subsubsection{Little $2$-cubes operad}\label{sec:little}

In the sequel we will mainly work with the operad of little $2$-cubes which we now recall.
From now on $I:=(0,1)$ and $U:=I\times I\subset\mathbb{R}^2\cong\mathbb{C}$ is  the open unit square.\label{UU}

\begin{defn}\label{unary}
 A {unary linear embedding} is a map 
    $$
    f:U\rightarrow U
    $$
    which is the restriction of an affine map $\Tilde{\phi}: \mathbb{C}\rightarrow \mathbb{C}$ of the form
    $z\mapsto az+b$, with $a\in \mathbb{R}_{>0}$ and $b\in \mathbb{C}$.
    We denote by $\id_U$ the {identity unary linear embedding}, that is, the linear embedding such that  $a=1$ and $b=0$.
    More generally, for $n\in\NN_{\geq 1}$ and $U^{\coprod n}:=\underbrace{U\amalg \cdots\amalg U}_{n \mbox{\tiny{ times }}}$ one defines an $n$-ary linear embedding to be a map
    $$
    \varphi:= \varphi_1 \amalg \cdots \amalg \varphi_n \colon U^{\coprod n}\rightarrow U
    $$
     where each $\varphi_j: U\rightarrow U$ for $j=1,\,\ldots,\, n$ is a unary linear embedding and 
     $\overline{\varphi_j(U)}\cap \overline{\varphi_l(U)}=\emptyset$ for $1\leq j\neq l\leq n$. 
\end{defn}
For $n\geq 1$ we denote by $E_2(n)$ the set of $n$-ary linear embeddings of $U$. For uniformity and later purposes we will also consider the set $E_2(0)$ consisting only of the trivial embedding $\varphi_0 \colon U^{\coprod 0}=\emptyset\to U$.

Each set $E_2(n)$ is endowed with the compact-open topology, cf.\cite[4.1.2]{Fresse}.
The image of a unary linear embedding is a cube $U'\subset U$ and the map from the set of unary linear embeddings to the set of cubes contained in $U$ is a bijection (in fact, a homeomorphism). For $n>1$, associating to an $n$-ary linear embedding  the $n$-tuple of images of all unary components  yields a surjection between the set of $n$-ary linear embeddings and the set of $n$-tuples of subcubes of $U$ with disjoint closures. Two $n$-ary embeddings have the same image if and only if they differ by a permutation of the components.

The natural left action of the symmetric group $\Sn$ on $U^{\coprod n}$ gives a right action on $E_2(n)$ namely, for $\sigma\in \Sn$ and  $\varphi\in E_2(n)$ we have \begin{equation}\label{eq:action-varphi}\varphi^\sigma:=\varphi\circ\sigma.\end{equation}

Given an $m_l$-ary linear embedding $\psi_l$ for every $l=1,\,\ldots,\,n$, and an $n$-ary linear embedding $\varphi$, their composition is the 
$m$-ary linear embedding $\eta$ with $m=\sum_{l=1}^nm_l$ defined by the diagram
\begin{equation}\label{eq:linemb}
\begin{tikzcd}[row sep=huge]
U^{\coprod n} \arrow[rr, "\varphi"]  & &U \\
\coprod_{l=1}^{n} U^{\coprod m_l} \arrow[u, "\coprod_{l=1}^n \psi_l"] \arrow[rr, "="] & &
U^{\coprod m} \arrow[u, "\eta"]
\end{tikzcd} 
\end{equation}

This composition of linear embeddings of $U$, together with the permutation action and the identity unary embedding make 
   $E_2 = (E_2(n))_{n\in \mathbb{N}} $ into 
   an  operad with values in 
     $\mathrm{Top}^{\prod{}}$
    called the {operad of little 2-cubes}.

\begin{remark}
  {\rm   Replacing linear embeddings of open cubes by embeddings of open disks gives the notion of operad of  little $2$-disks, which is equivalent to $E_2$.}
\end{remark}

Let $Y$ be an object in $\mathrm{Top}^{\prod{}}$. Its fundamental groupoid $\Pi_1(Y)$ is the groupoid whose objects are the points of $Y$ and its morphisms between two points are the homotopy classes of paths between them. 
It can be checked that the assignment $Y\mapsto \Pi_1(Y)$ induces a braided monoidal functor $\mathrm{Top}^{\prod{}}\to \mathrm{Gr}$,  see \cite[Proposition 5.3.2]{Fresse}. 
Applying Remark \ref{remark:operad-functor} to $F=\Pi_1$ and $E_2$ we obtain \label{p:groupoid}
the $\mathrm{Gr}$-valued operad $\Pi_1 E_2$, called the fundamental groupoid of the operad of little 2-cubes $E_2$.
Note that $\Pi_1 E_2(n)$ is now an object in the category of groupoids for any $n\in \NN$. The objects of $\Pi_1 E_2(n)$  are  $n$-ary linear embeddings and the morphisms are homotopy classes of paths between pairs of $n$-ary embeddings. 
In this case the operadic composition is a functor, i.e., a 
morphism in the category of groupoids. The (right) $\Sn$-action on objects is as in \eqref{eq:action-varphi}.  On morphisms, it is given as follows. For $\varphi,\, \varphi'\in E_2(n)$ and   a path  $\gamma\colon [0,1]\to E_2(n)$ from $\varphi$ to $\varphi'$, the action of $\sigma\in  \Sn$  on $\gamma$ gives the induced homotopy class of paths $\gamma^{\sigma}= \gamma\circ\sigma$ from  $\varphi^{\sigma}$ to ${\varphi'}^{\sigma}$. 

\begin{remark}\label{piE1}{\rm \begin{enumerate}
\item The space $E_2(1)$ is contractible, hence, there is a unique homotopy class of paths  between any pair of elements in $E_2(1)$.
In particular, there is a unique morphism in $\Pi_1 E_2(1)$ between any unary linear embedding and $\id_U$.
\item Any path $\gamma\colon [0,1]\to E_2(n)$ from $\varphi$ to $\varphi'$ is homotopy equivalent to a composition of paths of the form $\vartheta\circ(\gamma_1,\ldots,\gamma_n)$ where $\theta\in E_2(n)$ and $\gamma_j\colon [0,1]\to E_2(1)$ for $j=1,\ldots,\,m$ are paths between $1$-ary embeddings.
\end{enumerate}}
\end{remark}

\begin{defn}\label{def:linearemb}
    A linear embedding $\varphi\in E_2(n)$ is said to be {vertically disjoint} if the projection to the imaginary axis of the image of $\varphi$ is the union of exactly $n$ disjoint open intervals.
    \end{defn}
   If $\varphi$ is vertically disjoint, then $\varphi^\sigma$ is again vertically disjoint for any $\sigma\in \Sn$. 

\medskip
   
    We denote by $E_2^v (n)$ the locus of vertically disjoint linear embeddings in $E_2(n)$ and by  $(\Pi_1 E_2)^v(n)$  the full subcategory of $\Pi_1 E_2(n)$ whose objects are those in $E_2^v (n)$. These groupoids assemble into an operad in $\mathrm{Gr}$, denoted  by $(\Pi_1 E_2)^v$. 
    By definition, morphisms are homotopy classes of paths in the topological space $E_2(n)$ with endpoints in $E^v_2(n)$. 
     The symmetric group action on $\Pi_1 E_2(n)$ induces an action on $(\Pi_1 E_2)^v(n)$ for any $n$. Operadic composition in  $\Pi_1 E_2(n)$ yields operadic composition in $(\Pi_1 E_2)^v$ at the level of objects because it preserves verticality, and at the level of morphisms because  $(\Pi_1 E_2)^v(n)$ is a full subcategory. 

     \medskip

\begin{remark}\begin{enumerate}
\item {\rm The inclusion functors $(\Pi_1 E_2)^v(n)\to \Pi_1 E_2(n)$ for any $n$ are in fact equivalences, so $(\Pi_1 E_2)^v$ is equivalent to $\Pi_1 E_2$. These two operads are also equivalent to the colored braid operad $CoB$, \cite[Theorem 5.3.4]{Fresse}.
  The topological operad $E_2^v = (E_2^v(n))_{n\in \mathbb{N}}$ is a suboperad of $E_2$ which is equivalent to the operad $E_1$ of intervals of the real line.} 

 \item {\rm Given $\varphi\in E_2^v (n)$, for any $l\in\{1,\,\ldots,\,n-1\}$, the $l$-th elementary braiding is the morphism in $(\Pi_1 E_2)^v(n)$ from $\varphi$ to $\varphi^{(l, l+1)}$ obtained by rotating the positions of the images of the $l$ and $(l+1)$-th cubes counter-clockwise.  Any morphism in $(\Pi_1 E_2)^v(n)$ is obtained by composing elementary braidings. }
 \end{enumerate}
\end{remark}

\subsection{ Deligne's interpretation of braided monoidal categories}

In the sequel we use the  following characterization, due to Deligne, of a braided monoidal  category in terms of the operad $\Pi_1 E_2$. We recall it, using the formulation of \cite[Proposition 3.2.1]{KS3}, (cf. also \cite [\S 6.2.7]{Fresse}).

\begin{thm}\label{thm:del}(Deligne) 
Let $\mathcal C$ be a category. A (strict) braided monoidal category structure on $\mathcal{C}$ is equivalent to a structure of $\Pi_1 E_2$-algebra on $\mathcal{C}$ in the category of  categories.
\end{thm}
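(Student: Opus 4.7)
The plan is to construct mutually inverse assignments between strict braided monoidal structures on $\mathcal{C}$ and $\Pi_1 E_2$-algebra structures on $\mathcal{C}$ in the category of categories (with tensor product given by Cartesian product of categories), exploiting the equivalence $(\Pi_1 E_2)^v \simeq \Pi_1 E_2$ to reduce all constructions to vertically disjoint embeddings, whose components carry a canonical total order by vertical position.

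For the forward direction, starting from a strict braided category $(\mathcal{C},\otimes,\mathbf{1}_{\mathcal{C}},R)$, I would define
$$
\lambda_n(\varphi;X_1,\ldots,X_n):=X_{\sigma_\varphi(1)}\otimes\cdots\otimes X_{\sigma_\varphi(n)}
$$
on objects of $(\Pi_1 E_2)^v(n)\times\mathcal{C}^n$, where $\sigma_\varphi\in\Sn$ is the permutation reading the components of $\varphi$ from bottom to top; on morphisms I would send the $l$-th elementary braiding at $\varphi$ to $\id\otimes\cdots\otimes R_{X_{\sigma_\varphi(l)},X_{\sigma_\varphi(l+1)}}\otimes\cdots\otimes\id$. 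Since every morphism of $(\Pi_1 E_2)^v(n)$ decomposes as a composite of elementary braidings, well-definedness is precisely the Yang-Baxter identity \eqref{eq:braid}. The unit operation $\lambda_0$ picks out $\mathbf{1}_{\mathcal{C}}$, while $\lambda_1$ is forced to be the identity functor in view of Remark \ref{piE1}. Operadic associativity and $\Sn$-equivariance then follow from strict associativity of $\otimes$ and the definition of $\sigma_\varphi$.

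For the reverse direction, given a $\Pi_1 E_2$-algebra $(\lambda_n)_{n\geq 0}$ on $\mathcal{C}$, I would pick a distinguished vertically disjoint $\varphi^{(2)}\in E_2^v(2)$ and define $X\otimes Y:=\lambda_2(\varphi^{(2)};X,Y)$, $\mathbf{1}_{\mathcal{C}}:=\lambda_0(\varphi_0)$, and $R_{X,Y}$ as $\lambda_2$ applied to the elementary braiding from $\varphi^{(2)}$ to $(\varphi^{(2)})^{(12)}$ evaluated on $(X,Y)$. Operadic associativity, applied to canonical three-cube embeddings, delivers the strict associativity of $\otimes$. The two hexagon equations \eqref{eq:braiding} translate into the compatibility of the elementary braiding with operadic composition of a $2$-ary embedding with a $1$-ary and a $2$-ary embedding (and its mirror); the Yang-Baxter relation \eqref{eq:braid} encodes the braid relation in the fundamental groupoid of the configuration space of three cubes in $U$, which computes the three-strand pure braid group, and, together with $\Sn$-equivariance, the full three-strand braid group.

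The main obstacle is the identification between morphisms in $(\Pi_1 E_2)^v(n)$ and words in elementary braidings modulo precisely the braid relations. Establishing this rigorously amounts to computing the fundamental group of $E_2^v(n)$ together with its $\Sn$-action and recognising the resulting groupoid as the colored braid operad $CoB$, a computation carried out in \cite{Fresse}. Once this identification is in hand, checking that the two assignments above are mutually inverse up to natural isomorphism is routine categorical bookkeeping, so the essential content of the theorem lies in the braid-theoretic calculation rather than in the categorical translation.
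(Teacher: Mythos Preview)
Your proposal is correct and follows essentially the same route as the paper. The paper treats this as a known result (citing Deligne via \cite{KS3} and \cite{Fresse}) and only sketches the forward direction: it sets $\otimes_{\varphi_0}$ to pick out $\mathbf{1}_{\mathcal C}$, $\otimes_{\id_U}=\id_{\mathcal C}$, identifies connected components of $E_2^v(n)$ with permutations, recognises morphisms as colored braids, and then extends along the equivalence $(\Pi_1 E_2)^v\simeq \Pi_1 E_2$---precisely your strategy, with your reverse direction being the natural complement the paper omits.
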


In order to fix notation we spell out explicitly what a $\Pi_1 E_2$-algebra structure on $\mathcal{C}$ is. The functors  
   $\Pi_1E_2(n)\otimes \mathcal{C}^n\to \mathcal{C}$ for each $n$ at the level of objects and morphisms translate, respectively, into the datum of: 
   \begin{itemize}
       \item  a functor 
       \(
       \otimes_{\varphi}: \mathcal{C}^n \rightarrow \mathcal{C}
       \) for every $\varphi\in E_2(n)$,
       \item  an isomorphism of functors
       \(
     R_\gamma\colon  \otimes_{\varphi} \rightarrow \otimes_{\varphi'}
       \) for every homotopy class of paths $\gamma$ in $E_2(n)$ from $\varphi$ to $\varphi'$, satisfying:
   \end{itemize}
    
   \begin{itemize}
   \item[$\ast$] compatibility of $R$ with respect to composition of paths in $E_2(n)$;
       \item[$\ast$] compatibility with operadic composition, that is:
       for the linear embeddings $\varphi\in E_2(n)$ and $\psi_1\in E_2(m_1),\,\ldots,\,\psi_n\in E_2(m_n)$ with $m = m_1 + \ldots +m_n$, 
the following equality holds:
\begin{equation}
\otimes_{\varphi\circ (\psi_1, \ldots, \psi_n)} = \otimes_\varphi \circ (\otimes_{\psi_1}, \ldots, \otimes_{\psi_n})
\end{equation}
and similarly for the isomorphisms of functors $R$.
        \item[$\ast$] equivariance with respect to $\Sn$-action, that is: if $\gamma$ is a path from $\varphi$ to $\psi$ and $\sigma\in \Sn$, then $\gamma^\sigma$ is a path from $\varphi^\sigma$ to $\psi^\sigma$; the functors $\otimes_{\varphi^\sigma}$ and $\otimes_{\psi^\sigma}$ are identified, respectively, with $\otimes_{\varphi}\circ\sigma$ and $\otimes_{\psi}\circ\sigma$, and, through this identification, for any $n$-tuple of objects $V_1,\,\ldots,V_n$ in $\Vu$ the isomorphisms 
\begin{align}
R_{\gamma;\sigma(V_1,\,\ldots,V_n)}&\colon \otimes_{\varphi}\sigma(V_1,\,\ldots,\,V_n)\to \otimes_{\psi}\sigma(V_1,\,\ldots,\,V_n)
\end{align}
and 
\begin{align}
R_{\gamma^\sigma;(V_1,\,\ldots,V_n)}&\colon \otimes_{\varphi^\sigma}(V_1,\,\ldots,\,V_n)\to \otimes_{\psi^\sigma}(V_1,\,\ldots,\,V_n)
\end{align}
are identified.
   \end{itemize}
For the reader's convenience we now give a sketch of how to construct such a datum from a braided monoidal category $(\mathcal{C}, \otimes, \mathbf 1_{\mathcal{C}}, R)$.  The category $\mathcal{C}^0$ is a terminal object in the category of categories, thus, a functor $\mathcal{C}^0 \rightarrow \mathcal{C}$ corresponds exactly to giving an object of $\mathcal{C}$, so $\otimes_{\varphi_0}$ is the constant functor with image ${\mathbf 1}_{\mathcal C}$. We also set $\otimes_{\id_U} := \id_{\mathcal{C}}$ and since for any $\varphi\in E_2(1)$ there is a unique homotopy class of paths from $\varphi$ to $\id_U$, the functor $\otimes_{\varphi}$ is canonically isomorphic to $\id_{\mathcal C}$. Let now $n\geq 2$. The connected components of $E_2^v(n)$ are contractible and can be naturally identified with the permutations of the integers $1, 2, \ldots, n$. Furthermore, the set of homotopy classes of paths in $E_2(n)$ joining a point in a connected component of $E^v_2(n)$ to a point in another connected component of $E^v_2(n)$ can be naturally identified with an appropriate set of colored braids.
    Observe now that, since $\otimes$ satisfies the braid equation, it is readily seen that the structure
    $(\mathcal{C}, \otimes, \mathbf 1_{\mathcal{C}}, R)$ is equivalent to the structure of a $CoB$-algebra on $\mathcal{C}$. This implies that one can define $\otimes_\varphi$ for $\varphi\in E^v_2(n)$ in terms of tensor product of $n$ objects reordered appropriately. Thanks to the equivalence of $(\Pi_1 E_2)^v$ and $\Pi_1 E_2$ we extend the $(\Pi_1 E_2)^v$-algebra structure on $\mathcal{C}$ to a $\Pi_1 E_2$-algebra structure on $\mathcal{C}$.

\subsection{Tensor products and braidings for perverse sheaves}

From now on the braided  monoidal category $\mathcal{V}$ with braiding $R$ will always be abelian and $k$-linear and the tensor product $\otimes$ will always be exact. 

\medskip

We denote by $\mathcal{S}trat$ \label{p:strat} the category of stratified spaces, we refer to \cite[1.1, 1.2]{GMcP} for unexplained terminology. Objects in $\mathcal{S}trat$ are given by pairs $(Y,S)$ where $Y$ is a topological space and $S$ is a stratification of $Y$ with even (real) dimensional strata.  The morphisms are given by stratified maps.  For a stratified space  $(Y,S)$  we consider the constructible bounded derived category $D^b _{\! c} (Y, S, \mathcal{V})$ of complexes of sheaves on $Y$ with values in $\mathcal{V}$ and cohomology locally constant along $S$,  \cite[\S 1.4 A]{KS3}. 
It has a formalism of the six operations: $\mathcal{H}om$, $\otimes$, $f_\ast$, $f_{!}$, $f^\ast$, $f^!$, see \cite{dCM}. 
We consider the abelian subcategory $\mathcal{PS}(Y,S, \Vu)$ of
${D^b _c}(Y,S, \Vu)$  consisting of  (middle) perverse sheaves.  Perverse sheaves are well-behaved under duality. We will make use of the  perverse truncation functors ${}^{p}\tau_{\geq 0}$, ${}^p\tau_{\leq0}$ and ${}^p{(\ )} ={}^{p}\tau_{\leq 0}{}^{p}\tau_{\geq 0}(\ )$ on the category ${D^b _c}(Y, S, \Vu)$. For the case of coefficients over a field,  see \cite{BBD}, \cite[\S 5.8]{dCM}. 

\medskip

    An indexed countable direct sum of objects
    $Y =  \coprod_{n\in \mathbb{N}} Y(n)$ in a category $\mathcal C$ is said to be graded (more precisely $\mathbb{N}$-graded). The object $Y(n)$ indexed by $n$ is called the graded component of degree $n$. If $\mathcal C=\mathcal{S}trat$, we obtain the category of graded stratified spaces,  whose morphisms are graded of degree zero, i.e., map graded components to graded components corresponding to the same index. This category is symmetric  if endowed with the operation of direct product of spaces, the product stratification, and the usual flip.
Indeed, if $Y = \coprod_{p\in\NN} Y(p)$ and 
$Y' = \coprod_{q\in\NN} Y'(q)$, then
\begin{equation}\label{eq:grading=product}
Y\times Y' = 
\coprod_{n\in\NN} \left(\coprod_{p+q=n} 
Y(p) \times Y'(q)\right),
\end{equation}
is a grading of $Y\times Y'$, compatible with the product stratification, and it is straightforward to verify the conditions of a symmetric category. 
Given a graded space $Y = \coprod_{n\in\NN} Y(n)$, for any $d\in \NN$ the subspace $Y({\leq d}) := \coprod_{n\leq d}Y(n)$ is open in $Y$.

\subsubsection{The outer tensor product $\boxtimes$} \label{tensprod}
For any stratified space $(Y, S)$, the category of sheaves on $Y$ with values in $\mathcal{V}$ is endowed with a braided monoidal structure, which readily extends to complexes of sheaves on $Y$ with values in $\mathcal{V}$, provided $R$ is modified by a sign according to the Koszul rule. By bi-exactness of $\otimes$, the structure passes to  $D^b_{\! c} (Y, S, \mathcal{V})$, making $D^b_{\! c} (Y, S, \mathcal{V})$ a braided monoidal category \cite[\S 3.2 B]{KS3}. By a slight abuse of notation, we denote the induced tensor product by $\otimes$. By Deligne's philosophy, there is a functor $\otimes_\varphi$ for any $\varphi\in E_2(n)$. For any stratified morphism $f\colon (Y, S)\to (Y', S')$ the pull-back functor $f^*\colon D^b_{\! c} (Y', S', \mathcal{V})\to D^b_{\! c} (Y, S, \mathcal{V})$ is braided monoidal, where the coherence maps are the natural identifications.

A drawback in this construction is that the subcategory  of perverse sheaves is not closed with respect to $\otimes$.
On the other hand,  it is suitable for defining an outer tensor product $\boxtimes$, whose construction we recall from {\it loc. cit.}.

\medskip

For 
$\mathcal{F}\in 
{D}^b_{\! c} (Y, S, \mathcal{V})$
and 
$\mathcal{G}\in 
{D}^b_{\! c} (Y', S', \mathcal{V})$, 
 the object $\mathcal{F} \boxtimes \mathcal{G}$ of ${D}^b_{\! c} (Y\times Y', S\times S', \mathcal{V})$ is defined as:
\begin{equation}\label{eq:outer}
\mathcal{F} \boxtimes \mathcal{G} := pr_Y^\ast \mathcal{F} \otimes pr_{Y'}^\ast \mathcal{G},
\end{equation}
where $pr_Y, pr_{Y'}$ are the canonical projections from  $Y\times Y'$ to $Y$ and $Y'$, respectively, and the tensor product $\otimes$ is induced by the tensor product of complexes of sheaves with values in $\Vu$. By \cite[\S 3.2 B] {KS3} the external product of perverse sheaves is again perverse.

\medskip

For any $\varphi\in E_2(n)$ and any $n$-tuple of stratified spaces $(Y_1,S_1)$,$\ldots$, $(Y_n,S_n)$, with natural projections $p_i\colon Y_1\times \cdots\times Y_n\to Y_i$ for $i=1,\,\ldots,\,n$, Deligne philosphy applied to $D^b_c(\prod _{i=1}^nY_i,\prod_{i=1}^n S_i,\Vu)$ gives then a functor \label{boxphi}
\begin{equation*}
 \boxtimes_{\varphi}:=  \otimes_{\varphi}(p_1^*,\ldots,\,p_n^*)= \colon \prod_{i=1}^ nD^b_c(Y_i,S_i,\Vu)\to D^b_c(\prod _{i=1}^nY_i,\prod_{i=1}^n S_i,\Vu)
\end{equation*}
for any $\varphi\in E_2(n)$. These functors are compatible with operadic composition in $E_2$ and induce functors  \begin{equation}\label{eq:boxphi}\boxtimes_{\varphi}\colon \prod_{i=1}^n{\PS}(Y_i,S_i, \mathcal{V})\to  \PS(\prod _{i=1}^nY_i,\prod_{i=1}^n S_i,\Vu).\end{equation}

Furthermore, for every path $\gamma$  from $\varphi$ to $\varphi'$  in $E_2(n)$ there is an isomorphism of functors
$\underline{R}_\gamma: \boxtimes_\varphi 
\rightarrow
\boxtimes_{\varphi'}$  given on any $n$-tuple of objects $\GG_i\in D^b _{\! c} (Y_i, \mathcal{V})$ for $i=1,\,\ldots,\,n$,  by
\begin{equation}\label{eq:defR}
\underline{R}_{\gamma, (\GG_1,\,\ldots, \GG_n)}:=R_{\gamma;(p_1^*\GG_1,\ldots,p_n^*\GG_n)}
\end{equation}
where $R_\gamma$ is the braiding in 
$D^b _{\! c} (Y_1\times \cdots\times Y_n,  S_1\times\cdots\times S_n, \mathcal{V})$. 
The isomorphisms $\underline{R}_\gamma$ depend only on the homotopy class of the path $\gamma$ and for any $\sigma\in \Sn$ and any $n$-tuple of objects $\GG_1,\,\ldots,\,\GG_n$ there holds 
\begin{equation}\label{eq:equivarianceR}
\underline{R}_{\gamma^{\sigma}; (\GG_1,\ldots,\,\GG_n)}=\underline{R}_{\gamma;\sigma(\GG_1,\,\ldots,\,\GG_n)},
\end{equation} by Theorem~\ref{thm:del} applied to the braided monoidal category $D^b _{\! c} (Y_1\times \cdots\times Y_n, S_1\times\cdots\times S_n, \mathcal{V})$.

\begin{lem}\label{lem:equivarianceRbox}Let $\varphi$ and $\psi\in E_2(n)$ and let $\gamma$ be a path from $\varphi$ to $\psi$. Let $(Y,S)$ be a stratified space. For $\sigma\in\Sn$, let $\sigma^*\colon D^b_{\! c}(Y^n, S^n,\Vu)\to D^b_{\! c}(Y^n, S^n,\Vu) $ be the pull-back functor of the permutation morphism $\sigma\colon Y^n\to Y^n$. Then, for any $n$-tuple of objects $\FF_i\in D^b_{\! c}(Y,S,\Vu)$ for $i=1,
\ldots, n$ there holds
\begin{equation}
    \sigma^*\underline{R}_{\gamma;(\FF_1,\ldots,\,\FF_n)}=\underline{R}_{\gamma^\sigma;(\FF_{\sigma^{-1}(1)},\ldots,\,\FF_{\sigma^{-1}(n)})}.
\end{equation}
    \end{lem}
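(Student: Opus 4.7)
The approach is to unfold the definition of $\underline{R}$ via \eqref{eq:defR}, exploit that the pull-back $\sigma^{*}$ along the stratified map $\sigma\colon Y^n\to Y^n$ is braided monoidal---hence commutes with Deligne's full $\Pi_1 E_2$-algebra structure on $D^b_{\! c}(Y^n,S^n,\Vu)$---track the effect on the projections via $p_i\circ\sigma=p_{\sigma^{-1}(i)}$, and finally invoke the equivariance relation \eqref{eq:equivarianceR} to move the permutation from the tuple of arguments onto the path $\gamma$.

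I would first rewrite the left-hand side, via \eqref{eq:defR}, as $\sigma^{*} R_{\gamma;(p_1^{*}\FF_1,\ldots,p_n^{*}\FF_n)}$. Since $\sigma$ is stratified, $\sigma^{*}$ is a braided monoidal endofunctor of $D^b_{\! c}(Y^n,S^n,\Vu)$, so by Theorem~\ref{thm:del} it preserves the entire Deligne structure: for every path $\gamma$ from $\varphi$ to $\psi$ in $E_2(n)$ and every $n$-tuple $(G_1,\ldots,G_n)$ of objects, $\sigma^{*} R_{\gamma;(G_1,\ldots,G_n)} = R_{\gamma;(\sigma^{*}G_1,\ldots,\sigma^{*}G_n)}$. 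Taking $G_i=p_i^{*}\FF_i$ and using $\sigma^{*}\,p_i^{*} = (p_i\circ\sigma)^{*} = p_{\sigma^{-1}(i)}^{*}$ transforms the expression into $R_{\gamma;(p_{\sigma^{-1}(1)}^{*}\FF_1,\ldots,p_{\sigma^{-1}(n)}^{*}\FF_n)}$.

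Next, I would recognise the permuted tuple $(p_{\sigma^{-1}(i)}^{*}\FF_i)_{i=1}^n$ as the result of the $\Sn$-action of $\sigma$ (on $n$-tuples of objects, as appearing in the Deligne structure) applied to the tuple $(p_1^{*}\FF_{\sigma^{-1}(1)},\ldots,p_n^{*}\FF_{\sigma^{-1}(n)})$, and then apply the equivariance identity \eqref{eq:equivarianceR} to rewrite $R_{\gamma;\sigma(\cdots)}$ as $R_{\gamma^\sigma;(p_1^{*}\FF_{\sigma^{-1}(1)},\ldots,p_n^{*}\FF_{\sigma^{-1}(n)})}$; a final application of \eqref{eq:defR} then identifies this with $\underline{R}_{\gamma^\sigma;(\FF_{\sigma^{-1}(1)},\ldots,\FF_{\sigma^{-1}(n)})}$, as required. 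The main obstacle is the bookkeeping: two distinct $\Sn$-actions appear---one on $Y^n$ by coordinate permutation, and one on $n$-tuples of objects via Deligne equivariance---and the argument pivots on matching them, which is precisely what the identity $p_i\circ\sigma=p_{\sigma^{-1}(i)}$ encodes.
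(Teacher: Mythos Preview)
Your approach is exactly the paper's: unfold $\underline{R}$ via \eqref{eq:defR}, use that $\sigma^{*}$ is braided monoidal to pull it inside $R_\gamma$, rewrite the projections, reinterpret the resulting tuple as $\sigma$ acting on $(p_j^{*}\FF_{\sigma^{-1}(j)})_j$, and apply \eqref{eq:equivarianceR}. One caveat on bookkeeping: with the paper's convention for the permutation action on $Y^n$ (cf.\ the proof of Lemma~\ref{lem:tensoreq}), one has $p_i\circ\sigma=p_{\sigma(i)}$ rather than $p_{\sigma^{-1}(i)}$, so you should double-check your index in that step --- the structure of the argument is unaffected, but the formula as written will not match unless the two $\Sn$-actions are aligned consistently.
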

\begin{proof}
Applying \eqref{eq:defR} gives
\(\sigma^*\underline{R}_{\gamma;(\FF_1,\ldots,\,\FF_n)}=\sigma^*R_{\gamma;(p_1^*\FF_1,\ldots,\,p_n^*\FF_n)}\).
Since $\sigma^*$ is the pull-back along a morphism, it is  braided monoidal, hence up to natural equivalences
\begin{align*}
\sigma^*R_{\gamma;(p_1^*\FF_1,\ldots,\,p_n^*\FF_n)}&=R_{\gamma;(\sigma^*p_1^*\FF_1,\ldots,\,\sigma^*p_n^*\FF_n)}=R_{\gamma;((p_1\sigma)^*\FF_1,\ldots,\,(p_n\sigma)^*\FF_n)}=R_{\gamma;(p_{\sigma(1)}^*\FF_1,\ldots,\,p_{\sigma(n)}^*\FF_n)}\\
&=R_{\gamma;\sigma(p_1^*\FF_{\sigma^{-1}(1)},\ldots,\,p_n^*\FF_{\sigma^{-1}(n)})}=R_{\gamma^\sigma;(p_1^*\FF_{\sigma^{-1}(1)},\ldots,\,p_n^*\FF_{\sigma^{-1}(n)})}\\
&=\underline{R}_{\gamma^\sigma;(\FF_{\sigma^{-1}(1)},\ldots,\,\FF_{\sigma^{-1}(n)}).}
\end{align*}
where for the second last equivalence we have used \eqref{eq:equivarianceR}.
\end{proof}

\begin{remark}\label{rem:D-monoidal}{\rm Combining the functors $\boxtimes$ when $(Y,S)$ and $(Y',S')$ vary, gives a bi-functor on the category ${D}^b _{\! c} (\phantom{a}, \mathcal{V})$ 
whose objects are pairs $((Y,S), C)$ where $(Y,S)$ is a stratified space and $C$ is an object in  $D^b _{\! c} (Y, S, \mathcal{V})$ and whose morphisms are pairs $(f,g)$ where $f$ is a morphism of stratified spaces and $g$ is an $f$-morphism. The category is
fibered over $\mathcal{S}trat$ and it can be shown that the tensor product and associativity constraints are compatible with the fiber structure, making  $D^b _{\! c} (\phantom{a}, \mathcal{V})$ into a monoidal fibered category, see \cite{ayoub,ter} for precise definitions. Similarly, one defines the fibered subcategory $\mathcal{P}erv(\phantom{a}, \mathcal{V})$ of perverse sheaves, which is a  monoidal fibered subcategory of $D^b _{\! c} (\phantom{a}, \mathcal{V})$. Dropping condition \cite[Definition 2.6, (cITS-1)]{ter}, one obtains the notion of braided monoidal fibered category. 

For stratified spaces $(Y,S)$ and $(Y',S')$, let $pr_Y, pr_{Y'}, pr'_Y$, and  $pr'_{Y'}$ be the canonical projections from  $Y\times Y'$ to $Y$ and $Y'$, and from $Y'\times Y$ to $Y$ and $Y'$, respectively. For
$\mathcal{F}\in 
{D}^b_{\! c} (Y, S, \mathcal{V})$
and 
$\mathcal{G}\in 
{D}^b_{\! c} (Y', S', \mathcal{V})$, the natural intertwining map 
\begin{equation}\label{eq:braiding-gira}
    \mathcal{F} \boxtimes \mathcal{G}= pr_Y^\ast \mathcal{F} \otimes pr_{Y'}^\ast \mathcal{G}
 \rightarrow
 (pr'_{Y'})^\ast \mathcal{G} \otimes (pr'_Y)^\ast \mathcal{F}=\mathcal{G} \boxtimes \mathcal{F} 
\end{equation}
is an isomorphism of complexes $D^b_{\! c} (Y\times Y', S\times S', \mathcal{V})\to D^b_{\! c} (Y'\times Y, S'\times S, \mathcal{V})$ 
in  $D^b _{\! c} (\phantom{a}, \mathcal{V})$ 
over the symmetry
\[
\sigma\colon Y\times Y' \rightarrow Y' \times Y \ , \ 
(y,y')\mapsto (y',y).
\]

The natural transformations \eqref{eq:braiding-gira} combine to turn $D^b _{\! c} (\phantom{a}, \mathcal{V})$ and 
$\mathcal{P}erv(\phantom{a}, \mathcal{V})$ into braided monoidal fibered categories. The functors $\boxtimes_{\varphi}$ could be seen as an application of Deligne philosophy to the braided monoidal structures in these categories.} 
\end{remark}

Let $(Y,S)$ be a stratified space. For $n\in\NN_{\geq 2}$, any $\sigma \in\Sn$ acts on $(Y^n,S^n)$ so the pull-back functor yields functors $\sigma^*\colon
\mathcal{PS}(Y^n,S^n,\Vu)\to \mathcal{PS}(Y^n,S^n,\Vu)$.  We spell out the compatibility of the functors $\boxtimes_{\varphi}$ with respect to the permutation actions on $E_2(n)$ and  $\mathcal{PS}(Y^n,S^n,\Vu)$.

\begin{lem}\label{lem:tensoreq}
 Let $(Y,S)$ be a stratified space, $n\in\NN_{\geq 2}$, $\varphi\in E_2(n)$, and $\sigma\in \Sn$. Consider $\boxtimes_{\varphi}: \mathcal{PS}(Y,S,\Vu)^n \rightarrow \mathcal{PS}(Y^n,S^n,\Vu)$. Then the following equivalence of functors holds
 \begin{equation}\label{eq:boxtimes-sigma}\sigma^*\circ \boxtimes_{\varphi}=\boxtimes_{\varphi^{\sigma}}\circ \sigma^{-1}.\end{equation} In particular, if $\FF \in \mathcal{PS}(Y,S,\Vu)$, 
then \begin{equation}\label{eq:identification}\boxtimes_{\varphi^\sigma}(\FF,\ldots,
\,\FF)=\sigma^*(\boxtimes_{\varphi}(\FF,\,\ldots,\,\FF)).\end{equation}
\end{lem}
\begin{proof}
 Let $\mathcal{F}_1, \dots, \mathcal{F}_n \in \mathcal{PS}(Y,S,\Vu)$. Let $p_l\colon Y^n\to Y$ be the
  $l$-th canonical projection and let  $\otimes_{\varphi}: \mathcal{PS}(Y^n, S^n,\mathcal{V})^n \to \mathcal{PS}(Y^n,S^n, \mathcal{V})$ be as in \eqref{eq:boxphi}. Then 
    \begin{equation}
    \begin{split}
    \sigma^*\circ \boxtimes_{\varphi}(\mathcal{F}_1,\dots, \mathcal{F}_n)&=
    \sigma^*\left(\otimes_{\varphi}(p_1^*\mathcal{F}_1, \dots, p_n^*\mathcal{F}_n)\right)=
    \otimes_{\varphi}(\sigma^*p_1^*\mathcal{F}_1, \dots, \sigma^*p^*_n\mathcal{F}_n)\\
    &= \otimes_{\varphi}(p_{\sigma(1)}^*\mathcal{F}_1, \dots, p^*_{\sigma(n)}\mathcal{F}_n)= \otimes_{\varphi^{\sigma}}(p_1^*\mathcal{F}_{\sigma^{-1}(1)}, \dots, p^*_n\mathcal{F}_{\sigma^{-1}(n)})\\
    &=\boxtimes_{\varphi^{\sigma}}\circ \sigma^{-1}(\mathcal{F}_1, \dots, \mathcal{F}_n)
         \end{split}
    \end{equation}
 where the second equality follows from functoriality of $\sigma^*$ and in the last equality we have used that $\otimes_{\varphi^{\sigma}}=\otimes_{\varphi}\circ \sigma$. A similar argument holds for morphisms. 
\end{proof}

\subsubsection{Some equivalences} \label{mndrmy}
We recall a few basic equivalences and natural isomorphisms for $D^b_c(Y,S,\Vu)$ that will be needed in the sequel for the definition of a monodromy functor.

\medskip

For $a,\,b\in {\mathbb R}$ and $l\in \NN_{\geq 1}$, we consider $(a,b)$ as a  stratified space with trivial stratification, that we denote by $S_0$. For $u\in (a,b)^l$, and $Y$ a topological space, we denote by  \[i_u : Y \rightarrow Y\times (a,b)^l\] the inclusion $y\mapsto (y,u)$.
If $(Y,S)$ is stratified, then $Y\times (a,b)^l$ is equipped with the product stratification $S\times S_0^l$.

\begin{lem}\label{lem:equiv}
Let $(Y,S)$ be a stratified space and let 
\[
\pi: Y\times (a,b)^l \rightarrow Y
\]
be the canonical projection. 
Then, for any $u\in (a,b)^l$  the functors $\pi^\ast$ and $i_u^\ast$ establish mutually quasi-inverse equivalences between 
$D^b_{\! c} (Y, S, \mathcal{V})$ and $
D^b_{\! c} (Y\times (a,b)^l, S\times S_0^l, \mathcal{V})$.
\end{lem}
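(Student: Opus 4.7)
The plan is to check each composition separately. The composition $i_u^*\circ \pi^*$ is automatic: as stratified maps $\pi\circ i_u=\id_Y$, hence $i_u^*\pi^*=(\pi\circ i_u)^*=\id$ on $D^b_{\! c}(Y,S,\Vu)$.

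The substance lies in establishing $\pi^* i_u^*\cong \id$ on $D^b_{\! c}(Y\times (a,b)^k, S\times S_0^k,\Vu)$. I would first produce a canonical comparison morphism $\pi^* i_u^*\FF\to \FF$ for $\FF$ in the above category. One clean construction combines the natural restriction $R\pi_*\FF\to i_u^*\FF$ (arising because $i_u$ is a section of $\pi$, via base change along $\{u\}\hookrightarrow (a,b)^k$) with the counit $\pi^* R\pi_*\FF\to \FF$ of the adjunction $(\pi^*, R\pi_*)$. It then remains to show this composition is an isomorphism.

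For this I would perform devissage with respect to the standard $t$-structure on $D^b_{\! c}$: using the long exact cohomology sequences and the fact that $\pi^*$ and $i_u^*$ are both $t$-exact for the standard $t$-structure, it suffices to verify the statement when $\FF$ is a single constructible sheaf. Since the strata of $S\times S_0^k$ are precisely the products $\sigma\times (a,b)^k$ for $\sigma$ a stratum of $S$, the restriction of $\FF$ to any such stratum is a locally constant sheaf on a space whose second factor $(a,b)^k$ is simply connected. The standard fact that locally constant sheaves on $X\times C$ with $C$ simply connected are pulled back from $X$ then identifies $\FF|_{\sigma\times (a,b)^k}$ with $\pi_\sigma^*(i_u^*\FF|_\sigma)$, where $\pi_\sigma$ is the restriction of $\pi$. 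Compatibility of the natural transformation above with restriction to strata glues these stratum-wise isomorphisms into a global one.

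The main obstacle I expect is verifying that the standard fact about locally constant sheaves on products with a simply connected factor holds for sheaves taking values in the abelian category $\Vu$ rather than in, say, vector spaces. This should follow from the explicit linear contracting homotopy $h_s(y,t)=(y,(1-s)t+su)$ on $Y\times (a,b)^k$, either via a direct computation of sections over convex open subsets of $(a,b)^k$ or via a Čech argument applied to the good cover of $(a,b)^k$ by convex (hence contractible) open subsets, keeping in mind that the construction only requires the abelian structure of $\Vu$.
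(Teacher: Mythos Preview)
Your outline is sound and would succeed, but note a small imprecision in the construction of the comparison map: after applying $\pi^*$, the restriction $R\pi_*\FF\to i_u^*\FF$ and the counit $\pi^*R\pi_*\FF\to\FF$ are both arrows \emph{out of} $\pi^*R\pi_*\FF$, so they do not compose to a single map $\pi^*i_u^*\FF\to\FF$. You would first need to show that $R\pi_*\FF\to i_u^*\FF$ is itself an isomorphism (another instance of the same stratum-wise computation) before inverting and composing. Alternatively, the contracting homotopy $h_s(y,t)=(y,(1-s)t+su)$ you mention at the end already produces a global comparison $\FF\simeq h_0^*\FF\to h_1^*\FF=\pi^*i_u^*\FF$ directly, via exactly the monodromy-type construction the paper develops in the proposition immediately following this lemma; that is probably the cleanest route and avoids the zigzag.

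Your approach is quite different from the paper's, which is a two-line argument: reduce to $k=1$ by iteration, then cite \cite[Lemma~4.1]{Treu} to conclude outright that $\pi^*$ is an equivalence; the identity $\pi\circ i_u=\id_Y$ then forces $i_u^*\pi^*=\id$, so $i_u^*$ is automatically a quasi-inverse. Your hands-on devissage and stratum-by-stratum verification is more elementary and self-contained, and it makes transparent why the product stratification $S\times S_0^k$ is exactly what trivializes constructibility along the contractible factor. The paper instead packages the homotopy-theoretic content into Treumann's exit-path\slash constructible-stack machinery, which buys brevity and also absorbs your final concern about $\mathcal{V}$-valued coefficients, at the cost of invoking an external black box.
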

\begin{proof}
    It suffices to prove the statement for $l=1$, the general case follows from iteration. The functor $\pi^\ast$ is an equivalence by \cite[Lemma~4.1]{Treu}. Furthermore, the equality
    \[
    \pi \circ i_u = id_Y
    \]
    implies that there is an equality $i_u^\ast \circ \pi^\ast = id_{D^b_{\! c} (Y, S, \mathcal{V})}$. As $i_u^\ast$ is a quasi-inverse to $\pi^\ast$, it is also an equivalence.
 \end{proof}
\begin{prop} \label{lem:equiv2}Let $(Y,S)$ and $(Y',S')$ be  stratified spaces. For a real interval $(a,b)$ and $l\in\NN$, let 
\[
\alpha: Y\times (a,b)^l \rightarrow Y'
\]
be a stratified map and $\pi\colon Y\times (a,b)^l\to Y$ be the natural projection. For $u\in (a,b)^l$, let $\alpha_u:=\alpha\circ i_u$ and let $\alpha_u^*\colon D^b_{\! c} (Y', S', \mathcal{V})\to D^b_{\! c} (Y, S, \mathcal{V})$ be its pull-back functor. 
Then, for every $u,\,v\in (a,b)^l$, and every $\FF$ in $D^b_c(Y',S',\Vu)$, the functors $i_u^*$ and $i_v^*$ induce identifications
\begin{equation}\label{eq:natural}
\begin{tikzcd}%[row sep=huge]
\mathrm{Hom}_{D^b_c(Y\times (a,b)^l,S\times S_0^l,\Vu)}(\pi^*\alpha_u^*\FF,\alpha^*\FF)\arrow[dd, swap,"i_u^*"] \arrow[rdd, "i_v^*"] \\
&&\\
\mathrm{Hom}_{D^b_c(Y,S,\Vu)}(\alpha_u^*\FF,\alpha_u^*\FF)&
\mathrm{Hom}_{D^b_c(Y,S,\Vu)}(\alpha_u^*\FF,\alpha_v^*\FF)
\end{tikzcd} 
\end{equation}
and the images in $\mathrm{Hom}_{D^b_c(Y,S,\Vu)}(\alpha_u^*\FF,\alpha_v^*\FF)$ of the identity in $\mathrm{Hom}_{D^b_c(Y,S,\Vu)}(\alpha_u^*\FF,\alpha_u^*\FF)$ for all $\FF$ combine to give a natural isomorphism of functors $\alpha_u^\ast  \rightarrow \alpha_v^\ast$. 
\end{prop}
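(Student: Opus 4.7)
The plan is to exploit the equivalence from Lemma \ref{lem:equiv}: for each $w\in (a,b)^k$, the functor $i_w^\ast$ is quasi-inverse to $\pi^\ast$ and, in particular, fully faithful, so it induces a bijection on Hom-sets. Combined with the equalities $i_w^\ast\pi^\ast = \id$ and $i_w^\ast\alpha^\ast = (\alpha\circ i_w)^\ast = \alpha_w^\ast$, this produces the two identifications in diagram \eqref{eq:natural}, with $i_u^\ast$ yielding a bijection onto $\Hom(\alpha_u^\ast\FF,\alpha_u^\ast\FF)$ and $i_v^\ast$ yielding a bijection onto $\Hom(\alpha_u^\ast\FF,\alpha_v^\ast\FF)$.

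Given these identifications, for each $\FF$ in $D^b_{\!c}(Y',S',\Vu)$ I would define $\xi_\FF\colon \pi^\ast\alpha_u^\ast\FF\to \alpha^\ast\FF$ to be the unique morphism whose image under $i_u^\ast$ is $\id_{\alpha_u^\ast\FF}$, and set $\eta_\FF := i_v^\ast(\xi_\FF)\colon \alpha_u^\ast\FF\to\alpha_v^\ast\FF$. These will be the components of the desired natural isomorphism $\alpha_u^\ast\to \alpha_v^\ast$.

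To verify naturality in $\FF$, given $f\colon \FF\to \FF'$ one computes that $i_u^\ast$ sends both $\alpha^\ast f\circ \xi_\FF$ and $\xi_{\FF'}\circ \pi^\ast\alpha_u^\ast f$ to $\alpha_u^\ast f$, using the defining property of $\xi$ together with $i_u^\ast\pi^\ast = \id$ and $i_u^\ast\alpha^\ast = \alpha_u^\ast$. Faithfulness of $i_u^\ast$ then yields the equality $\alpha^\ast f\circ \xi_\FF = \xi_{\FF'}\circ \pi^\ast\alpha_u^\ast f$, and applying the functor $i_v^\ast$ gives naturality of $\eta$. To check that each $\eta_\FF$ is an iso, note that $i_u^\ast\xi_\FF = \id$ is an isomorphism and $i_u^\ast$, being an equivalence, is conservative; hence $\xi_\FF$ is itself an iso, and $\eta_\FF = i_v^\ast\xi_\FF$ is an iso as the image of an iso under a functor.

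The substantive content is already packaged in Lemma \ref{lem:equiv}; the only point requiring care is the bookkeeping of the identifications $i_w^\ast\pi^\ast = \id$ and $i_w^\ast\alpha^\ast = \alpha_w^\ast$, so that the universal characterization of $\xi_\FF$ translates cleanly into the naturality check for $\eta$.
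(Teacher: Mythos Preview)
Your proof is correct and follows essentially the same approach as the paper: both use the full faithfulness of $i_u^\ast$ and $i_v^\ast$ from Lemma~\ref{lem:equiv} together with the identities $i_w^\ast\pi^\ast=\id$ and $i_w^\ast\alpha^\ast=\alpha_w^\ast$ to produce the two identifications, then define the natural isomorphism by transporting the identity along them. Your version is in fact more detailed than the paper's, explicitly spelling out the naturality check and the conservativity argument for invertibility that the paper leaves to the phrase ``follows from naturality of the above identifications.''
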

\begin{proof}
Let $u,\,v\in (a,b)^l$ and let $\mathcal{F}$ be an object 
of 
$D^b_{\! c} (Y', S', \mathcal{V})$. Its pull-back $\alpha^\ast \mathcal{F}$  and $\pi^\ast\alpha_u^\ast \mathcal{F}$ 
are objects of 
$D^b_{\! c} (Y\times (a,b)^l, S\times S^l_0, \mathcal{V})$. Applying the fully faithful functor $i_u^\ast$ to both gives the object 
$\alpha_u^\ast\mathcal{F}$, and the natural identification of Hom's on the left. Applying the fully faithful functor $i_v^*$ making use of the equivalence $\pi\circ i_v=\id_Y$, we obtain the natural identification of Hom's on the right. The last statement follows from naturality of the above identifications. 
\end{proof}

We call the isomorphisms of functors as in Proposition \ref{lem:equiv2} the monodromy isomorphism.

\begin{remark}\label{rem:open-embedding}{\rm
\begin{enumerate}
    \item 
Let $(a,b)$ be a real interval, $(Y,S)$, $(Y',S')$ and $(Y{''},S{''})$ be  stratified spaces, and let
\[
\alpha: Y\times (a,b) \rightarrow Y',\quad{\rm and} \quad f\colon Y'\to Y^{''} 
\]
be stratified morphisms, with $f$ an open embedding. Then $\alpha'=f\circ \alpha\colon Y\times (a,b) \rightarrow Y^{''}$ is also a stratified morphism. A direct verification shows that for $u,\,v\in (a,b)$  the monodromy isomorphisms of functors $M_{u,v}$ from $(\alpha\circ i_u)^*$ to $(\alpha\circ i_v)^*$ and $M'_{u,v}$ from $(\alpha'\circ i_u)^*$ to $(\alpha'\circ i_v)^*$  satisfy 
$M'_{u,v,\FF}=M_{u,v,f^*\FF}$ for any object $\FF$ in $D^b_c(Y^{''},S^{''},\Vu)$.
\item The monodromy isomorphism of functors $\alpha_u^\ast  \rightarrow \alpha_v^\ast$ is compatible with shifts in the derived categories, that is, identifying 
\begin{align*}
\mathrm{Hom}_{D^b_c(Y,S,\Vu)}(\alpha_u^*(\FF[1]),\alpha_v^*(\FF[1]))&=\mathrm{Hom}_{D^b_c(Y,S,\Vu)}((\alpha_u^*\FF)[1],(\alpha_v^*(\FF))[1])\\
&=\mathrm{Hom}_{D^b_c(Y,S,\Vu)}(\alpha_u^*(\FF),\alpha_v^*(\FF))[1]
\end{align*}
there holds $M_{u,v,\FF[1]}=M_{u,v,\FF}[1]$.
\end{enumerate}}
\end{remark}

\section{Perverse sheaves on \texorpdfstring{$\sym(\mathbb C)$}{Sym(C)}}
\subsection{Basic geometric definitions}\label{sec:sym-prod}
In this section we introduce the relevant spaces on which we will consider perverse sheaves. 
Unless otherwise stated, $n,m$ and $d$ are elements in $\NN$. 

  For a topological space $Y$, 
    the $n$-th symmetric product of $Y$, denoted by $\sym^n (Y)$, is defined to be the quotient $Y^n/{\mathbb S}_n$ of the space $Y^n$ by the natural action of the symmetric group ${\mathbb S}_n$, with the convention that $\sym^0(Y)$ for any $Y$ consists of a single point that we denote by ${\mathbf s}_0$ \label{s0} and $\sym^n(\emptyset)=\emptyset$ for any $n\geq 1$. Then $\mathrm{Sym}(Y):=\coprod_{n\geq 0}  \mathrm{Sym}^n (Y)$ is a graded topological space. If $Y$ is a non-empty open subset of $\mathbb C$, then $\mathrm{Sym}(Y)$ can be identified with the space of monic polynomials with roots in $Y$ and
    $\mathrm{Sym}^n (Y)$ with the subset of monic polynomials of degree $n$.  
    If $Y=\mathbb C$, then each graded component is an affine space of dimension $n$ by the Chevalley-Shephard-Todd theorem. 

\medskip

Recall that $I=(0,1)$ and $U=I\times I\subset{\mathbb R}^2 \simeq{\mathbb C}$. The spaces we focus on are

\begin{equation}\label{eq:X}
 \sym(\mathbb C),\quad \sym^n(U),\quad \sym(U),
\end{equation}
and, more generally, $\sym(U')$ and $\sym^n(U')$ for any open subset $U'$ of ${\mathbb C}$. The spaces  $\sym^n(U')$ are stratified by the so-called diagonal stratification \label{stratification} $\Sigma$, that is, multisets of $n$ points in $U'$ are partitioned into strata according to the multiplicities of occurrence of each point. Then, strata are in bijection with the set $\mathrm{P}(n)$ of  partitions of $n$. For $\lambda\in{\mathrm P}(n)$ and $U'\subset \mathbb C$ we denote by $\sym_\lambda(U')$ the corresponding stratum in $\sym(U')$. 

\medskip

For a non-empty open subset $U'$ of $\CC$, we denote by $\mathcal{P}_{U'}(n)$ the abelian category of perverse sheaves on $(\mathrm{Sym}(U'))^n$  with values in $\mathcal{V}$, smooth along the  stratification induced by multiplicities. \label{page:P(n)}

\begin{lem}\label{lem:eqperv}Let $U'$ be a non-empty open rectangle in $\CC$ and let $\iota\colon\sym(U')\to\sym(\CC)$ be the natural inclusion.
    Then, for any $n\in\NN$ the functor 
    $$(\iota^n)^\ast : \mathcal{P}_{\mathbb{C}}(n) \rightarrow \mathcal{P}_{U'}(n)$$
    is an equivalence.
\end{lem}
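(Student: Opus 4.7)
The plan is to combine a stratified isotopy argument with the monodromy isomorphism of Proposition~\ref{lem:equiv2}. A first reduction uses the decomposition $(\sym(\CC))^n=\coprod_{\mathbf{m}\in\NN^n}\prod_{i=1}^n \sym^{m_i}(\CC)$ (and analogously for $U'$), which $\iota^n$ respects. Hence $(\iota^n)^*$ splits as a product over $\mathbf{m}$ of the functors $\iota_{\mathbf{m}}^*$ coming from the open embeddings $\iota_{\mathbf{m}}\colon \prod_i \sym^{m_i}(U')\hookrightarrow \prod_i \sym^{m_i}(\CC)$, and it suffices to show that each $\iota_{\mathbf{m}}^*$ is an equivalence of the corresponding categories of perverse sheaves smooth along the diagonal stratification.

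Since $U'$ is an open rectangle I would fix an orientation-preserving homeomorphism $h\colon \CC\to U'$, for instance a product of suitably rescaled arctangents. Because $h$ is continuous and injective, each $\sym^{m_i}(h)$ is a stratification-preserving homeomorphism $\sym^{m_i}(\CC)\to \sym^{m_i}(U')$, and hence $H_{\mathbf{m}}:=\prod_i\sym^{m_i}(h)$ is a stratified homeomorphism. Thus $H_{\mathbf{m}}^*$ is an equivalence between the relevant perverse-sheaf categories. Setting $\psi:=\iota\circ h\colon \CC\to\CC$ and $\Psi_{\mathbf{m}}:=\iota_{\mathbf{m}}\circ H_{\mathbf{m}}=\prod_i\sym^{m_i}(\psi)$, one has $\Psi_{\mathbf{m}}^*=H_{\mathbf{m}}^*\circ \iota_{\mathbf{m}}^*$. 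Since $H_{\mathbf{m}}^*$ is an equivalence, it suffices to prove that $\Psi_{\mathbf{m}}^*$ is an equivalence of the relevant component of $\mathcal{P}_\CC(n)$ with itself.

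For this I would connect $\psi$ to $\id_\CC$ by an isotopy $\{\psi_t\}_{t\in(-1,2)}$ through continuous injective self-maps of $\CC$, arranged so that $\psi_t=\id$ for $t\le 0$ and $\psi_t=\psi$ for $t\ge 1$; such an isotopy is standard two-dimensional topology. Pointwise injectivity of each $\psi_t$ guarantees that the assembled map
\[
\alpha\colon \prod_i\sym^{m_i}(\CC)\times (-1,2)\longrightarrow \prod_i\sym^{m_i}(\CC),\qquad (x,t)\mapsto \prod_i\sym^{m_i}(\psi_t)(x),
\]
preserves multiplicity partitions in each fibre and is therefore a stratified map with $\alpha_0=\id$ and $\alpha_1=\Psi_{\mathbf{m}}$. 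Applying Proposition~\ref{lem:equiv2} with $k=1$, $u=0$ and $v=1$ yields a natural isomorphism $\alpha_0^*\simeq \alpha_1^*$, i.e.\ $\id\simeq \Psi_{\mathbf{m}}^*$ as endofunctors of $D^b_c$, which restricts to the subcategory of perverse sheaves because $\Psi_{\mathbf{m}}^*$ preserves perversity (being the composition of pull-back along an open embedding with pull-back along a stratified homeomorphism). Hence $\Psi_{\mathbf{m}}^*$ is naturally isomorphic to the identity, in particular an equivalence, and the proof is complete. The only delicate point is to arrange the isotopy so that the induced map on the product of symmetric powers is genuinely stratified in the sense required by Proposition~\ref{lem:equiv2}; the fact that pointwise injectivity of each $\psi_t$ suffices is the key observation that makes the argument go through.
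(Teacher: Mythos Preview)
Your argument is correct, and it takes a genuinely different route from the paper. The paper's proof is a two-line appeal to Treumann's results: the stack of perverse sheaves on $\sym(\CC)^n$ is constructible \cite[Theorem~4.4]{Treu}, the inclusion $\sym(U')^n\hookrightarrow\sym(\CC)^n$ is a stratified homotopy equivalence, and then \cite[Theorem~3.13]{Treu} gives the equivalence directly.

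Your approach instead stays within the paper's own toolkit: you factor $\iota_{\mathbf m}$ through a stratified homeomorphism and an open self-embedding $\Psi_{\mathbf m}$ of $\prod_i\sym^{m_i}(\CC)$, then isotope $\Psi_{\mathbf m}$ to the identity and invoke the monodromy isomorphism of Proposition~\ref{lem:equiv2} to conclude $\Psi_{\mathbf m}^*\simeq\id$. The reduction is sound, and the isotopy can indeed be made explicit (e.g.\ taking $h$ to be a product of increasing homeomorphisms $\RR\to(a_i,b_i)$ and interpolating linearly in each coordinate gives injective $\psi_t$ for all $t$). The resulting $\alpha$ sends each stratum $S_\lambda\times(-1,2)$ into $S_\lambda$, which is all that is needed for $\alpha^*\FF$ to land in the correct constructible category and for Proposition~\ref{lem:equiv2} to apply. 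What your argument buys is that it avoids the stack-theoretic machinery of \cite[Theorems~3.13, 4.4]{Treu} and uses only the more elementary Lemma~\ref{lem:equiv} (itself resting on \cite[Lemma~4.1]{Treu}); the price is the explicit isotopy construction. The paper's proof is shorter but leans on heavier external input.
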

\begin{proof}
By \cite[Theorem~4.4]{Treu}  the stack of perverse sheaves on $\mathrm{Sym}(\mathbb{C})^n$ is constructible, \cite[Definition~3.8]{Treu}. Furthermore, the inclusion $\mathrm{Sym}(U')^n\rightarrow \mathrm{Sym}(\mathbb{C})^n$ is a stratified homotopy equivalence, \cite[Definition~3.7]{Treu}. Therefore the statement follows from \cite[Theorem~3.13]{Treu}.
\end{proof}

According to Lemma~\ref{lem:eqperv}, there is no loss of information by restricting perverse sheaves on $\mathrm{Sym}(\mathbb{C})$ to the open subset $\mathrm{Sym}(U)$. In particular, in most of the results in \cite{KS3} we may replace $\mathrm{Sym}(\mathbb{C})$ by $\sym(U)$. For this reason, we will mainly focus on $\sym(U)$ instead of  $\mathrm{Sym}(\mathbb{C})$.  For simplicity of notation, we will drop $U$ as subscript, that is, we set $\mathcal{P}(1):=\mathcal{P}_U(1)$.

For any $d\in\NN$ and any open subset $U'\subseteq {\mathbb C}$ we consider the dense open subset $\mathrm{Sym}^n_{\leq d}(U')$ of $\sym^n(U')$ consisting of multisets of points in $U'$ with multiplicity bounded by $d$.
We then have the following open truncations of $\sym(U')$:
\begin{equation}\label{eq:dense-truncation}\mathrm{Sym}_{\leq d}(U')  =  \coprod_{n\in\NN}\mathrm{Sym}^n_{\leq d}(U'),\quad \mathrm{Sym}^{\leq d}(U')
    =  \coprod_{n\in\NN_{\leq d}}\mathrm{Sym}^n(U').\end{equation}
    We observe that, while $\sym^{\leq d}(U')$ is finite-dimensional, the  open subset $\sym_{\leq d}(U')$ is dense in $\sym(U')$ for any $d\in\NN_{\geq1}$. Indeed, $\sym_{\neq}(U'):=\sym_{\leq 1}(U')$ is the configuration space of unordered points in $U'$, which is dense in $\sym(U')$, and $\sym_{\neq}(U')\subseteq\sym_{\leq d}(U')$ for any $d\in\NN_{\geq 1}$.

    \medskip
   
    By construction, $\sym^{\leq d}(U')$ and $\sym_{\leq d}(U')$ are unions of strata of $\sym(U')$.
More precisely, for $\lambda$ a finite sequence of non-negative integers, we denote by $d(\lambda)$ the depth \label{depth} of $\lambda$, that is, the maximum value of its components, and by $|\lambda|$ the sum of its components. Then, for $U'=U$ we have:

\begin{equation}\label{eq:truncated-strata}
\mathrm{Sym}^{\leq d}(U)=\coprod_{n\in\NN_{\leq d}}\coprod_{\lambda\in \mathrm{P}(n)}\sym_\lambda(U),\quad 
\sym_{\leq d}(U)=\coprod_{n\in\NN}\coprod_{\genfrac{}{}{0pt}{2} {\lambda\in \mathrm{P}(n),}{d(\lambda)\leq d}}\sym_\lambda(U).\end{equation}

If $U', U''$ are open in $\mathbb{C}$, with $U'\subset U''$, then the natural map \(\mathrm{Sym}(U')\rightarrow \mathrm{Sym}(U'')\)
is an open inclusion of stratified spaces, and similarly for the truncated analogues. 

In addition, for $c,\,d\in \NN$ with $d\leq c$
we denote by
\begin{align}\label{eq:j-inclusions}
j_{\leq d,\leq c}&\colon \mathrm{Sym}_{\leq d}({\mathbb C})
    \hookrightarrow
    \mathrm{Sym}_{\leq c}({\mathbb C})
&\mbox{ and }&&j_{\leq d}\colon  
    \mathrm{Sym}_{\leq d}({\mathbb C})
    \hookrightarrow
    \mathrm{Sym}({\mathbb C})\end{align}
    the canonical inclusions. We will use the same notation to denote their restrictions to the graded component $\sym^n_{\leq d}(U)$. Clearly, 
    \begin{equation*}j_{\leq d}=j_{\leq c}\circ j_{\leq d,\leq c} \mbox{ and } j_{\leq d,\leq c}\circ j_{\leq e,\leq d}= j_{\leq e,\leq c}\mbox{ for }e\leq d\leq c.\end{equation*}

\medskip 
The symmetric product yields a braided  monoidal functor
    $$\mathrm{Sym}(-): \mathrm{Top}^{\coprod}
    \rightarrow  \mathrm{Top}^{\prod}\  $$
sending open embeddings to open embeddings, hence, applying the functor $\sym$ to a linear embedding $\varphi: U^{\coprod n}\rightarrow U$ one gets an open embedding,  called the multiplication embedding,
\begin{equation}\label{a-phi}
a_\varphi : \mathrm{Sym}(U) ^n \rightarrow \mathrm{Sym}(U).
\end{equation}
The maps $a_\varphi$, for $\varphi\in E_2(n)$ may be composed and form an operad isomorphic to $E_2$, and with values in $\mathrm {Top}^{\prod}$. For each $n$, the isomorphism of operads sends a linear embedding $\varphi$ to the multiplication embedding $a_\varphi$. The operadic composition on these $a_\varphi$ is obtained by applying $\mathrm{Sym}$ to the diagram (\ref{eq:linemb}), yielding the diagram

\begin{equation}\label{eq:symemb}
\begin{tikzcd}[row sep=huge]
\mathrm{Sym}(U)^{ n} \arrow[rr, "a_\varphi"]  & &\mathrm{Sym}(U) \\
\prod_{l=1}^{n} \mathrm{Sym}(U)^{ m_l} \arrow[u, "\prod_{l=1}^n a_{\psi_l}"] \arrow[rr, "="] & &
\mathrm{Sym}(U)^{ m} \arrow[u, "a_\eta"]
\end{tikzcd} 
\end{equation}
Observe that since  for any $\sigma\in {\mathbb S}_n$ and any $\varphi\in E_2(n)$, there holds \begin{equation}\label{eq:equivariance-a-phi}a_{\varphi^\sigma}=a_{\varphi}\circ\sigma\end{equation} 
the above mentioned isomorphism is of symmetric operads.
The restriction of the multiplication embedding $a_\varphi$ to 
    $(\sym_{\leq d}(U))^n$, or equivalently, the application of the functor 
    $\mathrm{Sym}_{\leq d}(-)$ to the linear embedding $\varphi$, gives
the open embedding 
    \begin{equation}\label{a-phi-leq}
    a_{\varphi,\leq d} : 
    (\sym_{\leq d}(U))^n
    \rightarrow
   \sym_{\leq d}(U).
    \end{equation}

Since $\sym(U)$ is a graded space,  $(\sym(U))^n$ is graded for every $n\geq 1$. The open subset $\sym(U)^n(\leq d)$ of $\sym(U)^n$ is given by
     \[\sym(U)^n({\leq d})=
\coprod_{l\in\NN_{\leq d}} \coprod_{m_1+\cdots+m_n=l} \mathrm{Sym}^{m_1}(U) \times\cdots\times \mathrm{Sym}^{m_n}(U).\]
    Moreover, for every linear embedding $\varphi: U^{\coprod n}\rightarrow U$, the corresponding map 
    \(a_\varphi \)
     is a graded map.  The restriction to $\sym(U)^n({\leq d})$ of the (graded) morphism  $a_{\varphi}:\sym(U)^n \rightarrow \sym(U)$ gives a morphism  \begin{equation}\label{a-phi-leq-alto}a_{\varphi}^{\leq d}\colon (\sym(U)^n)({\leq d}) \longrightarrow 
\sym^{\leq d}(U)\end{equation}

     Analogous assertions hold replacing $U$ by any open subset $U'$ of ${\mathbb C}$.

\subsection{Relevant categories and functors}
In this section we introduce the  notions and  properties that are instrumental for the definition of factorized perverse sheaves on $\sym(U)$, $\mathrm{Sym}_{\leq d}(U)$ and $\sym^{\leq d}(U)$ in  Section \ref{facdata}. 

\subsubsection{Restriction and extension}\label{sec:pervcat}

 For $d\in \NN$ and $n\in\NN_{\geq1}$ we denote by \label{perverse-X}
$\mathcal P(n)$,   $\mathcal{P}_{\leq d}(n)$ and $\mathcal{P}^{\leq d}(n)$  the categories of perverse sheaves on  $\sym(U)^n$,
    $(\sym_{\leq d}(U))^n$ and $(\sym(U))^n({\leq d})$, respectively. We also consider the categories $\mathcal{P}=\coprod_{n\in\NN} \mathcal{P}(n)$,  $\mathcal{P}_{\leq d}:= \coprod_{n\in\NN} \mathcal{P}_{\leq d}(n)$ and  $\mathcal{P}^{\leq d}:=\coprod_{n\in\NN} \mathcal{P}^{\leq d}(n)$, respectively. These categories are related by the following functors. 

    \medskip
    
 Let $c,d\in\NN$ with $d\leq c$. The pull-back of the open embeddings 
    \begin{align}\label{eq:embedding-j}j_{\leq d}^n\colon 
    (\sym_{\leq d}(U))^n
    \hookrightarrow
    \sym(U)^n,&&j_{\leq d,\leq c}^n\colon  (\sym_{\leq d}(U))^n\hookrightarrow (\sym_{\leq c}(U))^n\end{align} yields
functors \label{p:funtori-j}
\begin{align*}
&(j^n_{\leq d})^\ast \colon \mathcal{P}(n) 
\rightarrow
\mathcal{P}_{\leq d}(n)& (j^n_{\leq d,\leq c})^\ast \colon \mathcal{P}_{\leq c}(n) 
\rightarrow
\mathcal{P}_{\leq d}(n).
\end{align*}
We will sometimes omit the superscript $n$ to lighten the notation.  The left and right adjoints of these functors are respectively
\[
^p (j^n_{\leq d})_! : 
\mathcal{P}_{\leq d}(n) \
\rightarrow
\mathcal{P}(n) 
,\quad\textrm{ and }\quad
^p (j^n_{\leq d})_\ast : 
\mathcal{P}_{\leq d}(n) \
\rightarrow
\mathcal{P}(n).
\]
and
\[
^p (j^n_{\leq d,\leq c})_! : 
\mathcal{P}_{\leq d}(n) \
\rightarrow
\mathcal{P}_{\leq c}(n) 
,\quad\textrm{ and }\quad
^p (j^n_{\leq d,\leq c})_\ast : 
\mathcal{P}_{\leq d}(n) \
\rightarrow
\mathcal{P}_{\leq c}(n),
\]
\cite[\S 4.2.4]{BBD}, where $^{p}={}^p\tau_{\leq 0}\circ {}^p\tau_{\geq 0}$ is the perverse truncation, \cite[5.3]{dCM}. The latter is needed to ensure that we obtain a perverse sheaf because the open embeddings are not affine for $d>0$. Since $(j_{\leq d,\leq c})_*$
 and $(j_{\leq d})_*$ are left $t$-exact, the perverse truncations in these cases are $^{p}(j_{\leq d,\leq c})_*={}^p\tau_{\leq 0}(j_{\leq d,\leq c})_*$ and $^{p}(j_{\leq d})_*={}^p\tau_{\leq 0}(j_{\leq d})_*$. Dually, ${}^{p}(j_{\leq d,\leq c})_!={}^p\tau_{\geq 0}(j_{\leq d,\leq c})_!$ and ${}^{p}(j_{\leq d})_!={}^p\tau_{\geq 0}(j_{\leq d})_!$

\medskip

Taking the image of \(^p (j^n_{\leq d})_!\FF\) in  \(^p (j^n_{\leq d})_*\FF\), and the image of $^p (j^n_{\leq d,\leq c})_!\FF$  in $^p (j^n_{\leq d,\leq c})_*\FF$ for  $\FF\in\PPP_{\leq d}(n)$  gives the intermediate extension functors, \cite{BBD}\cite[Section 2.7]{dCM}:
\[^p (j^n_{\leq d})_{!\ast} : 
\mathcal{P}_{\leq d}(n) \
\rightarrow
\mathcal{P}(n),\quad\textrm{ and }\quad 
^p (j^n_{\leq d,\leq c})_{!*} : 
\mathcal{P}_{\leq d}(n) \
\rightarrow
\mathcal{P}_{\leq c}(n). 
\]

\medskip

For $d\leq c$, the pull-back functors along the inclusions 
\begin{equation}\label{eq:inclusions-i}
i^{\leq d}\colon \sym(U)^n({\leq d})\rightarrow
\sym(U)^n,\quad i^{\leq d,\leq c}\colon \sym(U)^n({\leq d})\rightarrow
\sym(U)^n({\leq c}),
\end{equation}
give rise to the truncation functors  \label{p:funtori-i}
    \[
    (\phantom{a})^{\leq d} : \mathcal{P}(n) \rightarrow \mathcal{P}^{\leq d}(n),\quad  (\phantom{a})^{\leq d,\leq c} : \mathcal{P}^{\leq c}(n)\rightarrow \mathcal{P}^{\leq d}(n).
    \]
 To lighten notation, we will usually write $(\phantom{a})^{\leq d}$ to indicate also its restriction to $\mathcal{P}_{\leq d}(n)$, and, when the context allows it, to indicate $(\phantom{a})^{\leq d,\leq c}$. 

\medskip
 
By construction, the diagram below is commutative for any $d\leq c$.

\begin{equation}\label{dgm:compatibility-limit}
\begin{tikzcd}[row sep=huge]
\mathcal{P}(n) \arrow[rrd,  "(\phantom{a})^{\leq c}"'] \arrow[rr, swap, "(j_{\leq c})^*"'] && \mathcal{P}_{\leq c}(n)  \arrow[d, "(\ )^{\leq c}"] \arrow[rr,  "(j_{\leq d,\leq c})^\ast"] &&  \mathcal{P}_{\leq d}(n)  \arrow[d, swap,  "(\ )^{\leq d}"']\\
&&\mathcal{P}^{\leq c}(n) \arrow[rr,  "(\ )^{\leq d,\leq c}"'] & & 
\mathcal{P}^{\leq d}(n) 
\end{tikzcd} 
\end{equation}

\subsubsection{The tensor product $\boxtimes_{\varphi}$}\label{sec:tens-phi} In this subsection we endow the categories $\mathcal{P}$,  $\mathcal{P}_{\leq d}$ and  $\mathcal{P}^{\leq d}$, for any $d\in\NN$ with a braided monoidal category structure. 

\medskip

For $n\in\NN$ the construction in \eqref{eq:boxphi} 
gives functors
 \[
\boxtimes_{\varphi}: \mathcal{P}^n
\rightarrow
\mathcal{P}, \quad \textrm{ and }\quad
\boxtimes_{\varphi}: \mathcal{P}_{\leq d}^n
\rightarrow
\mathcal{P}_{\leq d}
\]
indexed by the linear embeddings $\varphi\in E_2(n)$. For any path $\gamma$ from $\varphi$ to $\varphi'$ in $E_2(n)$, the isomorphism of functors $\underline{R}_\gamma$ as in \eqref{eq:defR} gives the braiding.

\medskip

In addition, for $n\in\NN$ and any $\varphi\in E_2(n)$, the external tensor product $\boxtimes_{\varphi}$ of objects in $\mathcal{P}^{\leq d}(n_1), \cdots,  \mathcal{P}^{\leq d}(n_r)$  gives a perverse sheaf on 
$\prod_{i=1}^r \left(\sym(U)^{n_i}({\leq d})\right)$ whose restriction $(-)^{\leq d}$ to the open subset $\sym(U)^{\sum_l n_l}({\leq d})$ is an object in $\mathcal{P}^{\leq d}(\sum_l n_l)$, giving a functor 
\[\overline{{\boxtimes}}_\varphi \colon (\mathcal{P}^{\leq d})^n\to \mathcal{P}^{\leq d}.\]

\medskip

\begin{lem}\label{lem:treni-tensor}Let $c,d,e,n\in \NN$ with $c\geq d\geq e$, $n\in\NN_{\geq1}$, and $\varphi\in E_2(n)$. Let  
$\bullet$ stand for any of the extension functors $!$, $*$ or $!*$. The diagrams below are commutative. 
\begin{equation}\label{dgm-tensor-upperstar}
\begin{tikzcd}[cramped, sep=small] 
 \mathcal{P}(1)^n \arrow[rr,swap,  "(j_{\leq c}^*)^n"']
\arrow[d, swap, "\boxtimes_\varphi"] 
& & 
 \mathcal{P}_{\leq c}(1)^n \arrow[d, "\boxtimes_\varphi"] 
\arrow[rr, swap,  "(j_{\leq d,\leq c}^*)^n"']&&
 \mathcal{P}_{\leq d}(1)^n \arrow[d, "\boxtimes_\varphi"] 
\arrow[rr,  swap,  "(\phantom{a})^{\leq d}"']&&
\mathcal{P}^{\leq d}(1)^n \arrow[d, "\overline{\boxtimes}_\varphi"]\arrow[rr, swap,"(\phantom{a})^{\leq e,\leq d}"']&&\mathcal{P}^{\leq e}(1)^n \arrow[d, "\overline{\boxtimes}_\varphi"] \\
\mathcal{P}(n) \arrow[rr, "(j^n_{\leq c})^\ast"']&&\mathcal{P}_{\leq c}(n) \arrow[rr, "(j^n_{\leq d,\leq c})^*"']&&\mathcal{P}_{\leq d}(n) \arrow[rr, "(\phantom{a})^{\leq d}"']&&\mathcal{P}^{\leq d}(n)\arrow[rr, "(\phantom{a})^{\leq e,\leq d}"']&&\mathcal{P}^{\leq e}(n)
\end{tikzcd}
\end{equation}
 
\begin{equation}\label{dgm:tensor-lower*and!}
\begin{tikzcd}%[row sep=huge]
\mathcal{P}_{\leq d}(1)^n \arrow[d, swap, "\boxtimes_\varphi"] \arrow[rr, swap,  "^{p}(j_{\leq d,\leq c})^n_\bullet"'] &&\mathcal{P}_{\leq c}(1)^n \arrow[d, swap, "\boxtimes_\varphi"] \arrow[rr, swap,  "^{p}(j_{\leq c})^n_\bullet"'] &&\mathcal{P}(1)^n \arrow[d, swap, "\boxtimes_\varphi"] \\
\mathcal{P}_{\leq d}(n)\arrow[rr, swap,  "^{p}(j^n_{\leq d,\leq c})_\bullet"']&&\mathcal{P}_{\leq c}(n)\arrow[rr, swap,  "^{p}(j_{\leq c}^n)_\bullet"']&&\mathcal{P}(n) 
\end{tikzcd} 
\end{equation}
In addition, $\underline{R}$ is compatible with restrictions and extensions. 
\end{lem}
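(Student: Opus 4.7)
The plan is to verify the two diagrams square by square, since once each adjacent pair is established, the concatenation follows by pasting. All the horizontal functors are built from open embeddings of stratified spaces that arise as $n$-fold Cartesian products of open embeddings in $\sym(U)$, so the strategy reduces in each case to combining a Cartesian base-change identity with the compatibility of $\boxtimes_\varphi$ with the braided monoidal structure induced by pull-back.

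For the upper-star diagram \eqref{dgm-tensor-upperstar}, write $j_{\leq c}^n = j_{\leq c} \times \cdots \times j_{\leq c}$ and similarly for $j_{\leq d,\leq c}^n$, $i^{\leq d}$, and $i^{\leq d,\leq c}$. Each canonical projection $p_i$ from the $n$-fold Cartesian product to the $i$-th factor fits into a Cartesian square with the relevant open embedding, giving $(j^n)^{\ast} \circ p_i^{\ast} \cong p_i^{\ast} \circ j^{\ast}$. Since $\boxtimes_\varphi = \otimes_\varphi(p_1^{\ast},\ldots,p_n^{\ast})$ and pull-back along an open embedding is braided monoidal with respect to the $\otimes$ on $D^b_c$ (Remark~\ref{rem:D-monoidal} and \S\ref{tensprod}), the functorial isomorphism governing $\otimes_\varphi$ transports through, yielding commutativity of each square. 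The squares involving $(\phantom{a})^{\leq d}$ and $(\phantom{a})^{\leq e,\leq d}$ are handled identically, since these are also pull-backs along the open graded inclusions \eqref{eq:inclusions-i}.

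For the lower diagram \eqref{dgm:tensor-lower*and!} with $\bullet = \ast$, the key tool is the Künneth-type identity for a Cartesian product of open embeddings,
\[
(j_1 \times \cdots \times j_n)_{\ast} (F_1 \boxtimes \cdots \boxtimes F_n) \;\cong\; (j_{1\ast} F_1) \boxtimes \cdots \boxtimes (j_{n\ast} F_n),
\]
at the level of $D^b_c$, together with its dual $(\phantom{a})_!$ counterpart; both follow from the projection formula applied to the base-change squares for the $p_i$. The perverse truncation appearing in the definition of $^p(j)_{\ast}$ is then controlled by the fact that $\boxtimes$ of perverse sheaves is perverse, combined with bi-exactness of $\boxtimes_\varphi$ on $\PPP$: one obtains ${}^p\tau_{\leq 0}(A \boxtimes P) \cong ({}^p\tau_{\leq 0} A) \boxtimes P$ whenever $P$ is perverse, and iteratively in each of the $n$ slots. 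This handles the $\bullet = \ast$ case; the $\bullet = !$ case is dual, and $\bullet = !\ast$ follows by taking images of ${}^p j_! \to {}^p j_{\ast}$ in $\PPP$ and using exactness of $\boxtimes_\varphi$ on perverse sheaves.

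Compatibility of $\underline{R}$ with all the restriction and extension functors is then automatic from naturality: the isomorphism $\underline{R}_\gamma$ of \eqref{eq:defR} is built from the braiding on $D^b_c$, which is preserved by open-embedding pull-back (Remark~\ref{rem:D-monoidal}) and intertwines with the Künneth isomorphisms for $_!$ and $_\ast$, so the squares obtained from \eqref{dgm-tensor-upperstar} and \eqref{dgm:tensor-lower*and!} by replacing $\boxtimes_\varphi$ with $\underline{R}_\gamma$ commute as well. I expect the main technical obstacle to be the interaction of the perverse truncation with $\boxtimes$, namely the cohomological identity ${}^p\tau_{\leq 0}(\boxtimes \cdots) \cong \boxtimes\, {}^p\tau_{\leq 0}(\cdots)$, since the embeddings $j_{\leq d,\leq c}$ are not affine for $d>0$ and the truncation in the definition of ${}^p(\phantom{a})_\bullet$ cannot be avoided. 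Everything else is formal from the Cartesian structure of the embeddings and Deligne's $\Pi_1 E_2$-algebra structure.
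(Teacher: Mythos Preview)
Your proposal is correct and follows essentially the same approach as the paper: reduce to the ordinary $\boxtimes$ via Deligne's reformulation, use the transitivity/base-change identities $p_l^*\mathbf{j}^* = (\mathbf{j}^n)^*(p_l')^*$ for the pull-back diagram, invoke the K\"unneth identity $(\mathbf{j}^n)_\bullet(\FF_1\boxtimes\cdots\boxtimes\FF_n)\cong \mathbf{j}_\bullet\FF_1\boxtimes\cdots\boxtimes\mathbf{j}_\bullet\FF_n$ for $\bullet\in\{!,*\}$, and then pass through perverse truncation via $t$-exactness of $\boxtimes$ (the paper cites \cite[Theorem~4.2.8]{BBD} for this, which is exactly your ${}^p\tau_{\leq 0}(A\boxtimes P)\cong({}^p\tau_{\leq 0}A)\boxtimes P$ argument). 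The $!*$ case and the compatibility of $\underline{R}$ are handled identically in both.
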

\begin{proof} By Theorem \ref{thm:del} it is enough to show that the ordinary external tensor product $\boxtimes$ is compatible with the restriction, truncation, and extension functors. Let ${\mathbf j}\colon Y\to Y'$ be either $j_{\leq d}\colon \sym_{\leq d}(U)\to \sym(U)$ or $j_{\leq d,\leq c}\colon \sym_{\leq d}(U)\to \sym_{\leq c}(U)$
and let  $p_l\colon  Y^n\to Y$ and $p'_l\colon  (Y')^n\to Y'$ for $l=1,\,\ldots,\,n$ be the natural projections. Transitivity \cite[p. 624]{dCM} gives
$p^*_l{\mathbf j}^*=(\mathbf j^n)^*(p')^*_l$ for each $l$, so commutativity of  \eqref{dgm-tensor-upperstar} follows from the definition of $\boxtimes$ and of $\overline{\boxtimes}$. Then compatibility with $\underline{R}$ follows from the construction of $\boxtimes$ and \eqref{eq:defR}.

\medskip

We consider now extensions, showing compatibility of $\boxtimes$ with $\mathbf{j}_\bullet$ in the case $n=2$ for readability. Let $\FF_1,\FF_2\in D^b_c(Y,\Sigma,\Vu)$, so $\FF_1\boxtimes\FF_2\in D^b_c(Y^2,\Sigma^2,\Vu)$. By \cite[Propositions 1.4.21,2.9.1]{achar}  there holds $(\mathbf j^2)_\bullet(\FF_1\boxtimes\FF_2)=\mathbf j_\bullet\FF_1\boxtimes \mathbf j_\bullet\FF_2$ for $\bullet=!$ or $*$, and commutativity of \eqref{dgm:tensor-lower*and!} in these cases follows from $t$-exactness of $\boxtimes$, \cite[Theorem 4.2.8]{BBD}.
    
We turn to $\bullet=!*$. If $\FF_1, \FF_2$ are perverse sheaves, the canonical morphism \cite[2.7]{dCM}
\begin{equation*}
(\mathbf j\times\mathbf j)_!(\FF_1\boxtimes\FF_2)\simeq \mathbf j_!\FF_1\boxtimes\mathbf j_!\FF_2\longrightarrow \mathbf j_*\FF_1\boxtimes\mathbf j_*\FF_2\simeq (\mathbf j\times\mathbf j)_*\FF_1\boxtimes\FF_2
\end{equation*}
is obtained by applying $\boxtimes$ to the canonical morphisms 
$\mathbf j_!\FF_1\to\mathbf j_*\FF_1$ and $\mathbf j_!\FF_2\to\mathbf j_*\FF_2$, giving compatibility also for the image that is, commutativity of \eqref{dgm:tensor-lower*and!} for $!*$. Now, $\underline{R}$ is defined at the level of complexes and $\Vu$ via \eqref{eq:defR}, and compatibility with extensions follows by construction.
\end{proof} 

Since $\overline{\boxtimes}_\varphi$ and ${\boxtimes}_\varphi$ are compatible, when clear from the context, we will omit the $\overline{\phantom{a}}$ symbol. 

\medskip 

More generally, if $m_1,\dots, m_n$ are positive integers such that $m:=\sum_{i=1}^n m_i$, then for any $\varphi\in E_2(n)$ we have a functor
\[
\boxtimes_{\varphi}: \mathcal{P}(m_1)\times  \mathcal{P}(m_2)\times \dots \times \mathcal{P}(m_n)
\rightarrow
\mathcal{P}(m), 
\]
and similarly for $\mathcal{P}_{\leq d}$ and $\mathcal{P}^{\leq d}$. By construction, these functors are compatible with operadic composition.

\medskip

\begin{lem}\label{lem:many-tensors}Let $c,d,n\in\NN$ with $c\geq d$, $n\in\NN_{\geq1}$, and $\varphi\in E_2(n)$. For $l=1,
\,\ldots,\, n$, let $m_l\in \NN_{\geq 0}$ and $m=\sum_l m_l$. Let ${\mathbf j}\colon \sym_{\leq d}(U)\to Y$ be either $j_{\leq d}$, or  $j_{\leq d,\leq c}$, 
and let $\bullet$ stand for  $!$, $*$ or $!*$. Then the diagrams below are commutative.
\begin{equation}\label{dgm:tensor-tanti}
\begin{tikzcd}%[row sep=huge]
 \mathcal{PS}(Y^{m_1}, \Sigma^{m_1},\Vu)\times \dots \times\mathcal{PS}(Y^{m_n}, \Sigma^{m_n},\Vu)
\arrow[rr, "\boxtimes_\varphi"] 
\arrow[d, "\prod_{l=1}^n({\mathbf j}^{m_l})^\ast"']& 
&  \mathcal{PS}(Y^{m}, \Sigma^{m},\Vu) \arrow[d,swap, "({\mathbf j}^m)^\ast"']
\\
 \PPP_{\leq d}({m_1})\times \dots \times\PPP_{\leq d}(m_n)\arrow[rr, "\boxtimes_\varphi"] 
 & &
 \PPP_{\leq d}({m}).
\end{tikzcd} 
\end{equation}
\medskip
 \begin{equation}\label{dgm:big2} \begin{tikzcd}[row sep=huge]
\PPP_{\leq d}({m_1})\times \dots \times\PPP_{\leq d}({m_n})  \arrow[rr, "\boxtimes_\varphi"] 
    \arrow[d, "\prod_{l=1}^n{^p}(\mathbf j^{m_l})_\bullet"']& 
    & \PPP_{\leq d}({m})\arrow[d,  "{^p}(\mathbf j^m)_\bullet"]
    \\
   \mathcal{PS}(Y^{m_1},\Sigma^{m_1},\Vu)\times \dots \times\mathcal{PS}(Y^{m_n},\Sigma^{m_n},\Vu)  \arrow[rr, "\boxtimes_\varphi"] 
    & &
    \mathcal{PS}(Y^{m},\Sigma^{m},\Vu) 
    \end{tikzcd} 
    \end{equation}
\end{lem}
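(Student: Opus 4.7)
The strategy mirrors the proof of Lemma \ref{lem:treni-tensor}, extended to the case of varying degrees $m_1,\ldots,m_n$. By Theorem \ref{thm:del} applied to the braided structure on the relevant constructible derived categories, it suffices to verify the commutativity of the two diagrams for the ordinary external tensor product $\boxtimes$ (the case of a trivial choice of $\varphi$); the compatibility with the braiding $\underline{R}_\gamma$ for an arbitrary path $\gamma$ in $E_2(n)$ then follows because $\underline{R}$ is defined via \eqref{eq:defR} entirely at the level of complexes, and the Künneth isomorphisms entering the argument are natural.

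For diagram \eqref{dgm:tensor-tanti}, the key observation is that if $p_l\colon Y^m \to Y^{m_l}$ and $p'_l\colon \sym_{\leq d}(U)^m \to \sym_{\leq d}(U)^{m_l}$ denote the projections determined by the decomposition $m = m_1+\cdots+m_n$, then the square $\mathbf{j}^{m_l} \circ p'_l = p_l \circ \mathbf{j}^m$ commutes. Combining transitivity of pullbacks \cite[p.~624]{dCM} with the definition \eqref{eq:outer} of $\boxtimes$ one gets the commutativity of the diagram on objects, and functoriality transports it to morphisms. Compatibility with $\underline{R}$ then follows exactly as in Lemma \ref{lem:treni-tensor}.

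For diagram \eqref{dgm:big2}, I would invoke the Künneth-type isomorphism $(\mathbf{j}^m)_\bullet(\FF_1 \boxtimes \cdots \boxtimes \FF_n) \cong (\mathbf{j}^{m_1})_\bullet \FF_1 \boxtimes \cdots \boxtimes (\mathbf{j}^{m_n})_\bullet \FF_n$ for $\bullet \in \{!, *\}$, which holds since $\mathbf{j}^{m_l}$ is a product of open embeddings by \cite[Propositions 1.4.21, 2.9.1]{achar}, together with the $t$-exactness of $\boxtimes$ \cite[Theorem 4.2.8]{BBD} to commute the functors past the perverse truncation ${}^p\tau$ built into the definition of ${}^p(\mathbf{j}^m)_\bullet$. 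For the intermediate extension $!\ast$, one uses that ${}^p(\mathbf{j}^m)_{!\ast}\GG$ is the image of the canonical morphism ${}^p(\mathbf{j}^m)_!\GG \to {}^p(\mathbf{j}^m)_\ast\GG$; since $\boxtimes$ is exact in each variable, it commutes with the formation of images, so compatibility with $!$ and $\ast$ propagates to $!\ast$.

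The main obstacle is purely bookkeeping: one must fix a consistent natural identification $Y^m \cong Y^{m_1}\times\cdots\times Y^{m_n}$ (and the corresponding identification $\Sigma^m \cong \Sigma^{m_1}\times\cdots\times\Sigma^{m_n}$ of product stratifications) so that the operadic decomposition of $\boxtimes_\varphi$ through the projections $p_l$ and through $\otimes_\varphi$ matches the varying block sizes $m_l$. Once these identifications are fixed, no new ingredient beyond those already used in Lemma \ref{lem:treni-tensor} is required, and the proof reduces to a diagram chase.
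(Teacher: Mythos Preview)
Your proposal is correct and takes essentially the same approach as the paper: the paper's proof consists of a single sentence stating that the arguments of Lemma~\ref{lem:treni-tensor} carry over, and you have simply spelled out those arguments in the more general setting with varying block sizes $m_l$. The ingredients you invoke (transitivity of pullbacks, the K\"unneth identities from \cite[Propositions~1.4.21, 2.9.1]{achar}, $t$-exactness of $\boxtimes$ from \cite[Theorem~4.2.8]{BBD}, and passage to images for $!*$) are exactly those used in the proof of Lemma~\ref{lem:treni-tensor}.
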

\begin{proof}Commutativity of the diagrams follows from arguments similar to those used in the proof of Lemma \ref{lem:treni-tensor}.
\end{proof}

\medskip

For ${\mathcal F}\in{\mathcal P}(1)$ or ${\mathcal P}_{\leq d}(1)$ and $\varphi\in E_2(n)$ we denote by ${\mathcal F}^{\boxtimes_{\varphi}n}$ the $n$-fold tensor product of ${\mathcal F}$ with itself.  For $\FF_1,\,\ldots,\,\FF_n\in {\mathcal P}(1)$ or ${\mathcal P}_{\leq d}(1)$ we will also set $\boxtimes_\varphi(\FF_l)_{l=1}^n$ for $\boxtimes_\varphi(\FF_1,\,\ldots,\,\FF_n)$, and similarly for morphisms.

\begin{remark}\label{rem:erre-operadic}
{\rm Let $d,m\in\NN$. Compatibility of the braiding $R$ in $D^b(\sym(U)^m,\Sigma^m,\Vu)$  with operadic composition gives compatibility of $\underline{R}$ in $\PPP$, $\PPP_{\leq d}$ and $\PPP^{\leq d}$ with operadic composition. More precisely, for $n\in\NN$ and for every $l=1,\,\ldots,
n$ let $m_l\in \NN$ and $m:=\sum_{l=1}^nm_l$ and let $\gamma_l$ be a path in $E_2(m_l)$ from $\varphi_l$ to $\psi_l$. For any $\theta\in E_2(n)$, composing operadically with the constant path $\theta$, we obtain a path  $\widetilde{\gamma}=\theta\circ(\gamma_1,\,\ldots,\,\gamma_n)$ from $\eta=\theta\circ (\varphi_1,\,\ldots,\,\varphi_n)$ to $\zeta=\theta\circ (\psi_1,\,\ldots,\,\psi_n)$. 
Then, for $Y=\sym(U), \sym_{\leq d}(U)$ or $\sym^{\leq d}(U)$, and perverse sheaves $\FF_{lq}$ on $Y$, for $l=1,\,\ldots,\,n$ and $q=1,\,\ldots,\, m_l$, and denoting by $p_{lq}\colon Y^q\to Y$ the $l$-th projection, the braiding 
$\underline{R}_{\widetilde{\gamma},((\FF_{lq})_{q=1}^{m_{l}})_{l=1}^m}=R_{\widetilde\gamma, ((p_{lq}^*\FF_{lq})_{q=1}^{m_{l}})_{l=1}^m)}$,  is the composition 
\begin{equation*}
\begin{tikzcd}
\boxtimes_{\eta}((\FF_{lq})_{q=1}^{m_{l}})_{l=1}^m) \simeq \boxtimes_{\theta}(\boxtimes_{\varphi_l} (\FF_{lq})_{q=1}^{m_{l}})_{l=1}^n \arrow[rr,"\boxtimes_{\theta}(\underline{R}_{\gamma_l, (\FF_{l1},\,\ldots,\,\FF_{m_{l}})})_{l=1}^n"] &&\hskip0.2cm\boxtimes_{\theta}(\boxtimes_{\psi_l} (\FF_{lq})_{q=1}^{m_{l}})_{l=1}^n \simeq \boxtimes_{\zeta} ((\FF_{lq})_{q=1}^{m_{l}})_{l=1}^m)
\end{tikzcd}
\end{equation*} where $\underline{R}_{\gamma_l, (\FF_{l1},\,\ldots,\,\FF_{lm_{l}})}={R}_{\gamma_l, (p_{l1}^*\FF_{l1},\,\ldots,\,p_{lm_l}^*\FF_{lm_{l}})}$. 

\medskip

Similarly, if $\gamma'$ is a path in $E_2(n)$ from $\theta$ to $\theta'$, composing it operadically with the constant path $(\varphi_1,\,\ldots,\,\varphi_n)$ in $\prod_{l=1}^nE_2(m_l)$
gives a path $\widetilde{\gamma'}$ in $E_2(m)$ from $\eta$ to $\eta'=\theta'\circ (\varphi_1,\,\ldots,\,\varphi_n)$.  Then, $\underline{R}_{\widetilde{\gamma'},((\FF_{lq})_{q=1}^{m_{l}})_{l=1}^m}$
is the composition 
\begin{equation*}
\begin{tikzcd}
\boxtimes_{\eta}((\FF_{lq})_{q=1}^{m_{l}})_{l=1}^m) \simeq \boxtimes_{\theta}(\boxtimes_{\varphi_l} (\FF_{lq})_{q=1}^{m_{l}})_{l=1}^n \arrow[rr,"\underline{R}_{\gamma', (\FF_{11},\,\ldots,\,\FF_{nm_{n}})}\;"] &&\boxtimes_{\theta}(\boxtimes_{\varphi_l} (\FF_{lq})_{q=1}^{m_{l}})_{l=1}^n \simeq \boxtimes_{\eta'} ((\FF_{lq})_{q=1}^{m_{l}})_{l=1}^m)
\end{tikzcd}
\end{equation*}}
\end{remark}

\begin{remark}\label{rem:functors-monoidal}
{\rm  The categories $\mathcal{P}$, $\PPP^{\leq d}$  
and  $\mathcal{P}_{\leq d}$ for each $d$ 
 are full braided monoidal subcategories of the braided monoidal category $\mathcal{P}erv(\phantom{a}, \mathcal{V})$ of perverse sheaves with values in $\Vu$ as in Remark \ref{rem:D-monoidal}. The $\boxtimes$ tensor products endow the categories 
$\mathcal{P}_{\leq d}$, $\mathcal{P}$ and $\mathcal{P}^{\leq d}$ with structures of $\Pi_1 E_2$-algebras. Lemma \ref{lem:many-tensors}  guarantees that the restriction and extension functors are monoidal, i.e., morphisms of 
$\Pi_1 E_2$-algebras.}
\end{remark}

\subsubsection{The functors $a_\varphi^\ast$}\label{sec:a}
Let $d,n\in \NN$ \and let $\varphi\in E_2(n)$. Pulling back along the multiplication embeddings $a_{\varphi}$,
$a_{\varphi,\leq d}$ and $a_{\varphi}^{\leq d}$ defined in \eqref{a-phi}, \eqref{a-phi-leq} and \eqref{a-phi-leq-alto}
yields respectively functors 
\begin{equation}\label{eq:aphi*}
a_{\varphi}^\ast : \mathcal{P}(1)
\rightarrow
\mathcal{P}(n),\quad 
a_{\varphi,\leq d}^\ast : \mathcal{P}_{\leq d}(1)
\rightarrow
\mathcal{P}_{\leq d}(n), \textrm{ and }  (a_{\varphi}^{\leq d})^\ast : \mathcal{P}^{\leq d}(1)
\rightarrow
\mathcal{P}^{\leq d}(n).\end{equation}

By \eqref{eq:equivariance-a-phi}, for any $\sigma\in {\mathbb S}_n$ there holds: 
\begin{equation}\label{eq:a-phi*-equivariance}
a_{\varphi^\sigma}^*=\sigma^*\circ a^*_\varphi,\quad (a_{\varphi^\sigma,\leq d})^*=\sigma^*\circ a_{\varphi,\leq d}^\ast\mbox{ and } (a_{\varphi^{\sigma}}^{\leq d})^\ast=\sigma^*\circ (a_{\varphi}^{\leq d})^\ast.\end{equation}
 More generally, we define the functors

\[
(\Pi_{l=1}^n a_{\psi_l})^\ast: \mathcal{P}(n) 
\rightarrow
\mathcal{P}(m),\quad (\Pi_{l=1}^n a_{\psi_l,\leq d})^\ast: \mathcal{P}_{\leq d}(n) 
\rightarrow
\mathcal{P}_{\leq d}(m),\] and \[(\Pi_{l=1}^n a_{\psi_l}^{\leq d})^\ast: \mathcal{P}^{\leq d}(n) 
\rightarrow
\mathcal{P}^{\leq d}(m)
\]
for $\psi_i\in E_2(m_l)$ for $l=1,\,\ldots n,$ and $\sum_{l=1}^n m_l = m$. 

\medskip

\begin{lem}\label{lem:b-passa}
Let $\varphi\in E_2(n)$ and, for $l=1,\,\ldots,\,n$, let $\psi_l\in E_2(m_l)$, with $m_l\in\NN$. Then, setting 
$\eta := \varphi \circ (\psi_1,\,\ldots,\,\psi_n)$, one has 
\begin{equation}\label{eq:composition-a}
a_\eta^\ast = 
(\prod_{l=1}^n a_{\psi_l})^\ast a_{\varphi}^\ast,
\quad
a_{\eta,\leq d}^\ast = 
(\prod_{l=1}^n a_{\psi_l,\leq d})^\ast a_{\varphi,\leq d}^\ast \ 
\mathrm{ and }\  
(a_\eta^{\leq d})^\ast = 
(\prod_{l=1}^n a_{\psi_l})^{\leq d,\ast }(a_{\varphi}^{\leq d})^\ast. \end{equation}
Furthermore, for $\FF\in \PPP(1)$, $\GG\in\PPP_{\leq d}(1)$ and ${\mathcal E}\in\PPP^{\leq d}(1)$ there holds
\begin{equation*}
\boxtimes_\varphi(a^*_{\psi_l}\FF)_{l=1}^n=(\prod_{l=1}^na_{\psi_l})^*\FF^{\boxtimes_\varphi n},\quad
\boxtimes_\varphi(a^*_{\psi_l,\leq d}\GG)_{l=1}^n=(\prod_{l=1}^na_{\psi_l,\leq d})^*\GG^{\boxtimes_\varphi n},\end{equation*}
and
\begin{equation*}\boxtimes_\varphi((a_{\psi_l}^{\leq d})^*{\mathcal E})_{l=1}^n=(\prod_{l=1}^na_{\psi_l}^{\leq d})^*\mathcal E^{\boxtimes_\varphi n}.
\end{equation*}
 \end{lem}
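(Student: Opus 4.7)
The plan is to deduce both parts of the lemma by unwinding the definitions and exploiting two ingredients: the operadic composition diagram \eqref{eq:symemb} for the maps $a_\varphi$, and the fact that the pull-back along a stratified morphism is a braided monoidal functor, as recalled in \S\ref{tensprod}. I will treat the full case over $\sym(U)$; the cases over $\sym_{\leq d}(U)$ and $\sym^{\leq d}(U)$ are obtained by restricting along the appropriate open embeddings and (for the truncated version) applying $(-)^{\leq d}$, which is an exact pull-back and therefore does not interfere with the arguments.

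First I would handle the identity $a_\eta^\ast = (\prod_{l=1}^n a_{\psi_l})^\ast a_\varphi^\ast$. This is a straightforward consequence of the contravariant functoriality of pull-back applied to the diagram \eqref{eq:symemb}, which asserts exactly the equality $a_\eta = a_\varphi \circ (\prod_{l=1}^n a_{\psi_l})$ at the level of stratified spaces; the restrictions to $\sym_{\leq d}(U)$ and to $\sym^{\leq d}(U)$ follow by the analogous commutative diagrams obtained from applying the functor $\sym_{\leq d}(-)$ and by restricting to the open subset $\sym(U)^{\bullet}({\leq d})$, respectively.

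For the second set of identities, let $p_l\colon \sym(U)^n\to \sym(U)$ denote the $l$-th projection and $q_l\colon \prod_{l=1}^n \sym(U)^{m_l}\to \sym(U)^{m_l}$ the $l$-th projection in the product. The essential observation is the equality $p_l\circ \bigl(\prod_{l=1}^n a_{\psi_l}\bigr)=a_{\psi_l}\circ q_l$, which gives $\bigl(\prod_{l=1}^n a_{\psi_l}\bigr)^\ast p_l^\ast = q_l^\ast a_{\psi_l}^\ast$ by functoriality of pull-back. Unwinding the definition of $\boxtimes_\varphi$ as $\otimes_\varphi$ applied to pull-backs along the projections, the right-hand side $\bigl(\prod_{l=1}^n a_{\psi_l}\bigr)^\ast \FF^{\boxtimes_\varphi n}$ equals $\bigl(\prod_{l=1}^n a_{\psi_l}\bigr)^\ast \otimes_\varphi(p_1^\ast\FF,\ldots,p_n^\ast\FF)$; applying that $\bigl(\prod_{l=1}^n a_{\psi_l}\bigr)^\ast$ is braided monoidal, this is naturally isomorphic to $\otimes_\varphi\bigl(q_1^\ast a_{\psi_1}^\ast\FF,\ldots,q_n^\ast a_{\psi_n}^\ast\FF\bigr)$, which by definition is $\boxtimes_\varphi(a_{\psi_l}^\ast\FF)_{l=1}^n$.

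The main (modest) obstacle is the careful handling of the identifications coming from braided monoidality of pull-back, in the spirit of Lemma~\ref{lem:equivarianceRbox}: one needs to verify that the natural transformations making $\bigl(\prod_{l=1}^n a_{\psi_l}\bigr)^\ast$ commute with $\otimes_\varphi$ are the ones implicit in the definition of $\boxtimes_\varphi$. The analogous identities for $\GG\in\PPP_{\leq d}(1)$ and $\mathcal E\in\PPP^{\leq d}(1)$ follow by exactly the same computation, using the compatibility of $\boxtimes_\varphi$ with $(j_{\leq d})^\ast$ and $(\phantom{a})^{\leq d}$ established in Lemma~\ref{lem:treni-tensor}, together with the evident fact that $a_{\psi_l,\leq d}$ and $a_{\psi_l}^{\leq d}$ are the restrictions of $a_{\psi_l}$ to the corresponding open subsets.
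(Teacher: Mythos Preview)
Your proposal is correct and follows essentially the same route as the paper: both deduce the first set of identities from diagram~\eqref{eq:symemb} and functoriality of pull-back, and for the second set both use the key commutativity $p_l\circ(\prod_l a_{\psi_l})=a_{\psi_l}\circ q_l$ together with the fact that pull-back commutes with $\otimes_\varphi$ (the paper phrases this as ``transitivity'' citing \cite[\S 5.8]{dCM}, while you invoke braided monoidality of the pull-back, which amounts to the same thing here). Your extra remark about tracking the coherence isomorphisms is a reasonable caution that the paper simply elides.
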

\begin{proof}
The first set of equalities follows from \eqref{eq:symemb} and functoriality of the pull-back, the second one follows from the definition of external tensor product. Indeed, for every $q=1,\,\ldots,n$ the $q$-th projections $p_{q}\colon \sym(U)^n\to\sym(U)$ and  $p'_{q}\colon \prod_{l=1}^n\sym(U)^{m_l}\to\sym(U)^{m_q}$
satisfy $p_q\circ \prod_{l=1}^n a_{\psi_l}=a_{\psi_q}\circ p'_q$. Hence, $(\prod_{l=1}^n a_{\psi_l})^*\circ p_q^*=(p'_q)^*\circ a_{\psi_q}^*$ so 
\begin{equation*}\boxtimes_{\varphi} (a^*_{\psi_l}\FF)_{l=1}^n=\otimes_{\varphi} (\prod_{l=1}^n a_{\psi_l})^*(p_q^*\FF)_{q=1}^n.\end{equation*} Now, $\otimes_{\varphi} (\prod_{l=1}^n a_{\psi_l})^*=(\prod_{l=1}^n a_{\psi_l})^*\otimes_{\varphi}$ by transitivity \cite[\S 5.8]{dCM}, giving the first of the three equalities. The result for the restrictions or truncations of the $a_{\psi_l}$'s follows in a similar fashion.
 \end{proof}

\begin{lem}\label{lem:a-treno}
Let $c,d,e\in\NN$ with $c\geq d\geq e\geq 0$, let $n\in\NN_{\geq1}$ and $\varphi\in E_2(n)$. Let $\bullet$ stand for $*$, $!$ or $!*$.
Then the following diagrams of functors are commutative
\begin{equation}\label{dgm:upperstar}
\begin{tikzcd}[row sep=huge]
 \mathcal{P}(1)
\arrow[d, swap, "a_\varphi^\ast"] 
\arrow[rr, swap, "(j_{\leq c})^\ast"']& 
&  \mathcal{P}_{\leq c}(1) \arrow[d, "(a_{\varphi,\leq c})^\ast"] 
\arrow[rr, swap, "(j_{\leq d,\leq c})^\ast"'] &&  \mathcal{P}_{\leq d}(1) \arrow[d, "(a_{\varphi,\leq d})^\ast"] 
\arrow[rr, swap, "(\phantom{a})^{\leq d}"'] &&
\mathcal{P}^{\leq d}(1) \arrow[d, "(a_\varphi ^{\leq d})^\ast"]\arrow[rr, swap, "(\phantom{a})^{\leq e,\leq d}"']&& \mathcal{P}^{\leq e}(1) \arrow[d, "(a_{\varphi}^{\leq e})^\ast"] \\
\mathcal{P}(n) \arrow[rr, "(j_{\leq c}^n)^\ast"']
&&\mathcal{P}_{\leq c}(n) \arrow[rr, "(j_{\leq d,\leq c}^n)^\ast"']&&
\mathcal{P}_{\leq d}(n) \arrow[rr, "(\phantom{a})^{\leq d}"']&&\mathcal{P}^{\leq d}(n)\arrow[rr,  "(\phantom{a})^{\leq e,\leq d}"']&&\mathcal{P}^{\leq e}(n)
\end{tikzcd} 
\end{equation}

\begin{equation}\label{dgm:lower*and!}
\begin{tikzcd}[row sep=huge]
 \mathcal{P}(1)
\arrow[d, "a_\varphi^\ast"] 
& 
&  \mathcal{P}_{\leq c}(1) \arrow[ll,  "^p(j_{\leq c})_\bullet"']\arrow[d, "(a_{\varphi\leq c})^\ast"] 
 &&  \mathcal{P}_{\leq d}(1) \arrow[d, "(a_{\varphi\leq d})^\ast"] 
\arrow[ll, "^p(j_{\leq d,\leq c})_\bullet"']\\
\mathcal{P}(n)
&&\mathcal{P}_{\leq c}(n)  \arrow[ll, swap,"^p(j_{\leq c}^n)_\bullet"'] &&
\mathcal{P}_{\leq d}(n) \arrow[ll, swap,"^p(j_{\leq d,\leq c}^n)_\bullet"']
\end{tikzcd} 
\end{equation}
\end{lem}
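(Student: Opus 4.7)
The plan is to deduce commutativity of \eqref{dgm:upperstar} and \eqref{dgm:lower*and!} from commutativity of the associated diagrams of underlying stratified spaces, combined with base change and $t$-exactness of pull-backs along open immersions.

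For \eqref{dgm:upperstar}, every functor involved is a $(-)^\ast$ pull-back, so by functoriality it suffices to verify commutativity of the underlying diagrams of spaces. The left-most squares reduce to the identities $a_\varphi\circ j_{\leq c}^n = j_{\leq c}\circ a_{\varphi,\leq c}$ and $a_{\varphi,\leq c}\circ j_{\leq d,\leq c}^n = j_{\leq d,\leq c}\circ a_{\varphi,\leq d}$, which are immediate from the definition of the restricted maps $a_{\varphi,\leq d}$ in \eqref{a-phi-leq}. The squares involving the truncation functors $(\phantom{a})^{\leq d}$ reduce similarly to the identity $a_\varphi\circ i^{\leq d}=i^{\leq d}\circ a_\varphi^{\leq d}$, which holds because $a_\varphi$ is a graded morphism and $a_\varphi^{\leq d}$ is by \eqref{a-phi-leq-alto} exactly its restriction to the degree-$\leq d$ component.

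For \eqref{dgm:lower*and!}, the key geometric input is that the square with horizontal arrows $a_{\varphi,\leq d},\, a_\varphi$ and vertical arrows $j_{\leq d}^n,\, j_{\leq d}$ is Cartesian, and analogously for the square involving $j_{\leq d,\leq c}$. Indeed, since the unary components of $\varphi\in E_2(n)$ have pairwise disjoint closures (Definition \ref{unary}), the multiplicities of the roots of $a_\varphi(P_1,\ldots,P_n)$ coincide with those of the respective $P_i$'s; hence $a_\varphi(P_1,\ldots,P_n)\in \sym_{\leq d}(U)$ if and only if each $P_i\in \sym_{\leq d}(U)$. Base change along this Cartesian square of open embeddings at the level of $D^b_{\! c}$ yields natural isomorphisms $a_\varphi^\ast\circ (j_{\leq d})_\bullet\simeq (j_{\leq d}^n)_\bullet\circ a_{\varphi,\leq d}^\ast$ for $\bullet=!,\,*$. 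Since $a_\varphi$ is an open embedding, $a_\varphi^\ast$ is $t$-exact for the perverse $t$-structure and therefore commutes with the perverse truncations ${}^p\tau_{\leq 0}$ and ${}^p\tau_{\geq 0}$, giving commutativity of \eqref{dgm:lower*and!} for $\bullet=!,\,*$.

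For $\bullet=!*$, since ${}^p(j_{\leq d})_{!\ast}\FF$ is by definition the image of the canonical morphism ${}^p(j_{\leq d})_!\FF\to {}^p(j_{\leq d})_\ast\FF$ and the exact functor $a_\varphi^\ast$ preserves images, the previous step identifies $a_\varphi^\ast\,{}^p(j_{\leq d})_{!\ast}\FF$ with ${}^p(j_{\leq d}^n)_{!\ast} a_{\varphi,\leq d}^\ast\FF$, as required; an entirely parallel argument handles the square with $j_{\leq d,\leq c}$. The main point requiring attention is the verification of the Cartesian property of the base-change squares, which rests on the disjoint-closure condition built into the definition of a linear embedding; once this is in place, the remaining commutativities follow from standard properties of the six-functor formalism together with the $t$-exactness of $a_\varphi^\ast$.
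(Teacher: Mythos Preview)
Your proof is correct and follows essentially the same route as the paper: functoriality of pull-backs for \eqref{dgm:upperstar}, base change plus $t$-exactness of $a_\varphi^\ast$ for \eqref{dgm:lower*and!} with $\bullet=!,*$, and exactness of $a_\varphi^\ast$ to pass to images for $\bullet=!*$. The one point where you go beyond the paper is your explicit verification that the base-change square is Cartesian (using the disjoint-closure condition on the unary components of $\varphi$); the paper simply invokes smooth base change without spelling this out, so your argument is in fact slightly more complete.
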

\begin{proof} By construction the functors $(\ )^{\leq d}$ and $(\ )^{\leq e,\leq d}$ are compatible with the pull-back along the restriction $a_{\varphi}^{\leq d}$ of $a_\varphi$ to $\sym(U)^n({\leq d})$ and similarly for the functors  $j_{\leq d}^\ast$  and $j_{\leq d,\leq c}^*$, giving commutativity of the first diagram. 

\medskip

Let ${\mathcal G}$ be in $\mathcal{P}_{\leq d}(1),\mathcal{P}_{\leq c}(1),\mathcal{P}^{\leq d}(1)$ or $\mathcal{P}^{\leq d}(1)$. Since $a_{\varphi}$ and its restrictions are smooth, by smooth base change \cite[p. 625]{dCM} we have $a^*_{\varphi}(j_{\leq d})_*(\mathcal{G})=(j^n_{\leq d})_{*}(a_{\varphi,\leq d})^*(\mathcal{G})$ and similarly for morphisms and for 
$(j_{\leq d,\leq c})_\ast$. 
In addition, $a^*_{\varphi}$ and its restrictions are $t$-exact because $a_\varphi$ is an open embedding, \cite[p. 612]{dCM}. Hence 
   \[ a^*_{\varphi} {^p(j_{\leq d})_\ast}{\mathcal G}=
    a^*_{\varphi} {^{p}\tau_{\leq 0}}(j_{\leq d})_*(\mathcal{G})= {^{p}\tau_{\leq 0}} a^*_{\varphi} (j_{\leq d})_*(\mathcal{G})= {^{p}\tau_{\leq 0}} (j^n_{\leq d})_{*}(a_{\varphi,\leq d})^*(\mathcal{G})={^p(j_{\leq d}^n)_\ast} (a_{\varphi,\leq d})^*{\mathcal G}\] and similarly for $(j_{\leq d,\leq c})_\ast$,
        obtaining commutativity of \eqref{dgm:lower*and!} with $\bullet=*$. 

\medskip

By base change \cite[p. 625]{dCM}  we also obtain $a^*_\varphi (j_{\leq d})_! 
\mathcal= (j^n_{\leq d})_! 
a^*_{\varphi\leq d}$ and similarly for $j_{\leq d,\leq c}$. As $a^*_{\varphi}$ and its restrictions are $t$-exact, the diagram \eqref{dgm:lower*and!} is commutative also for $\bullet=!$. 

\medskip

Combining with commutativity of \eqref{dgm:lower*and!} for $\bullet=*$  and exactness of $a^*_{\varphi}$ and its restrictions, we have commutativity also for $\bullet=!*$ (removing the unnecessary ${}^p$). \end{proof}

\medskip

More generally, functors $a_\varphi^*$ for $\varphi\in E_2(n)$ and their restrictions behave well with respect to products of spaces.

\begin{lem}\label{lem:a-tanti}
For $c,d\in\NN$ with $d\leq c$ let ${\mathbf j}\colon Y\to Y'$ be any of the open embeddings $j_{\leq c}$, $j_{\leq c,\leq d}$, $i^{\leq c,\leq d}$, or $i^{\leq d}$ from \eqref{eq:embedding-j} and \eqref{eq:inclusions-i}. Let $n,m_l\in\NN_{\geq1}$ for $l=1,\,\ldots, n$ and $m=\sum_{l=1}^nm_l$, let $\varphi\in E_2(n)$, $\psi_l\in E_2(m_l)$, and let $b'_{\varphi}\colon (Y')^n\to Y'$, and $b_{\varphi}\colon Y^n\to Y$,
respectively, $b'_{\psi_l}\colon (Y')^{m_l}\to Y'$ and  $b_{\psi_l}\colon (Y)^{m_l}\to Y$ be the  restrictions or truncations of  $a_{\varphi}$, respectively $a_{\psi_l}$,  to $(Y')^n$ and $Y^n$, respectively $(Y')^{m_l}$ and $Y^{m_l}$ as in \eqref{a-phi}, \eqref{a-phi-leq} or \eqref{a-phi-leq-alto}.
 Then the following diagram commutes.      
\begin{equation}\label{dgm:product-tanti}
\begin{tikzcd}[row sep=huge]
 \mathcal{PS}((Y')^n,\Sigma^n,\Vu)
\arrow[rr, "(\prod_{l=1}^n b'_{\psi_l})^*"] 
\arrow[d, "({\mathbf j}^n)^\ast"']& 
& \mathcal{PS} ((Y')^m,\Sigma^m,\Vu)\arrow[d, swap, "({\mathbf j}^{m})^\ast"']
\\
\mathcal{PS} (Y^n,\Sigma^n,\Vu)\arrow[rr, "(\prod_{l=1}^n b_{\psi_l})^\ast"] 
 & &
\mathcal{PS} (Y^m,\Sigma^m,\Vu)
\end{tikzcd} \end{equation}
Let $\bullet$ be $!$, $*$ or $!*$. Then, for ${\mathbf j}\colon Y\to Y'$ any of the embeddings $j_{\leq c}$ or $j_{\leq c,\leq d}$ from \eqref{eq:embedding-j} the following diagram commutes
 \begin{equation}\label{dgm:big1}
    \begin{tikzcd}[row sep=huge]
     \mathcal{PS}(Y^n,\Sigma^n,\Vu) 
    \arrow[rr, "(\prod_{l=1}^n b_{\psi_l})^\ast"] 
    \arrow[d, "{}{^p}(\mathbf j^n)_\bullet"']& 
    & \mathcal{PS}(Y^m,\Sigma^m,\Vu)\arrow[d,swap, "{^p}(\mathbf j^{m})_\bullet"']
    \\
   \mathcal{PS}((Y')^n,\Sigma^n,\Vu)\arrow[rr, "(\prod_{l=1}^n b'_{\psi_l})^*"] 
    & &
    \mathcal{PS}((Y')^m,\Sigma^m,\Vu) 
    \end{tikzcd} 
    \end{equation}
%\hskip1cm\textrm{ and }\hskip1cm
\end{lem}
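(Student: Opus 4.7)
The plan is to model the argument on that of Lemma~\ref{lem:a-treno}, reducing both commutativities to smooth base change along cartesian squares of open embeddings combined with $t$-exactness of pull-back along open embeddings. The central geometric observation is that, in each of the cases considered, the natural square of stratified spaces
\[
\begin{tikzcd}
Y^m \arrow[r, "\prod_{l=1}^n b_{\psi_l}"] \arrow[d, "\mathbf{j}^m"'] & Y^n \arrow[d, "\mathbf{j}^n"] \\
(Y')^m \arrow[r, "\prod_{l=1}^n b'_{\psi_l}"'] & (Y')^n
\end{tikzcd}
\]
is commutative, cartesian, and consists of four open embeddings. Commutativity follows from the defining property of each $b_{\psi_l}$ as the restriction or truncation of $a_{\psi_l}$ compatibly with $\mathbf{j}$; the cartesian property must be checked according to the choice of $\mathbf{j}$ in \eqref{eq:embedding-j} and \eqref{eq:inclusions-i}, but in each case it reduces to the elementary observation that, since the little cubes $\psi_l(U)$ have pairwise disjoint closures, the combined multiset of roots satisfies the bound defining $Y^n$ inside $(Y')^n$ if and only if each input factor does.

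Given this, the commutativity of \eqref{dgm:product-tanti} is immediate from functoriality of the pull-back applied to the two compositions around the square. For \eqref{dgm:big1}, I plan to invoke smooth base change \cite[p.~625]{dCM} along this cartesian square: since $\prod_{l=1}^n b'_{\psi_l}$ is a product of open embeddings, hence itself an open embedding, for $\bullet \in \{\ast, !\}$ base change yields
\[
(\prod\nolimits_{l=1}^n b'_{\psi_l})^\ast \, (\mathbf{j}^n)_\bullet = (\mathbf{j}^m)_\bullet \, (\prod\nolimits_{l=1}^n b_{\psi_l})^\ast
\]
at the level of the bounded constructible derived categories. The functor $(\prod_{l=1}^n b'_{\psi_l})^\ast$ is $t$-exact \cite[p.~612]{dCM}, being the pull-back along an open embedding, and therefore commutes with both perverse truncations ${}^p\tau_{\leq 0}$ and ${}^p\tau_{\geq 0}$. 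Combined with the descriptions ${}^p(\mathbf{j}^n)_\ast = {}^p\tau_{\leq 0}(\mathbf{j}^n)_\ast$ and ${}^p(\mathbf{j}^n)_! = {}^p\tau_{\geq 0}(\mathbf{j}^n)_!$ recalled in Section~\ref{sec:pervcat}, this yields commutativity of \eqref{dgm:big1} for $\bullet = \ast$ and $\bullet = !$.

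The case $\bullet = !\ast$ will follow from these two by recalling that ${}^p(\mathbf{j}^n)_{!\ast}\mathcal{F}$ is defined as the image of the canonical morphism ${}^p(\mathbf{j}^n)_!\mathcal{F} \to {}^p(\mathbf{j}^n)_\ast\mathcal{F}$; since $(\prod_{l=1}^n b'_{\psi_l})^\ast$ is exact on perverse sheaves it commutes with images, and the naturality of the canonical morphism then finishes the argument. The main obstacle I foresee is purely bookkeeping: the statement consolidates several different open embeddings $\mathbf{j}$ and several product morphisms, so the cartesian property of the square above must be verified separately for each pairing. Since every such verification reduces to the elementary multiplicity/degree identity indicated above, no genuinely new conceptual difficulty arises beyond what is already resolved in Lemma~\ref{lem:a-treno}.
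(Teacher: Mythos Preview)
Your proposal is correct and takes essentially the same approach as the paper, whose proof consists of the single sentence that the arguments of Lemma~\ref{lem:a-treno} apply also to this case. You have spelled out exactly those arguments---commutativity and cartesianness of the underlying square of open embeddings, smooth base change, $t$-exactness of pull-back along open embeddings, and passage to images for $\bullet=!*$---so there is nothing to add.
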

\begin{proof}The arguments used to prove Lemma \ref{lem:a-treno} apply also to this case.
\end{proof}

\subsubsection{Monodromy} The arguments in Section \ref{mndrmy}, from which we retain notation, ensure existence of an isomorphism of functors given by monodromy along paths, that we now explain. 
\begin{lem}\label{lem:mndrmy}
    Let $n\geq 1$, let $\varphi$ and $\psi$ in $E_2(n)$, let $\gamma$ be a continuous path in $E_2(n)$ joining $\varphi$ to $\psi$, and let $a_\varphi^*$ and $a_{\psi}^*$ be the functors $\mathcal{P}(1)\to{\mathcal P}(n)$. Then, there is a natural monodromy isomorphism of functors
    \begin{equation}\label{eq:emmegamma}
  M_\gamma\colon a^\ast_\varphi \rightarrow  a^\ast_{\psi},
    \end{equation}
 which  depends only on the homotopy class of the path, inducing   isomorphisms of functors
  \begin{equation}\label{eq:emmegamma-d}
  M_\gamma\colon a_{\varphi,\leq d}^\ast \rightarrow  a_{\psi,\leq d}^\ast,\quad  M_\gamma\colon (a_\varphi^{\leq d})^\ast\rightarrow  (a_{\psi}^{\leq d})^\ast
    \end{equation}
    by restriction and truncation. 
The monodromy isomorphisms $M_\gamma$ for each path $\gamma$ in $E_2(n)$ are compatible with composition of paths in $E_2(n)$  and satisfy $M_{\gamma^\sigma}=\sigma^*(M_{\gamma})$ for any $\sigma\in\Sn$.  
\end{lem}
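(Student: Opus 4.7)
The plan is to realize $M_\gamma$ as an instance of the monodromy isomorphism of Proposition \ref{lem:equiv2}, applied to the one-parameter family of open embeddings $(a_{\gamma(t)})_t$. Extending $\gamma$ slightly to an open interval $(a,b)\supset[0,1]$, I would assemble the family into a single map
\begin{equation*}
\alpha_\gamma\colon \sym(U)^n\times (a,b)\longrightarrow \sym(U),\qquad ((s_1,\ldots,s_n),t)\mapsto a_{\gamma(t)}(s_1,\ldots,s_n).
\end{equation*}
Continuity follows from continuity of $\gamma$ and of operadic composition on $E_2$. For stratification, the crucial point is that $\gamma(t)\in E_2(n)$ for every $t$, so the $n$ little cubes making up $\gamma(t)$ have pairwise disjoint closures throughout the path; thus the multiplicity pattern of $\alpha_\gamma((s_1,\ldots,s_n),t)$ coincides with the one obtained by juxtaposition of the patterns of the $s_i$'s, and $\alpha_\gamma$ sends the product of the diagonal stratifications on $\sym(U)^n\times(a,b)$ into the diagonal stratification on $\sym(U)$. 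Choosing $u,v\in(a,b)$ with $\gamma(u)=\varphi$ and $\gamma(v)=\psi$, Proposition \ref{lem:equiv2} applied to $\alpha_\gamma$ yields the natural isomorphism $M_\gamma\colon a_\varphi^\ast=(\alpha_\gamma)_u^\ast\to(\alpha_\gamma)_v^\ast=a_\psi^\ast$.

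Next I would verify the three compatibilities. For homotopy invariance, a homotopy $H\colon[0,1]^2\to E_2(n)$ relative to endpoints assembles analogously into a two-parameter stratified map $\alpha_H\colon\sym(U)^n\times(a,b)^2\to\sym(U)$, and Proposition \ref{lem:equiv2} with $k=2$ gives a natural isomorphism $(\alpha_H)_u^\ast\to(\alpha_H)_v^\ast$ that factors through either endpoint of the square, identifying the monodromies of $\gamma$ and $\gamma'$. For compatibility with concatenation $\gamma_2\ast\gamma_1$, the same construction on a subdivided interval produces two natural isomorphisms $a_\varphi^\ast\to a_\psi^\ast$ and $a_\psi^\ast\to a_\chi^\ast$ whose composition is, by the naturality of the identifications in \eqref{eq:natural}, the monodromy of $\gamma_2\ast\gamma_1$. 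For $\Sn$-equivariance, the identity $a_{\varphi^\sigma}=a_\varphi\circ\sigma$ from \eqref{eq:equivariance-a-phi} gives $\alpha_{\gamma^\sigma}=\alpha_\gamma\circ(\sigma\times\id_{(a,b)})$, hence $(\alpha_{\gamma^\sigma})_u^\ast=\sigma^\ast\circ(\alpha_\gamma)_u^\ast$; functoriality of $\sigma^\ast$ on the defining morphism in $\mathrm{Hom}(\pi^\ast\alpha_u^\ast\FF,\alpha^\ast\FF)$ translates under \eqref{eq:natural} into the equality $M_{\gamma^\sigma}=\sigma^\ast(M_\gamma)$.

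The restricted and truncated versions \eqref{eq:emmegamma-d} follow from Remark \ref{rem:open-embedding}(1). Restricting the source of $\alpha_\gamma$ to $\sym_{\leq d}(U)^n$ produces a stratified map $\alpha_{\gamma,\leq d}$ whose fibers at $u$ and $v$ are precisely $a_{\varphi,\leq d}$ and $a_{\psi,\leq d}$, and post-composing with the open embedding $j_{\leq d}$ into $\sym(U)$ on the target does not change the resulting monodromy. The analogous argument on $\sym(U)^n(\leq d)$, followed by the truncation functor $(\phantom{a})^{\leq d}$, handles the $(a_\varphi^{\leq d})^\ast$ case.

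The main obstacle is the verification that $\alpha_\gamma$ is stratified with respect to the product of the diagonal stratifications: this hinges entirely on the fact that the little-cubes embeddings remain in $E_2(n)$ throughout the path, so their closures never meet and multiplicities cannot migrate between the $n$ factors. Once this is in place, everything else is a routine unpacking of Proposition \ref{lem:equiv2} together with functoriality of pull-back and the equivariance \eqref{eq:equivariance-a-phi}.
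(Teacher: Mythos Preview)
Your proposal is correct and follows essentially the same route as the paper: assemble the path into a stratified map $\alpha_\gamma\colon\sym(U)^n\times(a,b)\to\sym(U)$ (the paper extends $\gamma$ by constants outside $[0,1]$), invoke Proposition~\ref{lem:equiv2} for the construction, use a two-parameter family for homotopy invariance and the resulting composition law for concatenation, and deduce equivariance from $a_{\varphi^\sigma}=a_\varphi\circ\sigma$. The paper makes the composition formula $M_{\theta,u,w}=M_{\theta,v,w}\circ M_{\theta,u,v}$ explicit via a commutative triangle before applying it, whereas you appeal directly to naturality in~\eqref{eq:natural}; both arguments amount to the same thing.
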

\begin{proof}
We see the path $\gamma$ as a continuous map $U^{\coprod n}\times [0,1]\to U$ such that for every $t\in [0,1]$  the map  $\gamma_t:=\gamma(-,t)\colon U^{\coprod n}\rightarrow U$ is in $E_2(n)$, with $\gamma(-,0)=\varphi$ and $\gamma(-,1)=\psi$. We can extend $\gamma$ to a continuous function $U^{\coprod n}\times (a,b)\to U$ for some $a<0$ and $b>1$, by taking $\gamma(-,t)=\varphi$ for $t<0$ and $\gamma(-,t)=\psi$ for $t>1$.  Then $\gamma$ induces a stratified map $a_{\gamma}\colon \sym(U)^n  \times (a,b) \rightarrow \sym(U)$ defined by $a_\gamma(-,t)=a_{\gamma(-,t)}$ for any $t\in (a,b)$. Proposition \ref{lem:equiv2} 
with $u=0$, $v=1$, $\alpha=a_{\gamma}$ gives a natural isomorphism  between the functors 
\begin{align*}a_{\gamma(-,0)}^*\colon D_c^b(\sym(U),\Sigma,\Vu)&\to D_c^b(\sym(U)^n,\Sigma^n,\Vu) \ {\rm and}\\
a_{\gamma(-,1)}^*\colon D_c^b(\sym(U),\Sigma,\Vu)&\to D_c^b(\sym(U)^n,\Sigma^n,\Vu).\end{align*} Since $a_\varphi^*$ and $a_\psi^*$ map the full subcategory $\PPP(1)$ of $D_c^b(\sym(U),\Sigma,\Vu)$ to the full subcategory $\PPP(n)$ of $D_c^b(\sym(U)^n,\Sigma^n,\Vu)$, the sought monodromy isomorphism  
$M_\gamma\colon a_\varphi^*=a_{\gamma(-,0)}^*\to a_{\gamma(-,1)}^*=a_{\psi}^*$ is induced from the monodromy isomorphism at the level of derived categories. Restriction to $\sym_{\leq d}(U)^n$ and truncation to $\sym(U)^n(\leq d)$ give the other monodromy isomorphisms.
  
\medskip

In order to prove that $M_\gamma$ depends only on the homotopy class of $\gamma$, we consider another path $\gamma'$ in $E_2(n)$ joining $\varphi$ to $\psi$ and homotopic to $\gamma$ via the continuous function $\theta: U^{\coprod n}\times (a,b)^2\rightarrow U$, such that $\theta(-,t,s)\in E_2(n)$ for any $s,t\in(a,b)$, and satisfying 
\begin{align*}&\theta(-,t,0)=\varphi(-), &&\theta(-,t,1)=\psi(-), &&\forall t\in(a,b), \\
&\theta(-,0,-)=\gamma(-,-), &&\theta(-,1,-)=\gamma'(-,-).\end{align*}
The map $\theta$ gives rise to a stratified map $a_\theta\colon \sym(U)^n\times(a,b)^2\to \sym(U)$ and Proposition \ref{lem:equiv2}, from which we adopt notation,   guarantees existence of the natural isomorphisms of functors $M_{\theta,u,v}\colon i_u^* a_\theta^*\to i_v^*a_\theta^*$ for every $u,v\in (a,b)^2$ defined as $i_v^*\alpha_{\theta,u}$ where for any $\FF\in\PPP(1)$, the isomorphism \[\alpha_{\theta,u,\FF}\colon\pi^*i_u^*a_\theta^*\FF\to a_\theta^*\FF\] is uniquely determined by the requirement that 
$i_u^*\alpha_{\theta,u,\FF}=\id_{i_u^*a_\theta^*\FF}$. Now, for any $s,t\in (a,b)$, taking $u=(s,0)$ and $v=(t,0)$ gives  $i_{(s,0)}^*a_\theta^*=a_\varphi^*=i_{(t,0)}^*a_\theta^*$ and by construction $M_{\theta,(s,0),(t,0)}=\id_{a_\varphi^*}$. Similarly, taking $u=(s,1)$ and $v=(t,1)$ gives $i_{(s,1)}^*a_\theta^*=a_\psi^*=i_{(t,1)}^*a_\theta^*$
and $M_{\theta,(s,1),(t,1)}=\id_{a_\psi^*}$.
Let $\iota_0\colon \sym(U)^n\times(a,b)\to \sym(U)^n\times (a,b)^2$ be given by $\iota_0(x,t)=(x,0,t)$.  Applying the fully faithful functor $\iota_0^*$ to $\alpha_{\theta,(0,0)}$ it is easily verified that $M_\gamma=M_{\theta,(0,0),(0,1)}$. Similarly, one checks that $M_{\gamma'}=M_{\theta,(1,0),(1,1)}$.

\medskip 

We claim that $\forall u,v,w\in (a,b)^2$ the following equality holds:
\begin{equation}\label{eq:comp-monodromy}
M_{\theta,u,w}=M_{\theta,v,w}\circ M_{\theta,u,v}.
\end{equation} Indeed, for any $\FF\in\PPP(1)$
the diagram below commutes
\begin{equation}\label{eq:triangle-monodromy}
\begin{tikzcd}%[row sep=huge]
\pi^*i_u^*a_\theta^*\FF\arrow[dd, swap,"\pi^*i_v^*\alpha_{\theta,u,\FF}"] \arrow[rrdd, "\alpha_{\theta,u,\FF}"] \\
&&\\
\pi^*i_v^*a_\theta^*\FF\arrow[rr,"\alpha_{\theta,v,\FF}"]&&
a_\theta^*\FF
\end{tikzcd} 
\end{equation} because it commutes after application of the fully faithful functor $i_v^*$. Then, \eqref{eq:comp-monodromy} follows by applying the fully faithful functor $i_w^*$. Applying  \eqref{eq:comp-monodromy}  to the triples  $u=(0,0)$, $v=(0,1)$, $w=(1,1)$ and 
 $u=(0,0)$, $v=(1,0)$, $w=(1,1)$ gives the independence from homotopy. 

\medskip

Compatibility with composition of paths follows from \eqref{eq:comp-monodromy}.  We now show equivariance. Let $\sigma\in\Sn$ and let $\gamma^\sigma$ be the path from $\varphi^\sigma$ to $\psi^\sigma$. Then $a_{\varphi^\sigma}=a_\varphi\circ\sigma$ and $a_{\psi^\sigma}=a_\psi\circ\sigma$ and also $a_{\gamma^\sigma}=a_\gamma\circ (\sigma,\id)$, and finally $\sigma\pi=\pi\circ (\sigma,\id)$. Hence, for any $\FF\in D^b_c(\sym(U)^n,\Sigma,\Vu)$ the map corresponding to $\id\colon a_{\varphi^\sigma}^*\FF\to a_{\varphi^\sigma}^*\FF$ through the chain of identifications as in \eqref{eq:natural} with $\alpha=a_{\gamma^\sigma}$ is $\sigma^*( M_{\gamma,\FF})$.   
\end{proof}

\begin{remark}\label{rem:monodromy-product}
{\rm In a similar fashion, for $n, m_l\in \NN_{\geq 1}$, with $l=1,\,\ldots,n$, one defines a monodromy isomorphism for any path $\underline\gamma=(\gamma_1,\,\ldots,\,\gamma_n)$ in $\prod_{l=1}^nE_2(m_j)$, given by $\underline{\gamma}(t)=(\gamma_1(t),\,\ldots,\,\gamma_n(t))$ for $t\in [0,1]$, where each $\gamma_l\colon [0,1]\to E_2(m_l)$ is a path in $E_2(m_l)$. The monodromy isomorphism is then a natural transformation  $M_{\underline\gamma}$ between the functors 
$(\prod_{l=1}^n a_{\gamma_l(0)})^*$ and $(\prod_{l=1}^n a_{\gamma_l(1)})^*$
or their restrictions, and enjoys similar properties as the one induced by paths in $E_2(n)$.}
\end{remark}

From now on, when clear from the context, for any $\varphi\in E_2(n)$ and any $d,n\in\NN$, the symbol $b_\varphi$ will indicate any of the open embeddings $a_\varphi$,  $a_{\varphi,\leq d}$, or $a_\varphi^{\leq d}$, between the appropriate spaces and $b_\varphi^*$ will indicate the corresponding pull-back functor between the appropriate categories of perverse sheaves.\label{page:b}

\medskip

\begin{lem}\label{lem:compat-a-M}
Let $\varphi,\varphi'\in E_2(n)$ and, for $l=1,\,\ldots,\,n$, let $\psi_l\in E_2(m_l)$ and $m:=\sum_{l=1}^nm_l$. Let  $\gamma\colon U^{\coprod{} n}\times[0,1]\to U$  be a path in $E_2(n)$ from $\varphi$ to $\varphi'$ and $M_\gamma\colon a_\varphi^*\to a_{\varphi'}^*$ be the corresponding monodromy isomorphism. Let also  $\eta := \varphi \circ (\psi_1,\,\ldots,\,\psi_n)$ and $\eta' := \varphi' \circ (\psi_1,\,\ldots,\,\psi_n)$, and  $\widetilde{\gamma}\colon U^{\coprod{}m}\times [0,1]\to U$ be the path in $E_2(m)$ from $\eta$ to $\eta'$ defined by $\widetilde{\gamma}(-,t):=\gamma(-,t)\circ(\psi_1,\,\ldots,\,\psi_n)$ for $t\in[0,1]$, with corresponding monodromy isomorphism $M_{\widetilde\gamma}\colon a_\eta^*\to a_{\eta'}^*$. Then,  for any $\FF\in\PPP(1)$ the diagram below commutes:
\begin{equation}\label{dgm:compatibility-aM}
\begin{tikzcd}[row sep=huge]
(\prod_{l=1}^na_{\psi_l})^* (a_\varphi^*\FF )
\arrow[rr, "\prod_{l=1}^na_{\psi_l}^* (M_{\gamma,\FF})"] 
\arrow[d, "\equiv"']& 
& (\prod_{l=1}^na_{\psi_l})^*( a_{\varphi'}^*\FF)  \arrow[d,swap, "\equiv"']
\\
 a_\eta^*\FF \arrow[rr, "M_{\widetilde\gamma,\FF}"] 
 & &
a_{\eta'}^*\FF
\end{tikzcd} 
\end{equation}
In particular, for  $n=m_1=m=1$ and for any $\varphi,\psi\in E_2(1)$ there is a unique monodromy isomorphism 
\begin{equation}M_{\varphi,\psi}\colon a_{\varphi}^*\to a_{\psi}^*\end{equation}
and there holds 
\begin{equation}\label{eq:monodromy-n=1}M_{\varphi\circ\psi,\id_U}=M_{\psi,\id_U}\circ M_{\varphi\circ\psi,\psi},\quad M_{\varphi\circ\psi,\psi}=a^*_\psi(M_{\varphi,\id_U}).\end{equation}

Analogous results hold for $\FF\in \PPP_{\leq d}(1)$ and $\FF\in \PPP^{\leq d}(1)$ for any $d\geq 1$.
 \end{lem}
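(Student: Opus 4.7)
The plan is to reduce the commutativity of \eqref{dgm:compatibility-aM} to a factorization of stratified maps at the level of symmetric products. By the very definition of $\widetilde\gamma$ one has $\widetilde\gamma(-,t)=\gamma(-,t)\circ(\psi_1,\,\ldots,\,\psi_n)$ for every $t$, so by applying \eqref{eq:symemb} timewise (equivalently, Lemma~\ref{lem:b-passa}) the associated stratified map $a_{\widetilde\gamma}\colon \sym(U)^m\times (a,b)\to \sym(U)$ factors as
\[
a_{\widetilde\gamma} \;=\; a_\gamma\circ \bigl(\textstyle\prod_{l=1}^n a_{\psi_l}\times \id_{(a,b)}\bigr).
\]
Setting $f:=\prod_{l=1}^n a_{\psi_l}\colon \sym(U)^m\to \sym(U)^n$, this yields $a_{\widetilde\gamma(-,t)}^\ast=f^\ast\circ a_{\gamma(-,t)}^\ast$ for every $t$.

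First I would promote this to the monodromy level by tracking the naturality of the isomorphisms $\alpha_{-,u,\FF}$ built in Proposition~\ref{lem:equiv2}, from which $M$ is extracted. More precisely, the map $\alpha_{a_\gamma,0,\FF}\colon \pi^\ast a_\varphi^\ast \FF\to a_\gamma^\ast \FF$ is uniquely determined by $i_0^\ast(\alpha_{a_\gamma,0,\FF})=\id_{a_\varphi^\ast \FF}$; since $f^\ast$ commutes with both $\pi^\ast$ and $i_u^\ast$ (with $\pi$ and $i_u$ denoting the projection and inclusion on either side of $f\times\id_{(a,b)}$) and sends identities to identities, the pullback $(f\times\id)^\ast\alpha_{a_\gamma,0,\FF}$ satisfies the defining property of $\alpha_{a_{\widetilde\gamma},0,\FF}$ under the canonical identification $(f\times\id)^\ast a_\gamma^\ast=a_{\widetilde\gamma}^\ast$. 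Applying $i_1^\ast$ and invoking the same identification on the target, we obtain
\[
M_{\widetilde\gamma,\FF}\;=\;f^\ast(M_{\gamma,\FF})\;=\;\textstyle\prod_{l=1}^n a_{\psi_l}^\ast(M_{\gamma,\FF}),
\]
which is exactly the commutativity of \eqref{dgm:compatibility-aM}. The main obstacle is checking this naturality cleanly; the idea is that the construction in Proposition~\ref{lem:equiv2} is entirely characterized by identities between pullbacks and therefore transports without obstruction along any stratified map on the source factor. The analogous statements for $\FF\in \PPP_{\leq d}(1)$ and $\FF\in \PPP^{\leq d}(1)$ follow immediately by restriction along $j_{\leq d}$ and by truncation along $i^{\leq d}$, using that $M_\gamma$ is defined on $\mathcal{P}_{\leq d}$ and $\mathcal{P}^{\leq d}$ by the same restriction/truncation procedure (Lemma~\ref{lem:mndrmy}).

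For the specialization to $n=m_1=m=1$, the uniqueness of the monodromy $M_{\varphi,\psi}\colon a_\varphi^\ast\to a_\psi^\ast$ is immediate from Remark~\ref{piE1}: since $E_2(1)$ is contractible there is a unique homotopy class of paths from $\varphi$ to $\psi$, and the isomorphism $M_\gamma$ of Lemma~\ref{lem:mndrmy} depends only on that class. The identity $M_{\varphi\circ\psi,\psi}=a_\psi^\ast(M_{\varphi,\id_U})$ is then the instance of \eqref{dgm:compatibility-aM} with $n=1$, $\psi_1=\psi$, and path $\gamma$ from $\varphi$ to $\id_U$ (so $\widetilde\gamma$ goes from $\varphi\circ\psi$ to $\psi$). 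Finally, the equality $M_{\varphi\circ\psi,\id_U}=M_{\psi,\id_U}\circ M_{\varphi\circ\psi,\psi}$ is the compatibility of $M$ with composition of paths in $E_2(1)$, established already in Lemma~\ref{lem:mndrmy}, applied to the concatenation of a path from $\varphi\circ\psi$ to $\psi$ with a path from $\psi$ to $\id_U$.
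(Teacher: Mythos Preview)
Your proof is correct and follows essentially the same approach as the paper: both factor $a_{\widetilde\gamma}$ as $a_\gamma\circ(\prod_l a_{\psi_l}\times\id)$, use the characterizing property $i_0^\ast\alpha=\id$ of the map $\alpha$ from Proposition~\ref{lem:equiv2} together with the commutation of $\prod_l a_{\psi_l}$ with the inclusions $i_t$ to identify $(f\times\id)^\ast\alpha_{a_\gamma,0,\FF}$ with $\alpha_{a_{\widetilde\gamma},0,\FF}$, and then derive the $n=1$ identities from uniqueness of the homotopy class in $E_2(1)$, the diagram just established, and compatibility with path composition. Your write-up is slightly more explicit about the naturality step, but there is no substantive difference in strategy.
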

\begin{proof} First of all, the vertical arrows are identifications stemming from \eqref{eq:composition-a}. By construction, $\widetilde{\gamma}$ is a path from $\eta$ to $\eta'$, hence $M_{\widetilde{\gamma}}$ and $\prod_{l=1}^na_{\psi_l}^* (M_{\gamma,\FF})$ are both isomorphisms of functors $a_\eta^*\to a_{\eta'}^*$. For $t\in
\{0,1\}$ and $q\in\{n,m\}$, let  $i_{t,q}\colon \sym(U)^q\to \sym(U)^q\times[0,1]$ be defined by $i_{t,q}(x)=(x,t)$. Recall from Lemmata \ref{lem:equiv2} and \ref{lem:mndrmy}, from which we retain notation, that $M_{\widetilde\gamma,\FF}=i_{1,m}^*\widetilde\alpha_\FF$, where $\widetilde\alpha\colon \pi^*a_\eta^*\to a_{\widetilde\gamma}^*$ satisfies $i_{0,m}^*\alpha_\FF=\id_{a_\eta^*\FF}$ and that $M_{\gamma,\FF}=i_{1,n}^*\alpha_\FF$, where $\alpha\colon \pi^*a_\varphi^*\FF\to a_{\gamma}^*\FF$ satisfies $i_{0,1}^*\alpha_\FF=\id_{a_\varphi^*\FF}$. Then commutativity of the diagram follows because 
$(\prod_{l=1}^m a_{\psi_l})\circ i_{t,m}=i_{t,n}\circ(\prod_{l=1}^m a_{\psi_l})$. 

\medskip

As for the case $n=1$, since every $1$-ary embedding is homotopic to the identity embedding, we obtain the uniqueness of the monodromy for $\varphi,\psi\in E_2(1)$, and  
\eqref{eq:monodromy-n=1} follows from uniqueness and \eqref{dgm:compatibility-aM}. The statements for $\PPP_{\leq d}(1)$ and $\PPP^{\leq d}(1)$ follow from compatibility of the involved functors with restriction to $\sym_{\leq d}(U)^q$ and truncation to degree $d$.\end{proof}

We use the convention for $b_{\varphi}$ and $b^*_{\varphi}$  from page \pageref{page:b}.

\begin{lem}\label{lem:compat-a-M-factorized}
Let $n\in\NN_{\geq1}$  and  $\theta\in E_2(n)$. For $l=1,\,\ldots,\,n$, let $m_l\in\NN_{\geq 1}$, let $\varphi_l,\psi_l\in E_2(m_l)$ and let $\gamma_l$ be a continuous path in $E_2(m_l)$ from $\varphi_l$ to $\psi_l$. We set $m:=\sum_{l=1}^nm_l$. 
Let $\tilde{\gamma}$ be the continuous path in $E_2(m)$ from $\eta:=\theta\circ (\varphi_1,\,\ldots,\,\varphi_n)$ to $\zeta:=\theta\circ (\psi_1,\,\ldots,\,\psi_n)$ obtained by operadically composing the constant path $\theta$ with the path $\underline\gamma=(\gamma_1,\,\ldots,\,\gamma_n)$ in $\prod_{l=1}^nE_2(m_l)$. 

Let $d\in\NN$ and let $\FF$ be an object in $\PPP_{\leq d}(1)$,  $\PPP^{\leq d}(1)$, or $\PPP(1)$. Assume that there is an isomorphism $\mu_\theta\colon \FF^{\boxtimes_\theta n}\to b_\theta^*(\FF)$. Then, 
\begin{equation}\label{eq:mon-b}M_{\widetilde\gamma,\FF}=M_{\underline\gamma,b^*_{\theta}\FF}\end{equation} where the right hand side is as in Remark \ref{rem:monodromy-product} and the following diagram commutes
\begin{equation}\label{dgm:compatibility-aM-factorized*}
\begin{tikzcd}[row sep=huge]
\boxtimes_\theta(b^*_{\varphi_1}\FF,\ldots,\,b_{\varphi_n}^*\FF)
\arrow{rr}{(\prod_{j=1}^nb_{\varphi_j})^* (\mu_\theta)} \arrow{d}[swap]{\boxtimes_\theta(M_{\gamma_1\FF},\,\ldots,\,M_{\gamma_n\FF})}&&b_\eta^*\FF \arrow{d}{M_{\widetilde\gamma,\FF}} \\
\boxtimes_\theta(b^*_{\psi_1}\FF,\ldots,\,b_{\psi_n}^*\FF)\arrow{rr}{(\prod_{j=1}^nb_{\psi_j})^* (\mu_\theta)}&&
b_{\zeta}^*\FF 
\end{tikzcd} 
\end{equation}
\end{lem}
\begin{proof}
Let $Y$ be either $\sym(U)$, $\sym_{\leq d}(U)$ or $\sym^{\leq d}(U)$.
First of all we show that, after the appropriate identifications stemming from \eqref{eq:composition-a}, we have \eqref{eq:mon-b}. We consider the  stratified maps $b_{\underline\gamma}\colon (\prod_{l=1}^n Y^{m_l})\times [0,1]\to Y^n$ and 
$b_{\widetilde\gamma}\colon Y^m\times [0,1]\to Y$ obtained by applying $\sym\times\id$ to $\underline\gamma\colon \coprod_{l=1}^nU^{\coprod m_l}\times [0,1]\to U^{\coprod n}$ and $\widetilde{\gamma}\colon U^{\coprod m}\times[0,1]\to U$. Then 
$b_{\widetilde\gamma}$ is the composition of $b_{\underline\gamma}$ with the stratified open embedding $b_\theta$ and the desired equality follows from Remark \ref{rem:open-embedding} with $\alpha=b_{\underline\gamma}$ and $f=b_\theta$. 

Next, we observe that the diagram below commutes by naturality of $M_{\underline\gamma}$ applied to the isomorphism $\mu_\theta$
\begin{equation}\label{dgm:compatibility-aM-partial}
\begin{tikzcd}[row sep=huge]
(\prod_{l=1}^nb_{\varphi_l})^*\FF^{\boxtimes_\theta n}
\arrow{rr}{(\prod_{l=1}^nb_{\varphi_l})^* (\mu_\theta)} \arrow{d}[swap]{M_{\underline\gamma,\FF^{\boxtimes_\theta n}}}&&(\prod_{l=1}^nb_{\varphi_l})^*(b_\theta^*\FF) \arrow{d}{M_{\underline\gamma,b^*_\theta\FF}} \\
(\prod_{l=1}^nb_{\psi_l})^*\FF^{\boxtimes_\theta n}\arrow{rr}{(\prod_{l=1}^nb_{\psi_l})^* (\mu_\theta)}&&
(\prod_{l=1}^nb_{\psi_l})^*(b_\theta^*\FF)
\end{tikzcd} 
\end{equation}

Finally, the identifications $(\prod_{l=1}^nb_{\varphi_l})^*\FF^{\boxtimes_\theta n}= \boxtimes_\theta (b_{\varphi_l}\FF)^*$ and $(\prod_{l=1}^nb_{\psi_l})^*\FF^{\boxtimes_\theta n}= \boxtimes_\theta (b_{\psi_l}\FF)^*$ from Lemma \ref{lem:b-passa} yield an identification of the components of the corresponding monodromies, that is, $\boxtimes_\theta(M_{\gamma_l,\FF})_{l=1}^n=M_{\underline\gamma,\FF^{\boxtimes_\theta n}}$. \end{proof}

We now show that monodromy is compatible with extension functors. 
\begin{lem}
Let $c,d\in \NN$ with $d\leq c$, let ${\mathbf j}\colon Y\to Y'$ be either $j_{\leq d}$ or  $j_{\leq d,\leq c}$ as in \eqref{eq:embedding-j}.
For $\varphi\in E_2(n)$, let $b_\varphi\colon Y^n\to Y$  and $b'_\varphi\colon (Y')^n\to Y'$ be the multiplication embeddings as in \eqref{a-phi} or \eqref{a-phi-leq}.  Let $n\in \NN_{\geq1}$ and let $\gamma$ be a path from $\varphi$ to $\psi$ in $E_2(n)$. Let $M_\gamma$ and $M_\gamma'$ stand for the monodromy isomorphism over $Y^n$ and $(Y')^n$, respectively. Then, for any $\FF\in \PS(Y,\Sigma, \Vu)$, and $\bullet$ any of the extensions $!, *$ or $!*$ there holds
\begin{equation}\label{eq:monodromy-estension}
{}^p(\mathbf j^n)_\bullet M_{\gamma,\FF}=M'_{\gamma,{}^p\mathbf j_\bullet\FF}.
\end{equation}
\end{lem}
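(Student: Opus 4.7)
The plan is to derive the result from the defining construction of the monodromy in Proposition \ref{lem:equiv2}, combined with base change for open embeddings.

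I will first recall from the proof of Lemma \ref{lem:mndrmy} that, extending $\gamma$ to a continuous family $\gamma\colon U^{\coprod n}\times (a,b)\to U$ with $0,1\in (a,b)$ and applying $\sym$, one obtains stratified maps $b_\gamma\colon Y^n\times (a,b)\to Y$ and $b'_\gamma\colon (Y')^n\times (a,b)\to Y'$. By Proposition \ref{lem:equiv2}, the monodromy $M_{\gamma,\FF}$ equals $i_1^*\alpha_\FF$, where $\alpha_\FF\colon \pi^*b_\varphi^*\FF\xrightarrow{\sim} b_\gamma^*\FF$ is the unique isomorphism with $i_0^*\alpha_\FF=\id$, and analogously for $M'_{\gamma,{}^p\mathbf j_\bullet \FF}$.

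Next, I will observe that the square
\begin{equation*}
\begin{tikzcd}
Y^n\times (a,b)\arrow[r, "b_\gamma"]\arrow[d, "\mathbf j^n\times \id"'] & Y\arrow[d, "\mathbf j"]\\
(Y')^n\times (a,b)\arrow[r, "b'_\gamma"'] & Y'
\end{tikzcd}
\end{equation*}
is Cartesian: indeed $\mathbf j$ is an open embedding given by a multiplicity bound, and the multiplication maps preserve this bound, so $(b'_\gamma)^{-1}(Y)=Y^n\times (a,b)$. Since $\mathbf j$ and $\mathbf j^n\times\id$ are open embeddings, base change together with the compatibility of perverse truncation with pullback along $b_\gamma, b'_\gamma$ yield natural isomorphisms ${}^p(\mathbf j^n\times\id)_\bullet\circ b_\gamma^*\simeq b'^{*}_\gamma\circ {}^p\mathbf j_\bullet$ for $\bullet=*,!,!*$ (the case $!*$ following from $!$ and $*$ by functoriality of the image in the canonical morphism ${}^p\mathbf j_!\to {}^p\mathbf j_*$). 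Analogous compatibilities hold for $\pi^*$ and $i_u^*$, and for $b_\varphi^*, b_\psi^*$ by Lemma \ref{lem:a-treno}.

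With these identifications in hand, I will apply ${}^p(\mathbf j^n\times\id)_\bullet$ to $\alpha_\FF$ and transport the result to an isomorphism $\alpha'\colon \pi^*b'^{*}_\varphi\,{}^p\mathbf j_\bullet\FF\to b'^{*}_\gamma\,{}^p\mathbf j_\bullet\FF$. Applying $i_0^*$, and using its compatibility with the extensions, transforms the equality $i_0^*\alpha_\FF=\id$ into $i_0^*\alpha'=\id$. By the uniqueness clause in Proposition \ref{lem:equiv2}, this forces $\alpha'=\alpha'_{{}^p\mathbf j_\bullet \FF}$, the defining isomorphism for $M'_{\gamma,{}^p\mathbf j_\bullet \FF}$. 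Applying $i_1^*$ then yields ${}^p(\mathbf j^n)_\bullet M_{\gamma,\FF}=i_1^*\alpha'=M'_{\gamma,{}^p\mathbf j_\bullet \FF}$, as required.

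I expect the main obstacle to be establishing the base change ${}^p(\mathbf j^n\times\id)_\bullet b_\gamma^*\simeq b'^{*}_\gamma\,{}^p\mathbf j_\bullet$, since $b_\gamma$ is not itself an open embedding (it moves with the parameter $t\in (a,b)$). The key facts one needs are the Cartesian property of the square and the fact that $\mathbf j$ (hence $\mathbf j^n\times\id$) is an open embedding; once these are in place, the argument parallels the one used for $b_\varphi, b_\psi$ in the proof of Lemma \ref{lem:a-treno}, together with compatibility of $i_u^*$ and $\pi^*$ with extensions that is used implicitly throughout Section \ref{mndrmy}.
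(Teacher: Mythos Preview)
Your overall strategy is correct and matches the paper's: push $\alpha_\FF$ forward via ${}^p(\mathbf j^n\times\id)_\bullet$, identify the result with $\alpha'_{{}^p\mathbf j_\bullet\FF}$ by checking the normalization at $i_0$, then read off the conclusion at $i_1$.

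However, you have misidentified where the difficulty lies. The base change ${}^p(\mathbf j^n\times\id)_\bullet\, b_\gamma^*\simeq b'^{*}_\gamma\,{}^p\mathbf j_\bullet$ is the easy part: $b'_\gamma$ is smooth (it is a family of open embeddings parameterized by $(a,b)$), so this follows exactly as in the proof of Lemma~\ref{lem:a-treno}. The genuinely delicate step is the one you dismiss as routine, namely the compatibility $(i'_u)^*\circ{}^p(\mathbf j^n\times\id)_\bullet\simeq{}^p(\mathbf j^n)_\bullet\circ i_u^*$, which you need both to verify $i_0'^*\alpha'=\id$ and to extract the result at $i_1$. This compatibility is \emph{not} in Section~\ref{mndrmy}: that section treats only pullbacks, never extensions. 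For $\bullet=!$ the compatibility is standard base change for extension by zero; but for $\bullet=*$ it does not follow from general base change, since $i_u$ is a closed embedding and in general $i^*j_*\not\simeq j'_*i'^*$ across a Cartesian square with $j$ open. The paper handles this by invoking Lemma~\ref{lem:equiv}: any constructible complex $\mathcal K$ on $Y^n\times(a,b)$ satisfies $i_l^*\mathcal K\simeq\pi_*\mathcal K$, so one may trade $i_l^*$ for $\pi_*$, which \emph{does} commute with $(\mathbf j^n\times\id)_*$ by functoriality of pushforward. The paper also handles the perverse truncation via an explicit shift (since $a_\gamma^*\FF$ is perverse on $Y^n\times(a,b)$ only after $[1]$), and deduces the $!*$ case by showing the canonical map ${}^p(-)_!\to{}^p(-)_*$ is compatible with $i_l^*$; both points are absent from your sketch.
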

\begin{proof}Let $(a,b)$ be a real interval with $a<0$ and $b>1$. For $u\in [0,1]$, let $i_u\colon Y^n\to Y^n\times (a,b)$ and $i'_u\colon (Y')^n\to (Y')^n\times (a,b)$ be as in Subsection \ref{mndrmy}, from which we retain notation. We have a commutative diagram
\begin{equation}\label{dgm:inclusion}
\begin{tikzcd}
Y^n\arrow{rr}{i_u} \arrow{d}[swap]{\bf j^n}&&Y^n\times (a,b)\arrow{d}{\bf j^n\times\id} \\
(Y')^n\arrow{rr}{i_u'}&&(Y')^n\times (a,b)
\end{tikzcd} 
\end{equation}
Recall that $M'_{\gamma,{}^p\mathbf j_\bullet\FF}=(i_1')^*\alpha'_{{}^p\mathbf j_\bullet\FF}$, where $\alpha'$ is the natural transformation determined by the equality $(i_0')^*\alpha'_{{}^p\mathbf j_\bullet\FF}=\id_{b^{'*}_{\varphi} ({}^p\mathbf j_\bullet\FF)}$, and that $M_{\gamma,\FF}=i_1^*\alpha_{\FF}$, where $\alpha$ is determined  by the equality  $i_0^*\alpha_{\FF}=\id_{b^{*}_{\varphi} (\FF)}$. Now, for any $u\in[0,1]$ there holds
$(\mathbf j^n)_! i_u^*=(i'_u)^* (\mathbf j^n\times \id)_!$ by base change, \cite[p. 625]{dCM}. 

Let $\pi\colon Y^n\times (a,b)\to Y^n$ and $\pi'\colon (Y')^n\times (a,b)\to (Y')^n$ be the natural projections. Then, for any complex $\mathcal K$ in $D^b_c({Y^n\times(a,b)},\Sigma^n\times S_0,\Vu)$ and any $l\in (a,b)$ there holds $\mathcal K\simeq\pi^*\pi_*\mathcal K\simeq \pi^*(K|_{Y^n})=\pi^*i_l^*\mathcal K$ making use of \cite[Lemma 3.5.4]{DCM}. Therefore
\begin{align*}\label{eq:ji=ij}
 (\mathbf j^n)_*i_l^*\mathcal K&=(\mathbf j^n)_*\pi_*\mathcal K=(\mathbf j^n\pi)_*\mathcal K=(\pi'(\mathbf j^n\times\id))_*\mathcal K\\
 &=\pi'_*(\mathbf j^n\times \id)_*\mathcal K=(i_l')^*(\mathbf j^n\times\id)_*\mathcal K.
\end{align*}
Now, if $\mathcal F$ is a perverse sheaf on $Y^n$ and $a_\gamma$ is as in the proof of Lemma \ref{lem:mndrmy}, then $i_l^*a_\gamma^*\mathcal F=a_{\gamma\circ i_l}^*\mathcal F$ is also perverse, and so for $\bullet=*$ or $!$, with the appropriate truncation, we have 
\begin{align*}
{}^p(\mathbf j^n)_\bullet i_l^*a_\gamma^*\mathcal F&={}^p\tau_{\leq0}\circ{}^p\tau_{\geq 0}((\mathbf j^n)_\bullet i_l^*a_\gamma^*\mathcal F)={}^p\tau_{\leq0}\circ{}^p\tau_{\geq 0}(((i'_l)^*(\mathbf j^n\times \id)_\bullet (a_\gamma^*\mathcal F)[1])[-1])\\
&=\left((i'_l)^*\circ {}^p(\mathbf j^n\times \id)_\bullet (a_\gamma^*\mathcal F)[1]\right)[-1].
\end{align*}
Let $\Omega\colon {}^p((\mathbf j^n\times \id)_! (a_\gamma^*\mathcal F)[1])\to {}^p((\mathbf j^n\times \id)_* (a_\gamma^*\mathcal F)[1])$ be the unique morphism extending the identity on $Y^n\times (0,1)$. Then, $i_l^*(\Omega)[1]\colon {}^p(\mathbf j^n)_! i_l^*a_\gamma^*\mathcal F\to {}^p(\mathbf j^n)_* i_l^*a_\gamma^*\mathcal F$ is the unique morphism extending the identity on $Y^n$. Passing to the images, we obtain the equality
\begin{equation}
{}^p(\mathbf j^n)_{!*} i_l^*a_\gamma^*\mathcal F=\left((i'_l)^*\circ ({}^p(\mathbf j^n\times \id)_{!*} (a_\gamma^*\mathcal F)[1])\right)[-1].
\end{equation}
Making use of Remark \ref{rem:open-embedding} (2) there holds, for $\bullet=!, *$ or $!*$
\begin{align*}
 &{}^p(\mathbf j^n)_\bullet M_{\gamma,\FF}={}^p(\mathbf j^n)_\bullet  i_1^*\alpha_{\FF} =(i'_1)^*\, {}^p(\mathbf j^n\times \id)_\bullet\alpha_\FF\mbox{ and }\\
 &(i'_0)^*\, {}^p(\mathbf j^n\times \id)_\bullet\alpha_\FF={}^p(\mathbf j^n)_\bullet  (i_0)^*\alpha_{\FF}={}^p(\mathbf j^n)_\bullet \id_{b_{\varphi}^*\FF}=\id_{b^{'*}_{\varphi} ({}^p\mathbf j_\bullet\FF)}
 \end{align*} where we used \eqref{dgm:lower*and!} for the last equality. 
Hence $\alpha'_\FF={}^p(\mathbf j^n\times \id)_\bullet\alpha_\FF$, giving \eqref{eq:monodromy-estension}.
\end{proof}

\section{factorized sheaves}\label{facdata}

The functors $\boxtimes_{\varphi}$,  $a_\varphi^*$ for $\varphi\in E_2(n)$ and $n\in\NN$, allow us to introduce the notion of factorization data  for objects of $\mathcal{P}(1)$ and factorized perverse sheaves on $\sym(U)$ as in \cite{KS3}.
The restricted and truncated analogues  defined in Subsections \ref{sec:a} and \ref{sec:tens-phi} for $d\in\NN$ allow us to expand these notions and introduce the truncated analogues: factorization data for objects in   $\mathcal{P}_{\leq d}(1)$ and $\mathcal{P}^{\leq d}(1)$ and (truncated) factorized perverse sheaves on $\mathrm{Sym}_{\leq d}(U)$ and $\sym^{\leq d}(U)$. We use the convention for $b_{\varphi}$ and $b^*_{\varphi}$  from page \pageref{page:b}.

\begin{defn}\label{def:facdat}
    Let $d\in\NN$ and let $\mathcal{F}$ be an object of $\mathcal{P}(1)$,  $\mathcal{P}_{\leq d}(1)$,   $\mathcal{P}^{\leq d}(1)$, respectively.  
    A factorization datum on $\mathcal{F}$ is the datum of a family of isomorphisms
   \begin{equation}\label{eq:mu-phi}
    \mu_\varphi : \mathcal{F}^{\boxtimes_\varphi n}
    \rightarrow
     b_\varphi^\ast \mathcal{F}\ \ , \ \ 
    \varphi \in E_2(n),\,n\in\NN
    \end{equation}
    satisfying the conditions: 
    \begin{enumerate}%[(a)]
                \item\label{item:b}
        {\bf Braiding-monodromy compatibility.} If $\gamma$ represents the unique homotopy class of paths in $E_2(1)$ joining $\varphi$ to $\id_U$, the diagram below is commutative
        \begin{equation}\label{eq:mon-bra}
            \begin{tikzcd}%[row sep=huge]
\boxtimes_{\varphi} \FF 
\arrow[rr, "\mu_\varphi"] 
\arrow[dr, swap, "\underline{R}_{\gamma,\FF}"]& 
& b_\varphi^\ast \mathcal{F}
\arrow[dl, "M_{\gamma,\FF}"]\\
&\boxtimes_{\id_U} \FF=\FF=b_{\id_U}^\ast \mathcal{F}
%\arrow[rr, "\mu_{\id}=\id"]  & 
%& \FF=b_{\id}^\ast \mathcal{F}
\end{tikzcd} 
        \end{equation}
where $\underline{R}_{\gamma,\FF}$ and $M_{\gamma,\FF}$ are the components relative to $\FF$ of the isomorphisms of functors $\underline{R}_\gamma$  and  $M_\gamma:=M_{\varphi,\id_U}$ as in \eqref{eq:defR} and Lemma \ref{lem:compat-a-M}.
        \item \label{a}
        {\bf Compatibility with operadic composition.} For every $\varphi\in E_2(n)$ and every $\psi_l\in E_2(m_l)$ for $l=1,\,\ldots,\,n$, with composition
        $\eta = 
        \varphi \circ 
        (\psi_1, \ldots, \psi_n)$, the diagram

   \begin{equation}\label{eq:operadic}
\begin{tikzcd}%[row sep=huge]
 (\Pi_{l=1}^n b_{\psi_l})^\ast\mathcal{F}^{\boxtimes_\varphi n}     \arrow[rr, "(\Pi_{l=1}^n b_{\psi_l})^\ast\mu_\varphi"]  & &(\Pi_{l=1}^n b_{\psi_l})^\ast
b_\varphi^\ast \mathcal{F}  
      \arrow[d, "="]  \\
 \boxtimes_{\varphi}b_{\psi_l}^\ast\mathcal{F} 
\arrow[u, "="] & &b_\eta^\ast \mathcal{F}\\
\arrow[u, "\boxtimes_\varphi(\mu_{\psi_l})_{l=1}^n"]
\boxtimes_\varphi(\FF^{\boxtimes_{\psi_1}m_1},\,\ldots,\,\FF^{\boxtimes_{\psi_n}m_n})
      \arrow[rr, "="]
 & & \mathcal{F}^{\boxtimes_\eta n}
 \arrow[u, "\mu_\eta"']\\
\end{tikzcd} 
\end{equation}
is commutative.
%In other words, the equality 
%\begin{equation}\label{eq:facdat_a}
%\mu_\eta = 
%$((\Pi_{l=1}^n b_{\psi_l})^\ast(\mu_\varphi))\circ
% \boxtimes_\varphi(\mu_{\psi_l})_{l=1}^n  
%\end{equation}
%is verified.
\item\label{item:c} {\bf Symmetric equivariance.} For every $\varphi\in E_2(n)$ and for every $\sigma\in {\mathbb S}_n$, the isomorphisms
\begin{equation}\label{eq:equivariance}
\mu_{\varphi^\sigma}\colon \mathcal{F}^{\boxtimes_{\varphi^\sigma}n}\to b_{\varphi^\sigma}^*{\mathcal F}\quad \mbox{ and }
\quad \sigma^*(\mu_\varphi)\colon \sigma^*\left(\mathcal{F}^{\boxtimes_{\varphi}n}\right)\to \sigma^*(b_{\varphi}^*{\mathcal F})
\end{equation} coincide up to the identification \eqref{eq:identification} and application of \eqref{eq:a-phi*-equivariance}.
 \end{enumerate}

\medskip

We denote by $\mathrm{FD}(\mathcal{F})$ the set of factorization data on $\mathcal{F}$. 
Any pair $(\FF,\mu)$ with $\mu\in \mathrm{FD}(\mathcal{F})$ is called a factorized perverse sheaf.  
\label{page:FD}
\end{defn}

\begin{remark}{\rm Let $(\FF,\mu)$ be a factorized perverse sheaf on $\sym(U),\,\sym_{\leq d}(U)$ or $\sym^{\leq d}(U)$ and let ${\rm Aut}(\FF)$ be the group of perverse sheaves automorphisms of $\FF$. For $\upsilon\in {\rm Aut}(\FF)$, the collection $\upsilon\cdot\mu$ of isomorphisms $\upsilon\cdot\mu_{\varphi}\colon \FF^{\boxtimes_\varphi n}\to b_\varphi^*(\FF)$ for $\varphi\in E_2(n)$ for $n\in \NN$ defined as 
$\upsilon\cdot\mu_{\varphi}:=b_\varphi^*(\upsilon)\circ \mu_\varphi\circ \boxtimes_\varphi(\upsilon^{-1})_{l=1}^n$ is again a factorization datum for $\FF$. Indeed: naturality of the monodromy and braiding give commutativity of \eqref{eq:mon-bra}; \eqref{eq:operadic} follows from Lemma \ref{lem:b-passa}, functoriality of $\boxtimes$ and compatibility with operadic composition of $\boxtimes$; and \eqref{eq:equivariance} follows from \eqref{eq:a-phi*-equivariance} and \eqref{eq:boxtimes-sigma}. 
Functoriality of $b_\varphi^*$ and $\boxtimes_\varphi$ ensures that $(\upsilon\circ\upsilon')\cdot\mu=\upsilon\cdot(\upsilon'\cdot \mu)$ for any $\upsilon,\upsilon'\in {\rm Aut}(\FF)$, so ${\rm Aut}(\FF)$ acts on $\mathrm{FD}(\FF)$. }
\end{remark}

\medskip

Let $\varphi\in E_2(n)$, let $Y$ be either $\sym(U)$, $\sym_{\leq d}(U)$ or $\sym^{\leq d}(U)$ and let $\alpha=(\alpha_1,\ldots,\alpha_n)\in \mathbb N^n$. We consider the restriction $b_{\varphi,\alpha}$ of $b_{\varphi}$ to the intersection of $Y^n$ with $\sym^\alpha(U):=\sym^{\alpha_1}(U)\times\cdots\times\sym^{\alpha_n}(U)$. The morphism $\mu_{\varphi}$ as in $\eqref{eq:mu-phi}$ is a graded isomorphism of perverse sheaves on $Y^n$ and we denote by $\mu_{\varphi,\alpha}$ its restriction to $Y^n\cap\sym^{\alpha}(U)$.  
Let now $m_l\in\NN$ for $l=0,\,\ldots,\,n$, with $m_0=0$, and set $m=\sum_{l=1}^n m_l$ and let $\varphi\in E_2(n)$ and $\underline{\psi}=(\psi_1,\,\ldots,\,\psi_n)\in \prod_{l=1}^nE_2(m_l)$. For 
$\theta=(\theta_1,\,\ldots,\theta_m)\in \NN^m$, set $\beta_l=(\theta_{m_1+\cdots+ m_{l-1}+1},\ldots,\theta_{m_1+\cdots+m_l})$ for $l=1,\ldots, n$ so $|\beta_l|=\sum_{j=m_1+\cdots+ m_{l-1}+1}^{m_1+\cdots+ m_{l-1}+m_l}\theta_j$. Then, \eqref{eq:operadic} restricted to $\prod_{j=1}^m\sym^{\theta_j}(U)$ becomes \label{p:a-phi-alpha}
\begin{equation}\label{eq:factdat_a_partition}
  \mu_{\varphi\circ\underline{\psi},\theta} = 
((\Pi_{l=1}^n b_{\psi_l,\beta_l})^\ast(\mu_{\varphi,(|\beta_1|,\,\ldots,\,|\beta_n|)}))\circ
 \boxtimes_\varphi(\mu_{\psi_l,\beta_l})_{l=1}^n.    
\end{equation}

%\medskip

\begin{example}\label{ex:F0}
{\rm Let $Y$ be either $\sym(U)$, $\sym^{\leq d}(U)$ or $\sym_{\leq d}(U)$, let $(\FF,\mu)$
be a factorized perverse sheaf on $Y$. We consider the restrictions $\FF_0$ and $\FF_1$ of $\FF$ to $\sym^0(U)$ and $\sym^1(U)$, respectively and the compatibility morphisms $\mu_{\varphi,\alpha}$ for $\varphi\in E_2(n)$ and $|\alpha|\leq 1$.}
\begin{enumerate}{\rm
    \item\label{item:0} The complex $\FF_0$  is a perverse sheaf on $Y(0)=\sym^0(U)=\{\so\}$, a point, so $\FF_0$ is just the stalk $\FF_{\so}$, an object in $\Vu$, concentrated in degree 0. In addition, $E_2(0)$ contains only the (trivial) linear embedding $\varphi_0\colon U^{\coprod 0}=\emptyset \to U$. Applying  the functor $\sym$ to it, possibly followed by restriction or truncation, we get the inclusion $b_{\varphi_0}\colon \{\so\}\to Y$, whose restriction $(b_{\varphi_0})^{\leq0}$ to the $0$-th component is just the identity map $\{\so
    \}\to\{\so\}$. Hence, the restriction $(\mu_{\varphi_0})^{\leq 0}$ of $\mu_{\varphi_0}$ to  $\{\so\}$ is an isomorphism ${\mathbf 1}_{\Vu}\simeq\FF_{\so}^{\boxtimes_{\varphi_0}}\to ((b_{\varphi_0})^{\leq0})^*(\FF_0)=\FF_{\so}$. \\
    Let now $\varphi\in E_2(n)$ for $n\in\NN_{\geq1}$ and $\alpha=(0,\ldots,0)$. The restriction $b_{\varphi,\alpha}$ of $b_\varphi$ is the constant map $\{\so\}^n\to\{\so\}$ and  $\varphi\circ(\varphi_0^n)=\varphi_0$, hence by  \eqref{eq:factdat_a_partition} the restriction 
    $\mu_{\varphi,\alpha}\colon \left(\FF^{\boxtimes_\varphi n}\right)_{(\so,\ldots,\so)}\to (b_{\varphi}^*\FF)_{(\so,\ldots,\so)}$ is  the isomorphism  
  \begin{equation*}
 \left(\FF^{\boxtimes_\varphi n}\right)_{(\so,\ldots,\so)}\simeq \FF_{\so}^{\boxtimes_\varphi n}\longrightarrow
  {\mathbf 1}_\Vu^{\boxtimes_{\varphi}n}\simeq  {\mathbf 1}_\Vu\longrightarrow \FF_{\so}\simeq b_{\varphi,\alpha}^*\FF_0\simeq (b_{\varphi}^*\FF)_{(\so,\ldots,\so)},\end{equation*} where the arrows are the restrictions of $\left(\boxtimes_{\varphi}((\mu_{\varphi_0})^{\leq 0})^n\right)^{-1}$ and $\mu_{\varphi_0}$, respectively. In other words, with the appropriate identifications, $\mu_{\varphi,\alpha}$ corresponds to ${\mathbf 1}_\Vu^{\boxtimes_{\varphi}n}\simeq  {\mathbf 1}_\Vu$. 
    \item\label{item:mu-1} Let $d\in\NN_{\geq1}$.  Since $Y(1)=\sym^1(U)=U$ is contractible, $\FF_1$ can be identified with its stalk at a point in $U$, that is, a complex given by an object of $\Vu$ concentrated in degree $-1$. Then, for $n\in\NN_{\geq1}$
\[Y^n(1)=
\coprod_{l=0}^{n-1}\left(\{\so\}^l\times U\times\{\so\}^{n-l-1}\right)\] 
and so for any $\psi\in E_2(n)$ and any $l\in\{0,\,\ldots,n-1\}$
the restriction $b_{\psi, (0^{l}, 1,0^{n-l-1})}$ of $b_\psi$ is homotopic to the natural identification $\{\so\}^l\times U\times\{\so\}^{n-l-1}\to U$. In addition, if $\psi_{l+1}$ is the $(l+1)$-th unary embedding in $\psi$, there holds $\psi\circ (\varphi_0^{l},\id_U,\varphi_0^{n-l-1})=\psi_{l+1}$. Applying \eqref{eq:factdat_a_partition} we see that on $\{\so\}^l\times U\times\{\so\}^{n-l-1}$, after the appropriate identifications induced by $\mu_{\varphi_0}$, the isomorphism
\begin{equation*}\mu_{\psi, (0^{l}, 1, 0^{n-l-1})}\colon \FF^{\boxtimes_\psi n}\simeq\FF_1\to \FF_1\simeq b_{\psi, (0^{l}, 1,0^{n-l-1})}^*\FF\end{equation*} corresponds to the isomorphism  $\mu_{\psi_{j+1}}\colon \FF_1^{\boxtimes_{\psi_{l+1}}1}\simeq \FF_1\to b_{\psi_{l+1}}^*(\FF_1)$, which is uniquely determined by  \eqref{eq:mon-bra}.}
\end{enumerate}
\end{example}

\begin{defn} \label{def:FP}
   For $d\in\NN$, the categories $\mathcal{FP}$, 
     $\mathcal{FP}_{\leq d}$,  $\mathcal{FP}^{\leq d}$, are the categories whose objects are factorized perverse sheaves on $\sym(U)$, $\sym_{\leq d}(U)$, and $\sym^{\leq d}(U)$, respectively. The morphisms from the object $(\mathcal{F}, \mu)$ to $(\GG, \nu)$ 
     are the morphisms of perverse sheaves
    \[
    f: \mathcal{F} \rightarrow \GG
    \]
   satisfying:
   \begin{enumerate}
   \item[(a)] the $0$-th component $f_0\colon{\mathbf 1}_\Vu\simeq\FF_0\to\GG_0\simeq{\mathbf 1}_\Vu$ is the identity;
   \item[(b)] for every $n\in\NN$ and any $\varphi\in E_2(n)$ the diagram
    \begin{equation}\label{eq:def-morphisms}
            \begin{tikzcd}[row sep=huge]
 \mathcal{F}^{\boxtimes_{\varphi} n} 
\arrow[rr, "\mu_\varphi"] 
\arrow[d, swap, "f^{\boxtimes_\varphi n}"]& 
& b_\varphi^\ast \mathcal{F}
\arrow[d, "b_\varphi^\ast f"]\\
 \GG^{\boxtimes_{\varphi} n}
\arrow[rr, "\nu_{\varphi}"]  & 
& b_{\varphi}^\ast \GG
\end{tikzcd} 
        \end{equation}
        is commutative. 
        \end{enumerate}

               \end{defn}

\begin{remark}{\rm Even though under our assumptions on $\Vu$ the category $\PPP(1)$ is abelian and $k$-linear, the category $\FP$ is not abelian nor $k$-linear, as linear combinations of morphisms are no longer morphisms. 

The category $\FP$ has been extensively studied in \cite{KS3}, where it is shown  to be equivalent to the category of connected bialgebras in $\Vu$. The main motivation to introduce $\FP_{\leq d}$ and $\FP^{\leq d}$ is to give an analogous result for the truncated version of connected bialgebras defined in \cite{CERyD-A} and to give a geometric counterpart to the $d$-th approximation functor constructed there.}
\end{remark}

It follows from Example \ref{ex:F0} \eqref{item:mu-1} that the restriction to $\sym^1(U)$ on perverse sheaves and morphisms in $\PPP(1)$, $\PPP_{\leq d}(1)$, or $\PPP^{\leq d}(1)$ for $d\in\NN_{\geq 1}$ induces functors 
 \begin{equation}\label{eq:varrho}\varrho\colon\FP\longrightarrow\Vu,\quad \varrho^d\colon\FP^{\leq d}\longrightarrow\Vu,\mbox{ and }\varrho_d\colon\FP_{\leq d}\longrightarrow\Vu.\end{equation}

Using these functors the categories  $\FP^{\leq0}$ and $\FP^{\leq1}$ can be described very concretely. 
\begin{prop}\label{prop:varrho}With the above notation
 \begin{enumerate}
 \item $\varrho^1$  and $\varrho_1$ are faithful.
     \item     The categories $\FP_{\leq0}$ and $\FP^{\leq0}$ are both equivalent to the category whose only object is ${\mathbf 1}_\Vu$ and whose only morphism is the identity morphism.
     \item The functor $\varrho^1$ is an equivalence  $\mathcal{FP}^{\leq 1}\to\Vu$. 
 \end{enumerate}
\end{prop}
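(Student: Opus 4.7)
For part (1), the key observation is that $\sym^{\leq 1}(U) = \{\so\} \sqcup U$ is a disjoint union with only two components, so any perverse sheaf $\FF$ decomposes as $\FF_0 \oplus \FF_1$ and any morphism as $f_0 \sqcup f_1$. Condition (a) in Definition~\ref{def:FP} forces $f_0=\id$, so $f$ is determined by $f_1=\varrho^1(f)$, giving faithfulness of $\varrho^1$. For $\varrho_1$, the space $\sym_{\leq 1}(U)=\coprod_n \sym^n_{\leq 1}(U)$ has infinitely many components; again $f^{(0)}=\id$ and $f^{(1)}=\varrho_1(f)$. For $n\geq 2$, condition (b) and the equality $\mu_{\varphi,(1,\ldots,1)}\circ f^{\boxtimes_\varphi n}=b_\varphi^*(f)\circ \nu_{\varphi,(1,\ldots,1)}$ (after reordering) force the restriction of $f^{(n)}$ to each open set $b_{\varphi,(1,\ldots,1)}(U^n)$ to be determined by $(f^{(1)})^{\boxtimes_\varphi n}$. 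Since any $n$ distinct points in $U$ admit $n$ pairwise-disjoint cubes (one around each), the open sets $b_{\varphi,(1,\ldots,1)}(U^n)$ cover $\sym^n_{\leq 1}(U)$ as $\varphi$ ranges over $E_2(n)$, so $f^{(n)}$ is determined globally.

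For part (2), $\sym_{\leq 0}(U)=\sym^{\leq 0}(U)=\{\so\}$ (since $\sym^n_{\leq 0}(U)=\emptyset$ for $n\geq 1$), so a perverse sheaf is an object of $\Vu$. Example~\ref{ex:F0}(\ref{item:0}) provides an isomorphism $\mu_{\varphi_0}\colon {\mathbf 1}_\Vu\to V$, showing every object is isomorphic to $({\mathbf 1}_\Vu,\id)$. Condition (a) then forces the unique morphism $(V,\mu)\to(W,\nu)$ to be $\nu_{\varphi_0}\circ\mu_{\varphi_0}^{-1}$. Hence both categories are equivalent to the one-object, one-morphism category.

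For part (3), faithfulness is already in (1). For essential surjectivity, given $V\in\Vu$ define $\FF$ on $\sym^{\leq 1}(U)$ by $\FF_0={\mathbf 1}_\Vu$ and $\FF_1=V[1]$ (the constant perverse sheaf on the contractible space $U$). The space $(\sym^{\leq 1}(U))^n({\leq 1})$ splits as the point $(\so,\ldots,\so)$ together with the $n$ open pieces $U_l=\{\so\}^{l-1}\times U\times\{\so\}^{n-l}$ for $l=1,\ldots,n$. On the point, both $\FF^{\boxtimes_\varphi n}$ and $b_\varphi^*\FF$ identify with ${\mathbf 1}_\Vu$; on $U_l$, after the unit-constraint identifications $\otimes_\varphi({\mathbf 1}_\Vu,\ldots,V,\ldots,{\mathbf 1}_\Vu)\simeq V$, the sheaf $\FF^{\boxtimes_\varphi n}$ identifies with $V[1]$, and $b_\varphi^*\FF$ equals $\varphi_l^*(V[1])\simeq V[1]$ since $\varphi_l$ is an embedding into the contractible $U$. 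Define $\mu_\varphi$ as these canonical isomorphisms. Axioms (a) and (c) are immediate from functoriality and the fact that our definition uses only canonical iso's in the braided category, and Example~\ref{ex:F0} guarantees consistency with $n=0,1$. For fullness, a morphism $f_1\colon V\to W$ induces $f\colon\FF\to\GG$ with $f_0=\id$ and $f_1[1]$ on $U$; condition (b) of Definition~\ref{def:FP} is automatic since $\mu_\varphi$ and $\nu_\varphi$ are given by the same canonical identifications that transform naturally under the functor $f^{\boxtimes_\varphi n}$.

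The main obstacle is the verification of the monodromy-braiding compatibility (Definition~\ref{def:facdat}(\ref{b})) for the factorization data constructed in part (3). The braiding $\underline{R}_\gamma$ restricted to $U_l$ involves permuting the position of the unique non-trivial factor $V$ among many copies of ${\mathbf 1}_\Vu$; one must check that the associated composite of unit-constrained braidings $R_{V,{\mathbf 1}_\Vu}=\id_V$, $R_{{\mathbf 1}_\Vu,V}=\id_V$ exactly matches the geometric monodromy $M_{\gamma}$ of the constant sheaf $V[1]$ under the induced path of embeddings $\varphi_l\rightsquigarrow\psi_{l'}$ in $E_2(1)$. This reduces to Remark~\ref{piE1} and the fact that the monodromy along any loop in the contractible $E_2(1)$-component is trivial, so both sides collapse to the same canonical identification of $V[1]$ with itself.
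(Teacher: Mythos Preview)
Your proof is correct and follows essentially the same strategy as the paper's own argument: faithfulness via the covering of $\sym^n_{\neq}(U)$ by images of $b_{\varphi,(1^n)}$, triviality of the $d=0$ case via Example~\ref{ex:F0}, and the construction of a factorization datum on $(\mathbf{1}_\Vu,V[1])$ using the canonical identifications forced by Remark~\ref{rem:phi-1} and Example~\ref{ex:F0}\eqref{item:mu-1}. The paper's proof is in fact terser than yours on the monodromy--braiding verification in part~(3), simply deferring to those same references; your explicit reduction to paths in the contractible $E_2(1)$ and triviality of $R_{V,\mathbf{1}_\Vu}$ makes the argument more transparent. One small slip: in part~(1) the compatibility square reads $\nu_\varphi\circ f^{\boxtimes_\varphi n}=b_\varphi^*(f)\circ\mu_\varphi$, not the order you wrote, but this is cosmetic.
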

\begin{proof}
(1) By Example \ref{ex:F0}, a morphism $f\colon\FF\to\GG$ in $\FP^{\leq1}$ is a pair $(f_0,f_1)$ where $f_0=\id_{\mathbf 1_\Vu}$ and  $f_1\colon\FF_1\to\GG_1$ is a morphism of objects in $\Vu$, hence $\varrho^1$  is faithful. 

We turn to $\varrho_1$. 
Observe that, for any $n\in\NN_{\geq1}$, any point in $\sym^n_{\leq 1}(U)=\sym^n_{\neq}(U)$ has a neighbourhood of the form $b_{\varphi,(1^n)}(\sym^1(U)^n)$. If two morphisms $f,f'\colon \FF\to\GG$ in $\FP_{\leq1}$ coincide on $\FF_1$, then they coincide on any such neighbourhood by the restriction of \eqref{eq:def-morphisms} to $(\sym^1(U))^n$.

(2) By definition $\FP_{\leq0}=\FP^{\leq0}$. 
Example \ref{ex:F0} \eqref{item:0} shows that $\FP^{\leq0}$ has only the trivial object, and by definition of morphisms, the only morphism is the identity.

(3)  Any object $V$ in $\Vu$,  viewed as a complex concentrated in degree $-1$ is a perverse sheaf on $\sym^1(U)$, so $({\mathbf 1}_\Vu,V)\in\PPP^{\leq1}(1)$ by Example \ref{ex:F0} \eqref{item:0}. 
We equip $({\mathbf 1}_\Vu,V)$ with a factorization datum $\nu$ as follows. Let $\varphi\in E_2(n)$. If $n=0$, then $\nu_{\varphi}:=\id_{{\mathbf 1}_\Vu}$; if $n=1$, then $\nu_{\varphi}$ is defined as to satisfy \eqref{eq:mon-bra}; if $n\geq2$, then $\nu_{\varphi}$ is as in Example \ref{ex:F0} \eqref{item:mu-1}. By construction, $\nu$ is compatible with operadic composition and symmetric equivariance. 
Thus $(({\mathbf 1}_\Vu,V),\nu)\in\FP^{\leq 1}$ and $\varrho^1(({\mathbf 1}_\Vu,V),\nu)=V$ so $\varrho^1$ is essentially surjective. Let now $(\FF,\mu)$, and  $(\GG,\nu)$ be objects in $\FP^{\leq 1}$, and let $f_1\colon \FF_1\to\GG_1$ be a morphism in $\Vu$. Then, setting $f_0\colon {\mathbf 1}_\Vu\to {\mathbf 1}_\Vu$ gives a morphism $f\colon \FF\to\GG$ in $\PPP^{\leq1}(1)$. Condition \eqref{eq:def-morphisms} is satisfied for $\varphi\in E_2(0)$ by construction, for $\varphi\in E_2(1)$ by naturality of braiding and monodromy and \eqref{eq:mon-bra}, and for $\varphi\in E_2(n)$ for $n\geq 2$ in virtue of Example \ref{ex:F0} \eqref{item:mu-1}. Hence, $\varrho^1$ is full.  
\end{proof}

\begin{remark}{\rm The category $\FP$ is not monoidal: indeed by \cite{KS3} it is equivalent to the category $\mathcal{CB}(\Vu)$ of connected bialgebras in $\Vu$, which is not monoidal if $\Vu$ is not symmetric, see for instance \cite[Proposition 1.10.12, Corollary 1.6.10]{HS}. Even if by Proposition \ref{prop:varrho} we can equip $\FP^{\leq 1}$ with a braided monoidal structure,
we do not expect $\FP^{\leq d}$ nor $\FP_{\leq d}$ to have a monoidal structure for $d>1$. }
\end{remark}

The compatibility between braiding and monodromy extends to all components of a factorization datum. 

\begin{prop}\label{prop:mon-bra-basta1} Let $d\in\NN$ and let $\mathcal{F}$ be an object of $\mathcal{P}(1)$,  $\mathcal{P}_{\leq d}(1)$,   $\mathcal{P}^{\leq d}(1)$, respectively and let $\nu=(\nu_{\varphi})_{\varphi\in E_2(n), n\in\NN}$ be a collection of isomorphisms
$\FF^{\boxtimes_\varphi n}\to b_{\varphi}^*(\FF)$ of perverse sheaves on $\sym(U)^n,\,\sym_{\leq d}(U)^n$ or $\sym(U)^n(\leq d)$ satisfying \eqref{eq:mon-bra} and \eqref{eq:operadic} in the appropriate spaces.
Then, for every path $\gamma$ in $E_2(n)$ joining $\varphi$ to $\psi$, the diagram below is commutative
        \begin{equation}\label{eq:mon-bra-extended}
            \begin{tikzcd}[row sep=huge]
 \mathcal{F}^{\boxtimes_{\varphi} n} 
\arrow[rr, "\nu_\varphi"] 
\arrow[d, swap, "\underline{R}_{\gamma,\FF^n}"]& 
& b_\varphi^\ast \mathcal{F}
\arrow[d, "M_{\gamma,\FF}"]\\
 \mathcal{F}^{\boxtimes_{\psi} n}
\arrow[rr, "\nu_{\psi}"]  & 
& b_{\psi}^\ast \mathcal{F}
\end{tikzcd} 
        \end{equation}
where: $\underline{R}_{\gamma,\FF^n}$ is the component relative to $(\FF,\,\ldots,\,\FF)$ of the isomorphism of functors  $\underline{R}_\gamma$  as in \eqref{eq:defR} and $M_{\gamma,\FF}$ is the component relative to $\FF$ of the isomorphism of functors   as in  Lemma \ref{lem:mndrmy}.
\end{prop}
\begin{proof}By Remark \ref{piE1} (2) it is enough to consider the case in which there exist
$\theta\in E_2(n)$ and $\varphi_j,\psi_l\in E_2(1)$ for $j,l=1,\,\ldots,\,n$ such that
$\varphi=\theta\circ(\varphi_1,\ldots,\,\varphi_n)$, $\psi=\theta\circ(\psi_1,\ldots,\,\psi_n)$  and such that $\gamma=\theta\circ(\gamma_1,\ldots,\,\gamma_n)$, for some paths
$\gamma_j$ in $E_2(1)$ from $\varphi_j$ to $\psi_j$, for $j=1,\,\ldots,\,n$.

By Remark \ref{rem:erre-operadic} and \eqref{eq:operadic}, commutativity of  \eqref{eq:mon-bra-extended} is equivalent to commutativity of
 \begin{equation}\label{eq:doppio}
            \begin{tikzcd}[row sep=huge]
\boxtimes_{\theta}\left(\boxtimes_{\varphi_l}(\FF)_{l=1}^n\right) \arrow[rr, "\boxtimes_\theta(\nu_{\varphi_l})_{l=1}^n"] \arrow[d, swap, "\boxtimes_\theta(\underline{R}_{\gamma_l,\FF})_{l=1}^n"]&&\boxtimes_\theta (b_{\varphi_l}^*(\FF))_{l=1}^n\arrow[d,"\boxtimes_{\theta}(M_{\gamma_l,\FF})_{l=1}^n"]\arrow[rr,"(\prod_{l=1}^nb_{\varphi_l})^*(\nu_\theta)"]&&b_{\theta\circ(\varphi_1,\ldots,\,\varphi_n)}^*(\FF)
\arrow[d, "M_{\gamma,\FF}"]\\
\boxtimes_{\theta}\left(\boxtimes_{\psi_l}(\FF)_{l=1}^n\right) \arrow[rr, "\boxtimes_\theta(\nu_{\psi_l})_{l=1}^n"] &&\boxtimes_\theta (b_{\psi_l}^*(\FF))_{l=1}^n\arrow[rr,"(\prod_{l=1}^nb_{\psi_l})^*(\nu_\theta)"]&&b_{\theta\circ(\psi_1,\ldots,\,\psi_n)}^*(\FF)
\end{tikzcd} 
        \end{equation}
        on $\sym(U)^n$, $\sym_{\leq d}(U)^n$, or $\sym(U)^n(\leq d)$. 
The right square of \eqref{eq:doppio} commutes in virtue of Lemma \ref{lem:compat-a-M-factorized}, so it is enough to prove commutativity of the diagram below for each $l=1,\,\ldots,\,n$
\begin{equation}
            \begin{tikzcd}[row sep=huge]
\boxtimes_{\varphi_l}\FF\arrow[rr, "\nu_{\varphi_l}"] \arrow[d, swap, "\underline{R}_{\gamma_l,\FF}"]&&b_{\varphi_l}^*(\FF)\arrow[d,"M_{\gamma_l,\FF}"]\\
\boxtimes_{\psi_l}\FF\arrow[rr, "\nu_{\psi_l}"]&&b_{\psi_l}^*(\FF) 
\end{tikzcd} 
        \end{equation}
 The latter follows from \eqref{eq:mon-bra} and compatibility of monodromy and braiding with composition of paths, once we decompose  the homotopy class of $\gamma_l$ as the composition  of the unique homotopy class of paths from $\varphi_l$ to $\id_U$ with the inverse of the unique homotopy class of paths from $\psi_l$ to $\id_U$. 
\end{proof}

\subsection{Vertical factorization data}

The definition of factorization data requires the existence of isomorphisms for any $\varphi\in E_2(n)$, which might be hard to verify. It is therefore convenient to determine a more controllable set of requirements ensuring existence of a factorization datum. We show in this section how one can deduce the existence of a factorization datum once  isomorphisms are given for any $\varphi$ running in the smaller set $E^v_2(n)$ of vertically disjoint linear embeddings as in Definition \ref{def:linearemb}.

\begin{defn}\label{def:vfact_dat}
     Let $d\in\NN$ and let $\mathcal{F}$ be an object of $\mathcal{P}(1)$,  $\mathcal{P}_{\leq d}(1)$,  $\mathcal{P}^{\leq d}(1)$, respectively.

     A {vertical} factorization datum on $\mathcal{F}$ is the datum of a family of isomorphisms
    \[
    \mu_\varphi : 
    \mathcal{F}^{\boxtimes_\varphi n}
    \rightarrow
    b_\varphi^\ast \mathcal{F}, \ \ 
    \varphi \in E^v_2(n)
    \]
    satisfying: 
    \eqref{eq:mon-bra}, \eqref{eq:operadic} and \eqref{eq:equivariance} for 
    $\varphi, \eta, \psi_1, \ldots, \psi_n$ {\rm vertically disjoint} linear embeddings, together with commutativity of diagram \eqref{eq:mon-bra-extended} whenever $\varphi\in E_2(2)$ and $\gamma$ is the elementary braiding in $\Pi_1(E^v_2(2))$ from $\varphi$ to $\psi=\varphi^{(1,2)}$.  

\medskip

We denote by $\mathrm{FD}^v(\mathcal{F})$  the set of vertical factorization data on $\mathcal{F}$.
\end{defn}

\begin{defn}\label{def:vertical-categories} Let $d\in\NN$ and let $Y$ be either $\sym(U)$ or $\sym_{\leq d}(U)$ or $\sym^{\leq d}(U)$.
    A vertically factorized perverse sheaf on $Y$ is a pair $(\mathcal{F}, \mu)$, where $\mathcal{F}$ is a perverse sheaf on $Y$, and  $\mu\in$ $\mathrm{FD}^v(\mathcal{F})$. We define the categories
 $\mathcal{FP}^v$,  $(\mathcal{FP}_{\leq d})^v$, and $(\mathcal{FP}^{\leq d})^v$, of vertically factorized perverse sheaves on $\sym(U)$, $\sym_{\leq d}(U)$ and $\sym^{\leq d}(U)$, respectively, where objects are vertically factorized perverse sheaves and morphisms are morphisms of perverse sheaves that are the identity in degree $0$ and satisfy \eqref{eq:def-morphisms} for  $\mu$ and $\nu$  vertical factorization data.
\end{defn}

The definition of vertical  factorization datum in Definition~\ref{def:vfact_dat} requires commutativity of diagram \eqref{eq:mon-bra-extended} only for the simple braiding of $E_2 (2)$. The following lemma shows that this property together with \eqref{eq:mon-bra} and \eqref{eq:operadic} implies compatibility of braiding and monodromy  for  paths in $E_2(n)$ for any $n\geq 2$.

\begin{lem}\label{lem:vbfd}Let $d\in\NN$ and let $Y$ be either $\sym(U)$ or $\sym_{\leq d}(U)$ or $\sym^{\leq d}(U)$.
    Let $\FF$ be a perverse sheaf on $Y$ and  let $\mu \in FD^v (\mathcal{F})$. Then, for any  $n\geq2$ and  any $\varphi,\,\psi\in E_2^v(n)$, 
       the  diagram \eqref{eq:mon-bra-extended} commutes
     for any morphism $\gamma$ from $\varphi$ to $\psi$ in $(\Pi_1 E_2(n))^v$, i.e., for any homotopy class of paths in $E_2(n)$ from $\varphi$ to $\psi$.
\end{lem}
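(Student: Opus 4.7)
The plan is to reduce the braiding--monodromy compatibility for an arbitrary $\gamma$ to the single case assumed in Definition \ref{def:vfact_dat}(b), namely the elementary braiding in $E_2^v(2)$. The reduction has two layers: first, by composition of paths, one reduces to the elementary braidings $\gamma_l$ in $E_2^v(n)$; second, by operadic decomposition, one reduces each $\gamma_l$ to the elementary braiding in $E_2^v(2)$.

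For the first layer, recall from the discussion preceding Definition \ref{def:linearemb} that every morphism in $(\Pi_1 E_2)^v(n)$ is a composition of elementary braidings. Both $\underline{R}_\gamma$ (by Theorem \ref{thm:del} and the construction in \eqref{eq:defR}) and $M_\gamma$ (by Lemma \ref{lem:mndrmy}) are compatible with composition of paths, while $\mu_\varphi$ depends only on the endpoint $\varphi$. Consequently, if \eqref{eq:mon-bra} commutes for each factor in a decomposition $\gamma = \gamma_k \circ \cdots \circ \gamma_1$, the vertical concatenation of the corresponding diagrams gives commutativity for $\gamma$. It therefore suffices to treat the case where $\gamma = \gamma_l$ is the $l$-th elementary braiding $\varphi \to \varphi^{(l,l+1)}$ in $E_2^v(n)$.

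For the second layer, since the cubes of $\varphi \in E_2^v(n)$ have pairwise disjoint vertical projections and the positions $l, l{+}1$ are adjacent in the vertical order, I can choose a rectangle $R \subset U$ whose vertical projection contains those of $\varphi_l(U)$ and $\varphi_{l+1}(U)$ and is disjoint from those of the remaining $\varphi_j(U)$. This yields a decomposition
\[
\varphi = \theta \circ (\mathrm{Id}_U, \ldots, \mathrm{Id}_U, \psi, \mathrm{Id}_U, \ldots, \mathrm{Id}_U),
\]
where $\theta \in E_2^v(n-1)$ has $R$ as its $l$-th cube (and the other cubes coinciding with the remaining $\varphi_j(U)$), and $\psi \in E_2^v(2)$ is the inclusion of $\varphi_l(U), \varphi_{l+1}(U)$ into $R$. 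The elementary braiding $\gamma_l$ is then the operadic composition of the constant path at $\theta$ with the tuple of paths whose $l$-th component is the elementary braiding $\gamma' \colon \psi \to \psi^{(1,2)}$ in $E_2^v(2)$ and whose other components are constant at $\mathrm{Id}_U$; verticality is preserved throughout.

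Assembling the three ingredients, Remark \ref{rem:erre-operadic} expresses $\underline{R}_{\gamma_l}$ as the image under $\boxtimes_\theta$ of $\underline{R}_{\gamma'}$ (with identities in the other slots), Lemma \ref{lem:compat-a-M-factorized} does the analogous thing for $M_{\gamma_l, \mathcal{F}}$ in terms of $M_{\gamma', \mathcal{F}}$, and condition~(\ref{a}) of Definition \ref{def:facdat} expresses $\mu_\varphi$ and $\mu_{\varphi^{(l,l+1)}}$ via $\mu_\theta, \mu_\psi, \mu_{\psi^{(1,2)}}$, and the unary $\mu_{\mathrm{Id}_U}$ (trivial by Remark \ref{rem:phi-1}). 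The diagram \eqref{eq:mon-bra} for $\gamma_l$ is therefore obtained by applying $\boxtimes_\theta$ and the pullback along $\prod_j b_{\psi_j}$ to the diagram \eqref{eq:mon-bra} for $\gamma'$, which commutes by hypothesis. The main technical burden will be coherently threading together the three operadic decompositions and the canonical identifications they entail; this is purely a matter of bookkeeping supplied by the cited results.
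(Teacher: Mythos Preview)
Your argument follows the paper's: reduce to elementary braidings via compatibility of $\underline{R}_\gamma$ and $M_\gamma$ with composition of paths, then operadically decompose each elementary braiding to isolate the $n=2$ case, and assemble the square \eqref{eq:mon-bra} from Remark~\ref{rem:erre-operadic}, Lemma~\ref{lem:compat-a-M-factorized}, and condition~(\ref{a}). Your decomposition $\varphi=\theta\circ(\mathrm{Id}_U,\ldots,\psi,\ldots,\mathrm{Id}_U)$ with $\theta\in E_2^v(n-1)$ is a harmless variant of the paper's $\varphi=\theta\circ(\theta_1,\varepsilon,\theta_2)$ with $\theta\in E_2^v(3)$; both yield the same three-rung ladder of commutative squares.

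One slip: you assert that ``the positions $l,l{+}1$ are adjacent in the vertical order,'' but $l$ and $l{+}1$ index the inputs of $\varphi$, not vertical heights, so this can fail---and then no cube $R$ with the required vertical projection exists, so your $\theta\in E_2^v(n-1)$ is unavailable. (The paper's three-group decomposition carries the analogous hidden constraint that $\{1,\ldots,l-1\}$, $\{l,l{+}1\}$, $\{l{+}2,\ldots,n\}$ occupy contiguous vertical ranges.) The remedy is to observe that it suffices to take as generators only those elementary braidings for which the two swapped cubes \emph{are} vertically adjacent: these still generate $(\Pi_1 E_2)^v(n)$, and for them the operadic decomposition goes through. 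You also do not treat $n\le1$; for $n=1$ there are no elementary braidings yet non-identity morphisms exist, and the paper disposes of this case separately via Remark~\ref{rem:phi-1}.
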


\begin{proof} Since the morphisms in the fundamental groupoid $\Pi_1 E_2(n)$ are compositions of elementary braidings, that are paths 
from $\varphi\in E_2^v(n)$ to $\varphi^{(l,l+1)}$ for some $l\in\{1,\,\ldots,n-1\}$,  it is sufficient to show commutativity of \eqref{eq:mon-bra-extended} for such paths. Operadic composition guarantees that we can decompose $\varphi$ and  $\varphi^{(l,l+1)}$ as follows \[
    \varphi = \theta \circ (\theta_1, \varepsilon, \theta_2) \ \ \mathrm{ and }\ \ 
    \varphi^{(l,l+1)} = \theta \circ (\theta_1,  \varepsilon^{(1,2)}, \theta_2) 
    \]
    for appropriate $\theta\in E_2^v(3),  \theta_1\in E_2^v(l-1), \theta_2\in E_2^v(n-l-1)$, and $\varepsilon\in E_2^v(2)$.

    \medskip

    Let $\sigma$ be the elementary braiding of $\Pi_1 E_2(2)$ joining $\varepsilon \in E_2^v (2)$ to  $\varepsilon^{(1,2)}$ and let $\gamma$ be the elementary braiding starting at $\varphi$ and ending at $\varphi^{(l,l+1)}$, obtained 
  operadically as
    \[
    \gamma(t) = \theta \circ ({\theta_1}, {\sigma}(t), {\theta_2}), \quad \mbox{ for }t\in[0,1].
    \]
  
  Equation \eqref{eq:operadic}  gives 
    \begin{equation}\label{eq:uno}\mu_\varphi = (b_{\theta_1}^*,b_\varepsilon^*, b_{\theta_2}^*)(\mu_\theta) \circ \boxtimes_\theta (\mu_{\theta_1}, \mu_{\varepsilon}, \mu_{\theta_2}),\end{equation}
    \begin{equation}\label{eq:due}\mu_{\varphi^{(l,l+1)}} = (b_{\theta_1}^*,b_{\varepsilon^{(1,2)}}^*, b_{\theta_2}^*)(\mu_\theta) \circ \boxtimes_\theta (\mu_{\theta_1}, \mu_{\varepsilon^{(1,2)}}, \mu_{\theta_2}).\end{equation}  
We  consider then the diagram   
\begin{equation}
  \begin{tikzcd}
  \mathcal{F}^{\boxtimes_{\varphi} n} \arrow[rrr, "\mu_\varphi"] \arrow[d, swap,"\equiv"]
   &&& b_\varphi^\ast \mathcal{F}\arrow{d}{(b^*_{\theta_1},b_{\varepsilon}^*,b_{\theta_2}^*)(\mu_\theta)^{-1}}\\
\boxtimes_{\theta}(\FF^{\boxtimes_{\theta_1}(l-1)},\FF^{\boxtimes_{\varepsilon}2},\FF^{\boxtimes_{\theta_2}(n-l-1)})
\arrow{d}[swap]{\boxtimes_{\theta}(\id,\underline{R}_{\sigma,\FF^2},\id)} 
\arrow{rrr}{\boxtimes_\theta(\mu_{\theta_1},\mu_{\varepsilon},\mu_{\theta_2})}
&&&\boxtimes_{\theta}(b_{\theta_1}^*\FF, b_{\varepsilon}^*\FF,b_{\theta_2}^*\FF) 
\arrow{d}{\boxtimes_{\theta}(\id,M_{\sigma,\FF},\id)} \\
\boxtimes_{\theta}(\FF^{\boxtimes_{\theta_1}(l-1)},\FF^{\boxtimes_{\varepsilon^{(1,2)}}2},\FF^{\boxtimes_{\theta_2}(n-l-1)})
\arrow[d, swap, "\equiv"] \arrow{rrr}{\boxtimes_\theta(\mu_{\theta_1},\mu_{\varepsilon^{(1,2)}},\mu_{\theta_2})}
&&&\boxtimes_{\theta}(b_{\theta_1}^*\FF, b_{\varepsilon^{(1,2)}}^*\FF,b_{\theta_2}^*\FF)
\arrow{d}{(b^*_{\theta_1},b^*_{\varepsilon^{(1,2)}},b^*_{\theta_2})(\mu_\theta)}
\\
  \mathcal{F}^{\boxtimes_{\varphi^{(l,l+1)}} n} 
\arrow[rrr, "\mu_{\varphi^{(l,l+1)}}"]&&& b_{\varphi^{(l,l+1)}}^\ast\FF
\end{tikzcd}
\end{equation}
The middle square is obtained by applying $\boxtimes_\theta$ to the commutative square \eqref{eq:mon-bra-extended} for $\sigma$, 
making use of the fact that braiding and monodromy are trivial on the constant paths$\theta_1$ and $\theta_2$. 
The bottom and upper squares are \eqref{eq:uno} and \eqref{eq:due}, hence they are commutative. Now, the composition of the left vertical arrows is $\underline{R}_{\gamma,\FF^n}$ by Remark \ref{rem:erre-operadic}. Finally, $M_{\gamma,\FF}$ is  the composition of the right vertical arrows in virtue of Lemma \ref{lem:compat-a-M-factorized}. Commutativity of the external square gives the claim. 
\end{proof}

\begin{prop}\label{prop:extension}
Let $\mathcal{F}\in \mathcal{P}(1)$, $\mathcal{P}^{\leq d}(1)$ or  $\mathcal{P}_{\leq d}(1)$ and let $\mu\in\mathrm{FD}^v(\FF)$. Then, $\mu$ extends uniquely to an element $\widetilde{\mu}\in \mathrm{FD}(\FF)$.
\end{prop}
\begin{proof}
Let $n\in\NN$ and $\varphi\in E_2(n)$. Then there is a $\psi\in E_2^v(n)$ and a path $\gamma$ in $E_2(n)$ from $\varphi$ to $\psi$. Then $\mu_\psi$ is defined. 
 Imposing commutativity of diagram \eqref{eq:mon-bra-extended} uniquely determines an isomorphism $\Tilde{\mu}_\varphi\colon\FF^{\boxtimes_\varphi}\to b_\varphi^*(\FF)$. 

\medskip
 
 Lemma~\ref{lem:vbfd} ensures that the vertical factorization datum $\mu$ satisfies \eqref{eq:mon-bra-extended} for every path joining two vertically disjoint linear embeddings. This, combined with compatibility of $\underline R$ and $M$ with composition of paths, implies that $\Tilde{\mu}_\varphi$ does not depend on the choices of $\psi$ and $\gamma$, and that the collection $(\Tilde{\mu}_\varphi)_{\varphi\in E_2(n), n\in\NN}$ extends $\mu$. As a consequence, it satisfies \eqref{eq:mon-bra}.

 \medskip

We verify compatibility with operadic composition of $\widetilde{\mu}$. For $n\in\NN$, and $l=1,\,\ldots,\, n$ let $m_l\in\NN$ and  $\varphi\in E_2(n)$, $\psi_l\in E_2(m_l)$. Let then $\varphi'\in E^v_2(n)$, $\psi'_l\in E_2^v(m_l)$, and let $\gamma_0$ be a path in $E_2(n)$ from $\varphi$ to $\varphi'$ and for each $l$, let $\gamma_l$ be a path from $\psi_l$ to $\psi_l'$, so $\gamma:=\gamma_0\circ(\gamma_1,\,\ldots,\,\gamma_l)$ is a path from $\varphi\circ (\psi_1,\ldots,\psi_l)$ to $\varphi'\circ (\psi'_1,\ldots,\psi'_l)$.

We consider the diagram
\begin{equation}\label{eq:diagrammone}
\begin{tikzcd}[scale=0.5]
\boxtimes_\varphi(\FF^{\boxtimes_{\psi_l}m_l})_{l=1}^n\arrow[r,"\boxtimes_\varphi(\widetilde{\mu}_{\psi_l})_{l=1}^n"]\arrow[dd,"\boxtimes_{\varphi}(\underline{R}_{\gamma_l,\FF^{m_l}})_{l=1}^n"]&\boxtimes_\varphi(b_{\psi_l}^*\FF)_{l=1}^n\arrow[r,"\simeq"]\arrow[dd,"\boxtimes_{\varphi}(M_{\gamma_l,\FF})_{l=1}^n"]&(\prod_{l=1}^nb_{\psi_l})^*\FF^{\boxtimes_\varphi n}\arrow[rr,"(\prod_{l=1}^nb_{\psi_l})^*\widetilde\mu_{\varphi}"]\arrow[dd,  "(\prod_{l=1}^nb_{\psi_l})^*\underline{R}_{\gamma_0,\FF^n}"]
&&(\prod_{l=1}^nb_{\psi_l})^*b^*_\varphi\FF\arrow[dd, near start, swap, above=5pt, "(\prod_{l=1}^nb_{\psi_l})^*M_{\gamma_0,\FF}"]\\
\\
\boxtimes_\varphi(\FF^{\boxtimes_{\psi'_l}m_l})_{l=1}^n\arrow[r,"\boxtimes_\varphi({\mu}_{\psi'_l})_{l=1}^n"]\arrow[dd,"\underline{R}_{\gamma_0,(\boxtimes_{\psi_l'}^*\FF^{m_l})_{l=1}^n}"]   &\boxtimes_\varphi(b_{\psi'_l}^*\FF)_{l=1}^n\arrow[dd,"\underline{R}_{\gamma_0,(b_{\psi_l'}^*\FF)_{l=1}^n}"] &(\prod_{l=1}^nb_{\psi_l})^*\FF^{\boxtimes_{\varphi'} n}\arrow[rr,"(\prod_{l=1}^nb_{\psi_l})^*\mu_{\varphi'}"]\arrow[dd,  "M_{(\gamma_l)_{l=1}^n,\boxtimes_{\varphi'}\FF^n}"]&&(\prod_{l=1}^nb_{\psi_l})^*b^*_{\varphi'}\FF\arrow[dd,swap, "M_{(\gamma_l)_{l=1}^n,b_{\varphi'}^*\FF}"]\\
\\
\boxtimes_{\varphi'}(\FF^{\boxtimes_{\psi'_l}m_l})_{l=1}^n\arrow[r,"\boxtimes_{\varphi'}({\mu}_{\psi'_l})_{l=1}^n"]&\boxtimes_{\varphi'}(b_{\psi'_l}^*\FF)_{l=1}^n\arrow[r,"\simeq"]&(\prod_{l=1}^nb_{\psi'_l})^*\FF^{\boxtimes_{\varphi'} n}\arrow[rr,"(\prod_{l=1}^nb_{\psi'_l})^*\mu_{\varphi'}"]&&(\prod_{l=1}^nb_{\psi'_l})^*b^*_{\varphi'}\FF
\end{tikzcd}
\end{equation}
The upper left and right squares of \eqref{eq:diagrammone} commute by definition of $\widetilde{\mu}_{\psi_l}$ for $l=1,\,\ldots,\,n$, and $\widetilde{\mu}_\varphi$, respectively. 
The lower left and right squares commute by naturality of $\underline{R}$ and $M$, respectively. 
The middle vertical square commutes by naturality of monodromy, and the equality $(\prod_{l=1}^nb_{\psi_l})^*\underline{R}_{\gamma_0,\FF^n}=\underline{R}_{\gamma_0,(b_{\psi_l}^*\FF)_{l=1}^n}$, due to functoriality of $\underline{R}$. Hence, the outer square of of \eqref{eq:diagrammone} commutes. 
The composition of the lower horizontal arrows is $\mu_{\varphi'\circ(\psi_1',\,\ldots,\,\psi'_n)}$ because $\mu$ is compatible with operadic composition. The composition of the left and right vertical arrows are, respectively,  $\underline{R}_{\gamma,\FF^{\sum_{l=1}^nm_l} }$ and $M_{\gamma,\FF}$ because $\underline{R}$ and $M$ are compatible with  composition of paths. Hence, the composition of the upper horizontal arrows is $\widetilde{\mu}_{\varphi\circ(\psi_1,\,\ldots,\,\psi_n)}$ by definition of $\widetilde{\mu}$, giving \eqref{eq:operadic}. 

\medskip

 We verify symmetric equivariance for $\Tilde{\mu}$. For $n\geq2$, let $\varphi\in E_2(n)$ and $\psi\in E_2^v(n)$ be joined by the path $\gamma$, so $\widetilde{\mu}_\varphi:=M_{\gamma,\FF}^{-1}\circ\mu_\psi\circ \underline{R}_{\gamma,\FF^n}$. For $\sigma\in \Sn$ the path $\gamma^{\sigma}$ joins $\varphi^\sigma$ to $\psi^\sigma$ and so \(\widetilde{\mu}_{\varphi^{\sigma}}:=M_{\gamma^{\sigma},\FF}^{-1}\circ\mu_{\psi^\sigma}\circ \underline{R}_{\gamma^\sigma\FF^n}\).
 Now, for $\sigma$ acting on $\sym(U)^n$ we have   $\sigma^*(\mu_\psi)=\mu_{\psi^\sigma}$ by definition of vertical factorization datum. In addition, \(\underline{R}_{\gamma^\sigma,\FF^n}=\sigma^*(\underline{R}_{\gamma,\FF^n})\) by Lemma \ref{lem:equivarianceRbox} and 
 $M_{\gamma^{\sigma},\FF}=\sigma^*(M_{\gamma,\FF})$ by Lemma \ref{lem:mndrmy}. 
 Hence
 \[\widetilde{\mu}_{\varphi^{\sigma}}:=\sigma^*(M^{-1}_{\gamma,\FF})\circ \sigma^*(\mu_{\psi})\circ \sigma^*(\underline{R}_{\gamma,\FF^n})=\sigma^*(\widetilde{\mu}_\varphi).\]

We prove uniqueness. Assume $\widetilde\mu$ and $\widetilde\mu^{\prime}$ are  in $\mathrm{FD}(\mathcal{F})$ and that $\widetilde\mu_\psi=\widetilde\mu'_\psi=\mu_\psi$ for all $\psi\in E_2^v(n)$. Let $\varphi\in E_2(n)$, for $n\in\NN$ and let $\gamma$ be a path in $E_2(n)$ from $\varphi$ to some $\psi\in E_2^v(n)$. By Proposition \ref{prop:mon-bra-basta1} we have the following commutative diagram:
\[\begin{tikzcd}
	{\mathcal{F}^{\boxtimes_{\varphi}n}} && {a^{\ast}_{\varphi}(\mathcal{F})} \\
	\\
	{\mathcal{F}^{\boxtimes_{\psi}n}} && {a^{\ast}_{\psi}(\mathcal{F})}
	\arrow["{\underline{R}_{\gamma,\FF^n}}"', from=1-1, to=3-1]
	\arrow["{M_{\gamma,\FF}}", from=1-3, to=3-3]
	\arrow["{\mu_{\psi}=\mu_{\psi}^{\prime}}"', from=3-1, to=3-3]
	\arrow["{\mu_{\varphi}^{\prime}}"', shift right=1, from=1-1, to=1-3]
	\arrow["{\mu_{\varphi}}", shift left=1, from=1-1, to=1-3]
\end{tikzcd}\]
Since $\underline{R}_{\gamma,\FF^n}$ and $M_{\gamma,\FF}$ are isomorphisms, we conclude that  $\mu_{\varphi}=\mu_{\varphi}^{\prime}$. \end{proof}
\begin{prop}\label{prop:factvb}
Let $d\in\NN$. The forgetful functors 
\begin{equation}\label{eq:forgetful}\FP^{\leq d}\to  (\mathcal{FP}^{\leq d})^v,\quad \FP_{\leq d}\to  \mathcal{FP}_{\leq d}^v, \mbox{ and } \FP\to  \mathcal{FP}^v
\end{equation}
given: 
\begin{itemize}
\item on objects by neglecting the components of the factorization data indexed by embeddings in $E_2(n)\setminus E_2^v(n)$ for $n\in\NN_{\geq 2}$, 
\item on morphisms as the identity
\end{itemize}
are isomorphisms of categories. 
%\begin{equation}\label{eq:forgetful}\FP^{\leq d}\to \, (\mathcal{FP}^{\leq d})^v,\quad \FP_{\leq d}\to  (\mathcal{FP}_{\leq d})^v, \mbox{ and } \FP\to \, \mathcal{FP}^v.
%\end{equation}
   \end{prop}
\begin{proof}
We need to prove that these functors are bijective on objects and fully faithful. 
The first property follows from Proposition \ref{prop:extension}. Faithfulness follows by construction. We show that they are full. 

Let $f\colon (\FF,\mu)\to(\GG,\nu)$ be a morphism in  $(\mathcal{FP}^{\leq d})^v$, $\mathcal{FP}_{\leq d}^v$, or  $\mathcal{FP}^v$. Let $\varphi\in E_2(n)$  and let $\gamma$ be a path in $E_2(n)$ from $\varphi$ to $\psi\in E_2^v(n)$. We consider the diagram

\begin{equation*}
 \begin{tikzcd}
  \mathcal{F}^{\boxtimes_{\varphi} n} \arrow[rr,"\underline{R}_{\gamma,\FF}"] \arrow[d, "f^{\boxtimes_\varphi n}"]&& \mathcal{F}^{\boxtimes_{\psi} n} \arrow[rr,"\mu_{\psi,\FF}"] \arrow[d, "f^{\boxtimes_\psi n}"] && b_\psi^\ast \mathcal{F} \arrow[d, "b_\psi^*(f)"] \arrow[rr, "M_{\gamma,\FF}^{-1}"]&& b_\phi^\ast \mathcal{F} \arrow[d, "b_\varphi^*(f)"]\\
   \mathcal{G}^{\boxtimes_{\varphi} n} \arrow[rr,"\underline{R}_{\gamma,\GG}"] && \mathcal{G}^{\boxtimes_{\psi} n} \arrow[rr,"\nu_{\psi,\GG}"]  && b_\psi^\ast \mathcal{G}  \arrow[rr, "M_{\gamma,\GG}^{-1}"]&& b_\varphi^\ast \mathcal{G}
 \end{tikzcd}
\end{equation*}
The left square commutes by naturality of $\underline{R}$, the middle square commutes because $\psi\in E_2^v(n)$ and the right square commutes by naturality of $M$. Since the composition of the top horizontal arrow is $\mu_\varphi$ and the composition of the bottom horizontal arrow is $\nu_\varphi$, we have commutativity of \eqref{eq:def-morphisms} for $\varphi$. Hence, the forgetful functors are full.
\end{proof}

\subsection{Restriction functors}

In this section we show that the truncation functors $(\ )^{\leq d}$ and $(\ )^{\leq d,\leq c}$, and the restriction functors induced by the open embeddings $j_{\leq d,\leq c}$ and $j_{\leq d}$  for $c\geq d\geq 0$ are well behaved with respect to factorization data. 

\begin{prop}\label{prop:def-funtori-restrizione}
Let $c,d,e\in \NN$ with $e\leq d\leq c$. The chain of restriction  functors \eqref{dgm:upperstar}  gives rise to  a chain of functors
\begin{equation}\label{dgm:upperstarFP}
\begin{tikzcd}%[row sep=huge]
\FP
\arrow[rr, swap, "(j_{\leq c})^\ast"']& 
&  \FP_{\leq c} 
\arrow[rr, swap, "(j_{\leq d,\leq c})^\ast"'] &&  \mathcal{FP}_{\leq d} 
\arrow[rr, swap, "(\phantom{a})^{\leq d}"'] &&
\FP^{\leq d} 
\arrow[rr, swap, "(\phantom{a})^{\leq e,\leq d}"']&& \mathcal{FP}^{\leq e} 
\end{tikzcd} 
\end{equation}
\end{prop}
\begin{proof}For the truncation functors $(\ )^{\leq d}$ and $(\ )^{\leq d,\leq c}$ this is immediate. Let $\mathbf j\colon Y\to Y'$ be either $j_{\leq d}$ or $j_{\leq d,\leq c}$ and let
$(\mathcal{F},\mu)$ be a factorized perverse sheaf on $Y'$.
 
The diagram \eqref{dgm-tensor-upperstar} gives
    $({\mathbf j}^n)^\ast  \mathcal{F}^{\boxtimes_\varphi n}=({\mathbf j}^\ast\mathcal{F})^{\boxtimes_\varphi n}$ for any $\varphi\in E_2(n)$ and any $n\in\NN$. Commutativity of \eqref{dgm:upperstar} allows  
    to define ${\mathbf j}^*(\mathcal {F}, \mu):=({\mathbf j}^*\mathcal{F}, \nu)$ where   $\nu_{\varphi}:=({\mathbf j}^n)^*\mu_{\varphi}$ for any $\varphi\in E_2(n)$. 
   We show that this assignment gives indeed a factorized perverse sheaf on $Y$.

\medskip

 First of all we prove braiding-monodromy compatibility for $\nu$. Let $\gamma$ be a path in $E_2(1)$ from $\varphi$ to $\id_U$, let $M'$ and $\underline{R}'$ denote monodromy and braiding relative to the space $Y'$, and $M$ and $\underline{R}$ denote monodromy and braiding relative to the space $Y$. Applying the functor $\mathbf j^*$ to the diagram \eqref{eq:mon-bra} for $\underline{R}_\gamma',M_\gamma'$ and $\mu$ gives the statement, provided 
\begin{equation}\label{eq:comp-braiding-inclusion}\mathbf j^*{M}'_{\gamma, \FF}={M}_{\gamma,\, \mathbf j^*(\FF)}\mbox{ and }\mathbf j^\ast\underline{R'}_{\gamma,\FF}=\underline{R}_{\gamma,\, \mathbf j^*(\FF)}. \end{equation} 

The former holds by construction of $M_\gamma$. Since $\mathbf j\colon Y\to Y'$ is a stratified morphism,  the functor ${\mathbf j}^*\colon D^b_c(Y',\Sigma,\Vu)\to D^b_c(Y,\Sigma,\Vu)$ is braided monoidal, where the coherence morphisms are given by the natural identifications. The definition of the outer tensor product and Theorem \ref{thm:del} ensure that $\mathbf j^*$ satisfies \eqref{eq:comp-braiding-inclusion}.

\medskip
   
    The compatibility with operadic composition \eqref{eq:operadic} in this case would read as follows:
    \begin{equation}\label{eq:restr-operadic}\nu_{\eta}=({\mathbf j}^n)^*\mu_{\eta}= (\prod_{l=1}^n b_{\psi_{l}})^*(\mathbf j^n)^* (\mu_\varphi)
    \circ \boxtimes_{\varphi}\prod_{l=1}^n({\mathbf j}^{m_l})^*(\mu_{\psi_{l}})
    \end{equation} for $\eta=\varphi\circ(\psi_1,\,\ldots,\,\psi_n)$ with $\varphi\in E_2(n)$, $\psi_l\in E_2(m_l)$ for $l=1,\,\ldots,\,n$ and $m=\sum_{l=1}^nm_l$. 
   It can be obtained by applying  $(\mathbf j^m)^*$ to the equality  \eqref{eq:operadic} for $\mu$ making use of functoriality of $(\mathbf j^m)^*$ at the level of perverse sheaves and commutativity of \eqref{dgm:product-tanti} and \eqref{dgm:tensor-tanti}.

\medskip
 
We finally show symmetric equivariance. For $n\geq2$, let $\varphi\in E_2(n)$ and $\sigma\in \Sn$. The open embedding ${\mathbf j}^n$ commutes with the action of $\sigma$ on $Y^n$ and $(Y')^n$. Combining this with Lemma \ref{lem:tensoreq} and \eqref{eq:a-phi*-equivariance} 
gives the identifications 
\begin{align*}({\mathbf j^n})^*\FF^{\boxtimes_{\varphi^\sigma}n}&\simeq  
\sigma^*({\mathbf j^n})^*(\FF^{\boxtimes_{\varphi}n}),&&({\mathbf j}^n)^*\, (b^*_{\varphi^\sigma}\FF)\simeq ({\mathbf j}^n)^*(\sigma^* b^*_{\varphi}\FF)\simeq \sigma^*(\mathbf j^n)^* (b^*_{\varphi}\FF),\\
&{\rm and} &&\nu_{\varphi^\sigma}=({\mathbf j^n})^*\mu_{\varphi^\sigma}=({\mathbf j^n})^*\sigma^*(\mu_\varphi)=\sigma^* ({\mathbf j^n})^*\mu_\varphi=\sigma^*\nu_{\varphi}.
\end{align*}
   Hence, $\mathbf j^*(\FF,\nu)$ is a factorized perverse sheaf on $Y$. Good behavior with respect to morphisms follows, similarly, from compatibility of ${\mathbf j}^*$ with $\boxtimes_\varphi$ and $b^*_{\varphi}$, cf. \eqref{dgm:upperstar} and \eqref{dgm-tensor-upperstar}.
\end{proof}

The restriction  functors can also be defined on the (isomorphic) categories of vertical  factorized perverse sheaves, giving the following.

 \begin{cor}%\label{cor:factvb}
The truncation  functors $(\ )^{\leq d}$ and the restriction functors $j^*_{\leq d}$ fit into the following commutative diagram, where the vertical arrows are the forgetful isomorphisms from Proposition \ref{prop:factvb}.  
   \begin{equation*}
   \begin{tikzcd}
   \FP \arrow[d,"\cong"] \arrow[rr, "j^*_{\leq d}"]&&\FP_{\leq d} \arrow[d,"\cong"] \arrow[rr,"(\ )^{\leq d}"]&&\FP^{\leq d} \arrow[d,"\cong"]\\
   \FP^v  \arrow[rr, "j^*_{\leq d}"]&&\FP^v_{\leq d}  \arrow[rr,"(\ )^{\leq d}"]&&(\FP^{\leq d})^v 
   \end{tikzcd}
   \end{equation*}
        \end{cor}
\begin{proof}
Commutativity of the diagram follows from the definition of the restriction and truncation functors in Proposition \ref{prop:def-funtori-restrizione}, and of the forgetful functors from Proposition \ref{prop:factvb}.
\end{proof}

\section{An equivalence of categories}\label{sec:equivalence}

In this Section we show that a factorization datum on $\sym^{\leq d}(U)$ contains enough information to be extended to $\sym_{\leq d}(U)$. This will allow us to construct a quasi-inverse $(\ \ )_{\leq d}$ to the truncation functor \(
(\ \ )^{\leq d}: \mathcal{FP}_{\leq d} \rightarrow \mathcal{FP}^{\leq d}
\). This is one of the main results of the paper.

\medskip

We first need to fix some additional notation. 

For $\alpha=(\alpha_1,\,\ldots,\,\alpha_r)\in\NN^r$ and $\varphi\in E_2(r)$, we set $\sym^{\alpha}(U):=\prod_{j=1}^r\sym^{\alpha_j}(U)$ and denote by
 $V_{\varphi,\alpha}$ the image of the open embedding $a_{\varphi,\alpha}\colon \sym^{\alpha}(U)\to \sym^{|\alpha|}(U)$. We call such an open subset a {\it special open subset}.

We denote by $c_{\varphi,\alpha}\colon V_{\varphi,\alpha}\to \sym^\alpha(U)$ the unique morphism for which $c_{\varphi,\alpha}\circ a_{\varphi,\alpha}=\id_{\sym^\alpha(U)}$, so $a_{\varphi,\alpha}\circ c_{\varphi,\alpha}\colon V_{\varphi,\alpha}\to \sym^{|\alpha|}(U)$ is the natural inclusion.

\medskip

\begin{remark}\label{rem:refinement}
{\rm Let $\alpha\in\NN^r$, $\varphi\in E_2(r)$, $\beta\in \NN^{r'}$ and $\psi\in E_2(r')$. Then, 
\begin{enumerate}
\item $V_{\varphi,\alpha}=V_{\psi,\beta}$  if and only if $r=r'$ and  there is $\sigma\in\mathbb S_r$ such that $\psi=\varphi^\sigma$ and $\beta_l=\alpha_{\sigma^{-1}(l)}$ for all $l=1,\,\ldots,\,r$;
\item $V_{\psi,\beta}\subset V_{\varphi,\alpha}$ if and only if, up to a permutation action as above, $\beta$ is a refinement of $\alpha$, that is, there are some $n_l\in \NN$ for $l=1,\,\ldots,\,r$, such that $\beta$ is obtained by juxtaposition of $r$ sequences $\beta_l\in \NN^{n_l}$ satisfying $|\beta_l|=\alpha_l$, and $\psi=\varphi\circ (\xi_1,\,\ldots,\,\xi_r)$ for some  $\xi_l\in E_2(n_l)$.
\end{enumerate}}
\end{remark}

The collection $\{V_{\varphi,\alpha}, \;\alpha\in\NN^r, r\in \NN_{\geq 1}, \varphi\in E_2(r)\}$ is a basis for the topology of $\sym(U)$.

\medskip

Let $d\in \NN_{\geq 1}$. In the rest of the section we write $b_\varphi$ instead of $a_{\varphi,\leq d}$ to lighten notation. Observe that if $\alpha\in(\NN_{\leq d})^r$ and $\varphi\in E_2(r)$, then $V_{\varphi,\alpha}=b_{\varphi,\alpha}(\sym^\alpha(U))\subseteq\sym_{\leq d}(U)$. Hence, $b_{\varphi,\alpha}\circ c_{\varphi,\alpha}$ is the natural embedding of $V_{\varphi,\alpha}$ in $\sym^{|\alpha|}_{\leq d}(U)$.
The collection  
\begin{equation*}\mathcal{O}_d:=\{V_{\varphi,\alpha},\; \alpha\in(\NN_{\leq d})^r,\; r\in \NN_{\geq 1},\; \varphi\in E_2(r)\}\end{equation*} is a basis for the topology of $\sym_{\leq d}(U)$. Indeed, if $x\in\sym^n_{\leq d}(U)$, then $x\in \sym_\lambda(U)$ for some $\lambda=(\lambda_1,\ldots,\lambda_r)\in{\mathrm P}(n)$ with $d(\lambda)\leq d$. Then we can choose a suitable embedding $\varphi\in E_2(r)$ such that $x\in b_{\varphi,\lambda}(\prod_{l=1}^r\sym^{\lambda_l}_{\neq}(U))\subset V_{\varphi,\lambda}\cap \sym_{\lambda}(U)$. In fact, the special open subsets in $\mathcal O_d$ generate a sieve $\mathcal C_d$ on $\sym_{\leq d}(U)$. 

\medskip

 We first extend objects in $\FP^{\leq d}$ to perverse sheaves  on $\sym_{\leq d}(U)$ constructing functorially $\mathcal C_d$-locally defined objects in $D^b(\sym_{\leq d}(U),\Sigma,\Vu)$ in the sense of \cite[\S 3.2.3]{BBD}.

\begin{lem}\label{lem:locally-defined}Let $d\in\NN_{\geq1}$ and let $(\mathcal{F},\mu)=((\mathcal F_n)_{n\in\NN_{\leq d}}, \mu)\in \FP^{\leq d}$. For $r\in \NN_{\geq 1}$, $\alpha\in(\NN_{\leq d})^r$  and $\varphi\in E_2(r)$,
let $\widetilde{\mathcal{F}}_{\varphi, \alpha}$ be the perverse sheaf on $V_{\varphi,\alpha}$ defined by
\begin{align}\label{eq:ext}
\widetilde{\mathcal{F}}_{\varphi, \alpha} := c_{\varphi, \alpha}^\ast\left( \boxtimes_{\varphi}(\mathcal{F}_{\alpha_1}, \ldots, \mathcal{F}_{\alpha_r})\right).
\end{align}
Then, the collection $$\{\widetilde{\mathcal{F}}_{\varphi, \alpha},\;r\in \NN_{\geq 1},\;\alpha\in(\NN_{\leq d})^r,\;\varphi\in E_2(r)\},$$ 
fits into a $\mathcal C_d$-locally defined object in $D^b(\sym_{\leq d}(U),\Sigma,\Vu)$ determining a perverse sheaf $\widetilde{\mathcal F}$ on $\sym_{\leq d}(U)$ and a collection of compatible isomorphisms 
\begin{align}\label{eq:effe}f_{\varphi,\alpha}\colon \widetilde{\FF}|_{V_{\varphi,\alpha}}\longrightarrow \widetilde{\FF}_{\varphi,\alpha},&&V_{\varphi,\alpha}\in\mathcal O_d\end{align} such that the diagram below commutes

 \begin{equation}\label{eq:effe-compa}
 \begin{tikzcd}
\widetilde\FF|_{V_{\varphi\circ(\xi_1,\ldots,\xi_r),\underline\beta}}
\arrow[rr, "f_{\varphi\circ(\xi_1,\ldots,\xi_r),\underline\beta}"] 
\arrow[dr, swap, "{c_{\varphi\circ(\xi_1,\ldots,\xi_r),\underline\beta}^*b_{\varphi\circ(\xi_1,\ldots,\xi_r),\underline\beta}^*(f_{\varphi,\alpha}})"]
&& \widetilde\FF_{\varphi\circ(\xi_1,\ldots,\xi_r),\underline\beta}\\
&\widetilde\FF_{\varphi,\alpha}|_{V_{\varphi\circ(\xi_1,\ldots,\xi_r),\underline\beta}}
\arrow[ur, swap, "{c_{\varphi\circ(\xi_1,\ldots,\xi_r),\underline\beta}^*(\boxtimes_{\varphi}(\mu^{-1}_{\xi_1,\beta_1},\ldots,\mu^{-1}_{\xi_r,\beta_r}))}"]
\end{tikzcd} 
        \end{equation}
\begin{comment}
\begin{align}
 f_{\varphi\circ(\xi_1,\,\ldots,\,\xi_r),\underline\beta}=c_{\varphi\circ(\xi_1,\,\ldots,\,\xi_r),\underline\beta}^*\left(\boxtimes_{\varphi}(\mu^{-1}_{\xi_l,\beta_l})_{l=1}^r\circ b_{\varphi\circ(\xi_1,\,\ldots,\,\xi_r),\underline\beta}^*(f_{\varphi,\alpha})\right)  
\end{align}
\end{comment}
for any refinement $\underline\beta=(\beta_1,\,\ldots,\,\beta_r)$ of $\alpha$, with $\beta_l\in(\NN_{\leq d})^{n_l}$, $n_l\in\NN$, $|\beta_l|=\alpha_l$ and $\xi_l\in E_2(n_l)$ for $l=1,\,\ldots,\,r$.
\end{lem}
\begin{proof}
First of all we show that if $V_{\varphi,\alpha}=V_{\varphi',\alpha'}$ then 
$\widetilde{\mathcal{F}}_{\varphi, \alpha}=\widetilde{\mathcal{F}}_{\varphi', \alpha'}$. By Remark \ref{rem:refinement} (1) there is $\sigma\in \mathbb S_r$ such that $\alpha'=\,^\sigma\alpha=(\alpha_{\sigma^{-1}(1)},\ldots,\alpha_{\sigma^{-1}(r)})$ and $\varphi'=\varphi^\sigma$.  
Applying \eqref{eq:equivariance-a-phi} and \eqref{eq:boxtimes-sigma} yields the following chain of equalities of sheaves on $V_{\varphi,\alpha}$:
\begin{align*}
    \widetilde{\mathcal{F}}_{\varphi^\sigma, ^\sigma\!\alpha} &=c^*_{\varphi^\sigma, ^\sigma\!\alpha}\left(\boxtimes_{\varphi^\sigma}(\mathcal{F}_{\alpha_{\sigma^{-1}(1)}}, \ldots, \mathcal{F}_{\alpha_{\sigma^{-1}(r)}})\right)\\
    &= c_{\varphi,\alpha}^* (\sigma^{-1})^*\left(\sigma^*\left(\boxtimes_{\varphi}\sigma(\mathcal{F}_{\alpha_{\sigma^{-1}(1)}}, \ldots, \mathcal{F}_{\alpha_{\sigma^{-1}(r)}})\right)\right)= c_{\varphi,\alpha}^*\left(\boxtimes_{\varphi}(\mathcal{F}_{\alpha_1}, \ldots, \mathcal{F}_{\alpha_r})\right) = \widetilde{\mathcal{F}}_{\varphi, \alpha}.
    \end{align*}
    We now show that for any inclusion $V_{\psi,\beta}\subset V_{\varphi,\alpha}$
with $\beta$ and $\psi$  as in Remark \ref{rem:refinement} (2), 
the factorization datum $\mu$ induces isomorphisms $ \widetilde  {\mathcal F}_{\varphi,\alpha}|_{V_{\psi,\beta}}\simeq \widetilde{\mathcal F}_{\psi,\beta}$.
Applying Lemma \ref{lem:b-passa} we obtain the sequence of equalities
\begin{align*}
\widetilde{\mathcal F}_{\varphi,\alpha}|_{V_{\psi,\beta}}&=(c_{\psi,\beta}\circ b_{\psi,\beta})^* \widetilde{\mathcal F}_{\varphi,\alpha}=c_{\psi,\beta}^*(\prod_{l=1}^rb_{\xi_l,\beta_l})^*b_{\varphi,\alpha}^*c_{\varphi,\alpha}^*\left(\boxtimes_\varphi(\mathcal{F}_{\alpha_1}, \ldots, \mathcal{F}_{\alpha_r})\right)\\
 &=c_{\psi,\beta}^*(\prod_{l=1}^rb_{\xi_l,\beta_l})^*\left(\boxtimes_\varphi(\mathcal{F}_{\alpha_1}, \ldots, \mathcal{F}_{\alpha_r})\right)=c_{\psi,\beta}^*\left(\boxtimes_\varphi(b_{\xi_1,\beta_1}^*\mathcal F_{\alpha_1},\,\ldots,b_{\xi_r,\beta_r}^*\mathcal F_{\alpha_r})\right).
 \end{align*}
We observe that if $d(\alpha)\leq d$, then $|\beta_l|\leq d$ for any $l=1,\ldots,r$ so  
$c_{\psi,\beta}^*(\boxtimes_{\varphi}(\mu_{\xi_l,\beta_l}^{-1})_{l=1}^r)$ is a well-defined isomorphism, giving 
\begin{align*}
 \widetilde{\mathcal F}_{\varphi,\alpha}|_{V_{\psi,\beta}}&\stackrel{\sim}{\longrightarrow} c_{\psi,\beta}^*\left(\boxtimes_\varphi(\boxtimes_{\xi_1}(\mathcal F_{\beta_1},\,\ldots,\mathcal F_{\beta_{j_1}}),\ldots,\boxtimes_{\xi_r}(\mathcal F_{\beta_{r'}-j_r+1},\ldots,\,\mathcal F_{\beta_{r'}}))\right)=\widetilde{\mathcal F}_{\psi,\beta}\end{align*}
where the last equality follows from compatibility of $\boxtimes$ with operadic composition. Compatibility of these isomorphisms with compositions of inclusions (that is, the cocycle condition) follows from \eqref{eq:composition-a} and \eqref{eq:factdat_a_partition}. By \cite[Theorem~3.2.4, Corollaire~2.1.23]{BBD}, we have a uniquely determined perverse sheaf $\widetilde{\mathcal{F}}$ on $\sym_{\leq d}(U)$, with a family of identifications $f_{\varphi,\alpha}\colon \widetilde\FF|_{V_{\varphi,\alpha}}\to \widetilde\FF_{\varphi,\alpha}$. Equation \eqref{eq:effe-compa} spells out the 
compatibility of $f_{\varphi,\alpha}$ with restrictions.  
\end{proof}

Let $\FF$ and $\widetilde\FF$ be as in Lemma \ref{lem:locally-defined}.
%If $\varphi\in E_2(1)$ and $\alpha=(m)$ for some $m\in \NN_{\leq d}$, then $f_{\varphi,\alpha}$ as in \eqref{eq:effe} gives an isomorphism $f_{\varphi,(m)}\colon\widetilde{\FF}|_{V_{\varphi,(m)}}\to \widetilde{\FF}_{\varphi,(m)}=\FF_{(m)}|_{V_{\varphi,\alpha_1}}$. In particular, 
If $\varphi=\id_U$ and $\alpha=(m)$ for some $m\in\NN_{\leq d}$ then $V_{\varphi,\alpha}=\sym^{m}(U)$ and 
\eqref{eq:effe} gives a natural identification $f_{\id,(m)}\colon\widetilde{\FF}_{m}\to \widetilde{\FF}_{\id,(m)}=\FF_{m}$. 

\medskip

%In addition, the factorization data of $\FF$ can be recovered from the $f_{\varphi,\alpha}$ applying \eqref{eq:effe-compa} to the case $r=1$, $\alpha=(m)$, $\varphi=\id_U$, $\beta=\beta_1$, with $m\in \NN_{\leq d}$, $d(\beta)\leq d$  and $\xi=\xi_1$ NON E` DEFINITO se $|\beta|>d$
%\begin{equation}\label{eq:mu-from-effe}
%c^*_{\xi,\beta}(\mu_{\xi,\beta})=f_{\id,(m)}|_{V_{\xi,\beta}}\circ f_{\xi,\beta}^{-1}.
%\end{equation}
%
%\medskip

For any $r\in \NN$  and any $\varphi\in E_2(r)$ we have an isomorphism of perverse sheaves on $\sym^{\leq d}(U)^r$, defined on the connected component $\sym^\alpha(U)$ for 
 $\alpha\in(\NN_{\leq d})^r$ as follows
\begin{align}\label{eq:extension-mu-piccolo}
        \widetilde{\mu}_{[\varphi, \alpha]}\colon 
    \boxtimes_{\varphi}(\widetilde{\mathcal{F}}_{\alpha_1}, \ldots, \widetilde{\mathcal{F}}_{\alpha_r}) &\longrightarrow   \boxtimes_{\varphi}(\mathcal{F}_{\alpha_1}, \ldots, \mathcal{F}_{\alpha_r})= b_{\varphi,\alpha}^*(\widetilde{\FF}_{\varphi,\alpha})
   \longrightarrow b_{\varphi, \alpha}^\ast (\widetilde{\mathcal{F}}|_{V_{\varphi,\alpha}})=b_{\varphi, \alpha}^\ast \widetilde{\mathcal{F}}_{|\alpha|}\\
  \nonumber  \widetilde{\mu}_{[\varphi, \alpha]}&:=b_{\varphi,\alpha}^*(f_{\varphi,\alpha}^{-1})\circ\boxtimes_{\varphi}(f_{\id,(\alpha_1)},\ldots,f_{\id,(\alpha_r)}). 
       \end{align}

%\medskip
%
%The isomorphism $\mu_{[\varphi,(\alpha_1)]}$ satisfies \eqref{eq:mon-bra} for any $\varphi\in E_2(1)$ as we now show.
 
\begin{lem}\label{lem:mu-tilde-piccolo-mon-bra}
Let $\varphi\in E_2(1)$, and let $\gamma$ be a path in $E_2(1)$ from $\varphi$ to $\id_U$.  Let $m\leq d$ and let $\widetilde{\mu}_{[\varphi, (m)]}$ be in \eqref{eq:extension-mu-piccolo}. Then, the diagram below is commutative 
  \begin{equation}\label{eq:mon-bra-ext-piccolo}
            \begin{tikzcd}%[row sep=huge]
\boxtimes_{\varphi} \widetilde\FF 
\arrow[rr, "\widetilde\mu_{[\varphi,(m)]}"] 
\arrow[dr, swap, "\underline{R}_{\gamma,\widetilde\FF}"]& 
& b_\varphi^\ast \mathcal{F}
\arrow[dl, "M_{\gamma,\widetilde\FF}"]\\
&\widetilde\FF
\end{tikzcd} 
        \end{equation}
%Here $\underline{R}_{\gamma,\widetilde\FF}$ and $M_{\gamma,\widetilde\FF}$ are the components relative to $\widetilde\FF$ of $\underline{R}_\gamma$  and  $M_\gamma$, respectively.
\end{lem}
\begin{proof}
By definition of $\widetilde\mu_{[\varphi,(m)]}$ we need to prove commutativity of the external square of the diagram
 \begin{equation}
            \begin{tikzcd}%[row sep=huge]
\boxtimes_{\varphi} \widetilde\FF 
\arrow[rr, "\boxtimes_{\varphi}f_{\id,(m)}"] 
\arrow[d, swap, "\underline{R}_{\gamma,\widetilde\FF_m}"]& 
&\boxtimes_{\varphi}\FF_m=b_{\varphi,(m)}^*\widetilde\FF_{\varphi,(m)}\arrow[rr,"b_{\varphi,(m)}^*(f_{\varphi,(m)}^{-1})"] \arrow[d, swap, "\underline{R}_{\gamma,\FF_m}"]&&b_{\varphi,(m)}^*\widetilde\FF_m\arrow[d,"M_{\gamma,\widetilde\FF_m}"]\\
\widetilde\FF_m\arrow[rr,"f_{\id,(m)}"]&&\FF_m\arrow[rr,"f^{-1}_{\id,(m)}"]&&\widetilde\FF_m
\end{tikzcd} 
        \end{equation}
The left square commutes by naturality of $\underline R$.  The right square fits into the diagram
\begin{equation}
            \begin{tikzcd}%[row sep=huge]
\boxtimes_{\varphi}\FF_m=b_{\varphi,(m)}^*\widetilde\FF_{\varphi,(m)}\arrow[rrrr,"b_{\varphi,(m)}^*(f_{\varphi,(m)}^{-1})"] \arrow[dd, swap, "\underline{R}_{\gamma,\FF_m}"]\arrow[drr,"\mu_{\varphi,(m)}"]&&&&b_{\varphi,(m)}^*\widetilde\FF_m\arrow[dd,"M_{\gamma,\widetilde\FF_m}"]\\
&&b_{\varphi,(m)}^*\FF_m\arrow[urr,"b_{\varphi,(m)}^*f_{\id,(m)}^{-1}"]\arrow[dll,"M_{\gamma,\FF_m}"]\\
\FF_m\arrow[rrrr,swap,"f^{-1}_{\id,(m)}"]&&&&\widetilde\FF_m
\end{tikzcd} 
        \end{equation}
   The left triangle commutes in virtue of \eqref{eq:mon-bra} for $\mu$, the top  triangle commutes because of \eqref{eq:effe-compa}, and the bottom right diagram commutes by naturality of $M$.  
\end{proof}

    We  aim at extending the isomorphisms \eqref{eq:extension-mu-piccolo} from $\sym^{\leq d}(U)^r$ to $\sym_{\leq d}(U)^r$ for $r\in\NN$. We first define isomorphisms on products of special open subsets $V_{\underline{\psi},\underline\beta}:=V_{\psi_1,\beta_1}\times \cdots\times V_{\psi_r,\beta_r}$ where $V_{\psi_l,\beta_l}\in \mathcal O_d$ for any $l=1,\ldots, r$. 
    %$\psi_l\in E_2(m_l)$ with $m_l\in \NN_{\geq 1}$ and $\beta_{l}\in(\NN_{\leq d})^{m_l}$, for $l=1,\,\ldots,\,r$. 

\medskip
     
     Let $\alpha=(\alpha_1,\,\ldots,\,\alpha_r)\in\NN^r$ and $\varphi\in E_2(r)$, and let $\underline\beta:=(\beta_1,\,\ldots,\,\beta_r)$ be a refinement of $\alpha$, with $\beta_l\in(\NN_{\leq d})^{m_l}$  and $\alpha_l:=|\beta_l|$ for $l=1,\ldots,r$. We define an isomorphism of perverse sheaves on $V_{\underline\psi,\underline\beta}$ by setting
          \begin{align}
      \nonumber  (\widetilde{\mu}_{[\varphi, \alpha]})_{V_{\underline\psi,\underline\beta}}&\colon 
    \left(\boxtimes_{\varphi}(\widetilde{\mathcal{F}}_{\alpha_1}, \ldots, \widetilde{\mathcal{F}}_{\alpha_r})\right)|_{V_{\underline\psi,\underline\beta}} \longrightarrow  \left(b^*_{\varphi,\alpha}(\widetilde\FF_{|\alpha|})\right)|_{V_{\underline\psi,\underline\beta}}\\
   \nonumber (\widetilde{\mu}_{[\varphi, \alpha]})_{V_{\underline\psi,\underline\beta}}&:=(\prod_{l=1}^r c_{\psi_l,\beta_l})^*\left(\widetilde{\mu}_{[\varphi\circ(\psi_1,\,\ldots,\,\psi_r),\underline\beta]}\circ (\boxtimes_{\varphi\circ(\psi_1,\,\ldots,\,\psi_r)}((f^{-1}_{\id,\beta_{lp}})_{l=1}^r)_{p=1}^{m_l})\right)\circ\left( \boxtimes_{\varphi}(f_{\psi_l,\beta_l})_{l=1}^r\right)\\
   \label{eq:extension-mu-special-open} &=(\prod_{l=1}^r c_{\psi_l,\beta_l})^*\left(b_{\varphi\circ(\psi_1,\,\ldots,\,\psi_r),\underline\beta}^*(f_{\varphi\circ(\psi_1,\,\ldots,\,\psi_r),\beta}^{-1})\right)\circ\left( \boxtimes_{\varphi}(f_{\psi_l,\beta_l})_{l=1}^r\right).
    \end{align}

    We point out that if $\psi_l=\id_U$ for $l=1,\ldots,\,r$ and $\alpha=\beta$ then we recover \eqref{eq:extension-mu-piccolo}, so there is no ambiguity in the notation.  
In addition, there holds
\begin{align}\label{eq:useful}
\nonumber  (\prod_{l=1}^r b_{\psi_l\beta_l})^*  (\widetilde{\mu}_{[\varphi, \alpha]})_{V_{\underline\psi,\underline\beta}}&=\widetilde{\mu}_{[\varphi\circ(\psi_1,\,\ldots,\,\psi_r),\underline\beta]}\circ\left(\boxtimes_{\varphi}\left(\boxtimes_{\psi_l}(f^{-1}_{\id,\beta_{lp}})_{p=1}^{m_l}\circ b_{\psi_l\beta_l}^*(f_{\psi_l,\beta_l})\right)_{l=1}^r\right)\\
  &=\widetilde{\mu}_{[\varphi\circ(\psi_1,\,\ldots,\,\psi_r),\underline\beta]}\circ \boxtimes_{\varphi}(\widetilde\mu^{-1}_{[\psi_l,(\beta_l)]})_{l=1}^r
\end{align}

\begin{lem}\label{lem:ext-mu-morphism}Let $d\in\NN_{\geq 1}$. Let $(\mathcal{F},\mu)=((\mathcal F_n)_{n\in\NN_{\leq d}}, \mu)\in \FP^{\leq d}$ and let $\widetilde{\mathcal F}$ and $f_{\varphi,\alpha}$ for $\varphi\in E_2(r)$, $r\geq 1$, $\alpha\in (\NN_{\leq d})^r$ be respectively the perverse sheaf on $\sym_{\leq d}(U)$ and  the collection of isomorphisms as in Lemma \ref{lem:locally-defined}.
Then, there is a uniquely determined family of  isomorphisms of perverse sheaves on $\sym_{\leq d}(U)^r$ 
\begin{align*}\widetilde{\mu}_{\varphi,\alpha}\colon \widetilde{\FF}^{\boxtimes_{\varphi} r}\to b_{\varphi,\alpha}^*(\widetilde{\FF}),&& r\in\NN_{\geq1},\;\varphi\in E_2(r),\;\alpha\in\NN^r\end{align*} 
satisfying $\widetilde{\mu}_{\varphi,\alpha}|_{V_{\underline\psi,\underline\beta}}=(\widetilde{\mu}_{[\varphi,\alpha]})_{V_{\underline\psi,\underline\beta}}$ for any $r\in\NN$, $\varphi\in E_2(r)$, $\alpha\in \NN^r$,  $\psi_l\in E_{2}(m_l)$, 
$\beta_l\in(\NN_{\leq d})^{m_l}$, $m_l\in\NN$ and $l=1,\ldots,r$.
\end{lem}
\begin{proof}
First of all, we show that $(\widetilde{\mu}_{\varphi, \alpha})_{V_{\underline\psi,\underline\beta}}$  depends on 
 $V_{\underline\psi,\underline\beta}$ only. Let $\sigma_l\in \mathbb S_{m_l}$ for $l=1,\,\ldots,r$. Then
 $V_{\underline\psi,\underline\beta}=\prod_{l=1}^rV_{\psi_l,\beta_l}$ and 
 $V_{\underline\psi^{\sigma},\,^{\sigma}\!\underline\beta}:=\prod_{l=1}^rV_{\psi_l^{\sigma_l},\,^{\sigma_l}\!\beta_l}$ coincide and 
\begin{align*}(\widetilde{\mu}_{[\varphi, \alpha]})_{V_{\underline\psi^{\sigma},\,^{\sigma}\!\underline\beta}}&=(\prod_{l=1}^r c_{\psi_l^{\sigma_l},\,^{\sigma_l}\!\beta_l})^*\left(b_{\varphi\circ(\psi_1^{\sigma_1},\,\ldots,\,\psi_r^{\sigma_r}),^{\sigma}\beta}^*(f_{\varphi\circ(\psi_1^{\sigma_1},\,\ldots,\,\psi_r^{\sigma_r}),\,^{\sigma}\beta}^{-1})\right)\circ\left( \boxtimes_{\varphi}(f_{\psi^{\sigma_l}_l,^{\sigma_l}\!\beta_l})_{l=1}^r\right).
\end{align*} 
Lemma \ref{lem:locally-defined} gives 
\begin{align*}&\widetilde\FF_{\psi^{\sigma_l},^{\sigma_l}\!\beta_l}=\widetilde\FF_{\psi_l,\beta_l},&&f_{\psi_l^{\sigma_l},^{\sigma_l}\!\beta_l}=f_{\psi_l,\beta_l},\\
&f_{\id,^{\sigma_l}\!\beta_l}=f_{\id,\beta_l},&&f_{\varphi\circ(\psi_1^{\sigma_1},\ldots,\psi_r^{\sigma_r}),(^{\sigma_1}\!\beta_1,\ldots, ^{\sigma_r}\!\beta_r)}=f_{\varphi\circ(\psi_1,\ldots,\psi_r),(\beta_1,\ldots, \beta_r)}.
\end{align*}
Invoking Lemma \ref{lem:tensoreq}, \eqref{eq:a-phi*-equivariance}, \eqref{eq:composition-a}, and \eqref{eq:extension-mu-special-open}  we get
\begin{align*}&(\widetilde{\mu}_{[\varphi, \alpha]})_{V_{\underline\psi^{\sigma},\,^{\sigma}\!\underline\beta}}=\\
&=(\prod_{l=1}^rc_{\psi_l,\beta_l}^*(\sigma_l^{-1})^*)\left((\prod_{l=1}^r\sigma_l^*b_{\psi_l,\beta_l}^*)b_{\varphi,\alpha}^*\left(f^{-1}_{\varphi\circ(\psi_1,\ldots,\psi_r),(\beta_1,\,\ldots,\,\beta_r)}\right)\right)\circ\boxtimes_{\varphi}(f_{\psi_l,\beta_l})_{l=1}^r\\
&=(\prod_{l=1}^rc_{\psi_l,\beta_l}^*)\left((\prod_{l=1}^rb_{\psi_l,\beta_l}^*)b_{\varphi,\alpha}^*\left(f^{-1}_{\varphi\circ(\psi_1,\ldots,\psi_r),(\beta_1,\,\ldots,\,\beta_r)}\right)\right)\circ\boxtimes_{\varphi}(f_{\psi_l,\beta_l})_{l=1}^r=(\widetilde{\mu}_{[\varphi, \alpha]})_{V_{\underline\psi,\,\underline\beta}}.
\end{align*}

We now show that the data are compatible with restriction to products of special open subsets, that is, we show that for  $V_{\psi'_l,\beta_l'}\subseteq V_{\psi_l,\beta_l}$ for $l=1,\,\ldots, r$ there holds
\begin{equation}\label{eq:glue-mu}(\widetilde{\mu}_{[\varphi, \alpha]})_{V_{\underline\psi,\underline\beta}}|_{V_{\underline\psi',\underline\beta'}}=(\widetilde{\mu}_{[\varphi, \alpha]})_{V_{\underline\psi',\underline\beta'}}.\end{equation}

It is enough to prove the statement in the case in which  $\psi'_l=\psi_l$ and $\beta_l'=\beta_l$  for all but one index $l$. Without loss of generality we  assume that $\psi'_l=\psi_l$ and $\beta_l'=\beta_l$ for $l=1,\ldots, r-1$ and that $\beta'_r=(\delta_1,\,\ldots,\,\delta_{m_r})$ is a refinement of $\beta_r$, with $\delta_p\in(\NN_{\leq d})^{n_p}$ for $p=1,\ldots,m_r$ and that $\psi_r'=\psi_r\circ(\xi_1,\,\ldots,\,\xi_{m_r})$ with $\xi_p\in E_2(n_p)$. Now, using \eqref{eq:extension-mu-special-open} we have
\begin{align*}
  (\widetilde{\mu}_{[\varphi, \alpha]})_{V_{\underline\psi,\underline\beta}}|_{V_{\underline\psi',\underline\beta'}}=(\prod_{l=1}^rc_{\psi'_l,\beta_l'})^*b_{\varphi\circ(\psi_1',\,\ldots,\psi'_r),\beta'}^*(f^{-1}_{\varphi\circ (\psi_1,\,\ldots,\psi_r),\beta})\circ \left(\boxtimes_{\varphi}(c_{\psi'_l,\beta_l'}^*b_{\psi'_l,\beta_l'}^*f_{\psi_l,\beta_l})_{l=1}^r\right)\end{align*}
 and, applying \eqref{eq:effe-compa}
 \begin{align*}(\widetilde{\mu}_{[\varphi, \alpha]})_{V_{\underline\psi',\underline\beta'}}
   &=(\prod_{l=1}^rc_{\psi'_l,\beta_l'})^*b_{\varphi\circ(\psi_1',\,\ldots,\psi'_r),\beta'}^*(f^{-1}_{(\varphi\circ (\psi'_1,\,\ldots,\psi'_r))\circ(\id^{r-1},\xi_1,\,\ldots,\xi_{m_r}),\beta'})\circ\boxtimes_{\varphi}( f_{\psi'_l,\beta'_l})_{l=1}^r\\
   &=(\prod_{l=1}^rc_{\psi'_l,\beta_l'})^*b_{\varphi\circ(\psi_1',\,\ldots,\psi'_r),\beta'}^*(f^{-1}_{\varphi\circ (\psi_1,\,\ldots,\psi_r),\beta})\\
   &\circ (\prod_{l=1}^rc_{\psi'_l,\beta_l'})^*\left(\boxtimes_{\varphi\circ(\psi_1,\,\ldots,\,\psi_r)}(\id^{r-1},\mu_{\xi_1,\delta_1}\,\ldots,\mu_{\xi_{m_r},\delta_{m_r}})\right)\circ \boxtimes_{\varphi}( f_{\psi'_l,\beta'_l})_{l=1}^r\\
   &=(\prod_{l=1}^rc_{\psi'_l,\beta_l'})^*b_{\varphi\circ(\psi_1',\,\ldots,\psi'_r),\beta'}^*(f^{-1}_{\varphi\circ (\psi_1,\,\ldots,\psi_r),\beta})\\
   &\circ \boxtimes_{\varphi}
   (c_{\psi_p,\beta_p}^*(\id^{\boxtimes_{\psi_p}m_p})\circ f_{\psi_p,\beta_p})_{p=1}^{r-1},c_{\psi'_r,\beta'_r}^*(\boxtimes_{\psi_r'}(\mu_{\xi_l,\delta_l})_{l=1}^{m_r})f_{\psi_r\circ(\xi_1,\,\ldots,\,\xi_{m_r}),\beta_r'}).
   \end{align*}
Therefore \eqref{eq:glue-mu}  holds if and only if 
\begin{equation}\label{eq:requirement}
c_{\psi'_r,\beta'_r}^*b_{\psi'_r,\beta'_r}^*f_{\psi_r,\beta_r}=c_{\psi'_r,\beta'_r}^*(\boxtimes_{\psi_r'}(\mu_{\xi_l,\delta_l})_{l=1}^{m_r})f_{\psi_r\circ(\xi_1,\,\ldots,\,\xi_{m_r}),\beta_r'}.
\end{equation}
which is immediate from \eqref{eq:effe-compa}.
\end{proof}

 \begin{lem}\label{lem:ext-mu-FD}Let $d\in\NN_{\geq 1}$. Let $(\mathcal{F},\mu)=((\mathcal F_n)_{n\in\NN_{\leq d}}, \mu)\in \FP^{\leq d}$ and let 
 \begin{equation*}\widetilde{\mathcal F},\quad
(f_{\varphi,\alpha})_{\varphi\in E_2(r),\,r\in \NN_{\geq1},\, \alpha\in (\NN_{\leq d})^r}\end{equation*} be respectively the perverse sheaf on $\sym_{\leq d}(U)$ and  the collection of isomorphisms as in Lemma \ref{lem:locally-defined}.
Then, the collection of isomorphisms 
\begin{equation*}
(\widetilde{\mu}_{\varphi,\alpha})_{\varphi\in E_2(r),\,r\in\NN,\alpha\in \NN^r}\end{equation*} from Lemma \ref{lem:ext-mu-morphism}  is a factorization datum for $\widetilde{\mathcal F}$, that is, $(\widetilde\FF,\widetilde\mu)$ is an object in $\FP_{\leq d}$.
\end{lem}
\begin{proof}
We first prove \eqref{eq:factdat_a_partition} on products of special open subsets. Let $r, m_l\in\NN$, $\alpha\in\NN^r$, $\varphi\in E_2(r)$, $\underline{\psi}=(\psi_1,\,\ldots,\,\psi_r)$ with $\psi_l\in E_2(m_l)$, $m=\sum_{l=1}^rm_l$, and $\beta_l=(\beta_{l1},\ldots,\beta_{lm_l})\in(\NN_{\leq d})^{m_l}$ with $|\beta_l|=\alpha_l$ for $l=1,\ldots,r$. Then
$\widetilde{\mu}_{\varphi\circ\underline{\psi},\underline{\beta}}$ is an isomorphism of sheaves on $\sym_{\leq d}^{\underline{\beta}}(U)=\prod_{l=1}^r\sym_{\leq d}^{\beta_{l}}(U)$. For $l=1,\,\ldots,\,r$, let $V_{\underline\xi_l,\underline\delta_l}\subset \sym^{\beta_l}(U)$ be a product of special open subsets in $\sym_{\leq d}^{\beta_{l}}(U)$. Here,  $\underline\xi_{l}=(\xi_{l1},\,\ldots,\,\xi_{lm_l})\in\prod_{p=1}^{m_l}E_2(n_{lp})$ for some $n_{lp}\in\NN$, and $\underline\delta_l=(\delta_{l1},\,\ldots,\,\delta_{lm_l})$ with $\underline\delta_{lp}\in(\NN_{\leq d})^{n_{lp}}$ and $|\underline\delta_{lp}|=\beta_{lp}\leq d$.  We set $\underline{\xi}:=(\underline\xi_1,\,\ldots,\,\underline\xi_r)$ and $\underline\delta:=(\underline\delta_1,\,\ldots,\,\underline\delta_r)$ and verify that
\begin{align*}
\widetilde{\mu}_{\varphi\circ\underline{\psi},\alpha}|_{V_{\underline{\xi},\underline\delta}}=
((\Pi_{l=1}^r b_{\psi_l,\beta_l})^\ast(\widetilde\mu_{\varphi,\alpha}))|_{V_{\underline{\xi},\underline\delta}}\circ
\left( \boxtimes_\varphi(\widetilde\mu_{\psi_l,\beta_l})_{l=1}^r\right)|_{V_{\underline{\xi},\underline\delta}}. 
\end{align*}

 Applying \eqref{eq:extension-mu-special-open} gives 
\begin{align*}
 \widetilde{\mu}_{\varphi\circ\underline{\psi},\alpha}|_{V_{\underline{\xi},\underline\delta}}&=
 (\prod_{l,p}c_{\xi_{lp},\delta_{lp}})^*(b_{(\varphi\circ\underline\psi)\circ\underline\xi,\underline\delta}^*(f_{(\varphi\circ\underline\psi)\circ\underline\xi,\underline\delta}^{-1}))\circ\boxtimes_{\varphi\circ\underline\psi}((f_{\xi_{lp,\delta_{lp}}})_{l=1}^r))_{p=1}^{m_l};
\end{align*}

\begin{align*}
((\prod_{l=1}^r b_{\psi_l,\beta_l})^\ast(\widetilde\mu_{\varphi,\alpha}))|_{V_{\underline{\xi},\underline\delta}}&=(\prod_{l,p}c_{\xi_{lp},\delta_{lp}})^*(\prod_{l,p}b_{\xi_{lp},\delta_{lp}})^*(\prod_{l=1}^r b_{\psi_l,\beta_l})^\ast(\widetilde\mu_{\varphi,\alpha}))\\
&=(\prod_{l=1}^r b_{\psi_l,\beta_l})^\ast(\widetilde\mu_{\varphi,\alpha}|_{V_{\psi_1\circ\underline\xi_1,\underline\delta_1}\times\ldots\times V_{\psi_r\circ\underline\xi_r,,\underline\delta_r}})\\
&=(\prod_{l=1}^r b_{\psi_l,\beta_l})^\ast\left(\left((\prod_{l=1}^rc_{\psi_l\circ\underline\xi_l,\delta_l})^*b_{(\varphi\circ\underline\psi)\circ\underline\xi,\underline\delta}^*f_{(\varphi\circ\underline\psi)\circ\underline\xi,\underline\delta}^{-1}\right)
\circ\boxtimes_{\varphi}(f_{\psi_l\circ\underline\xi_l,\underline\delta_l})_{l=1}^r\right)\\
&=(\prod_{l,p}c_{\xi_{lp},\delta_{lp}})^*(b_{(\varphi\circ\underline\psi)\circ\underline\xi,\underline\delta}^*(f_{(\varphi\circ\underline\psi)\circ\underline\xi,\underline\delta}^{-1}))\circ
(\prod_{l=1}^r b_{\psi_l,\beta_l})^\ast\left(\boxtimes_{\varphi}(f_{\psi_l\circ\underline\xi_l,\underline\delta_l})_{l=1}^r\right);
\end{align*}
and
\begin{align*}
\left( \boxtimes_\varphi(\widetilde\mu_{\psi_l,\beta_l})_{l=1}^r\right)|_{V_{\underline{\xi},\underline\delta}}&=(\prod_{l,p}c_{\xi_{lp},\delta_{lp}})^*(\prod_{l,p}b_{\xi_{lp},\delta_{lp}})^*\left( \boxtimes_\varphi(\widetilde\mu_{\psi_l,\beta_l})_{l=1}^r\right)\\
&=\boxtimes_{\varphi}\left((\prod_{p=1}^{m_l}c_{\xi_{lp},\delta_{lp}})^*(\prod_{p=1}^{m_l}b_{\xi_{lp},\delta_{lp}})^*(\widetilde\mu_{\psi_l,\beta_l})\right)_{l=1}^r\\
&=\boxtimes_{\varphi}\left(\widetilde\mu_{{\psi_l,\beta_l}}|_{V_{\underline\xi_l,\underline\delta_l}}\right)_{l=1}^r\\
&=\boxtimes_{\varphi}\left((\prod_{p=1}^{m_l}c_{\xi_{lp},\delta_{lp}})^*b_{\psi_l\circ\underline\xi_l,,\underline\delta_l}^*(f_{\psi_l\circ\underline\xi_l,\underline\delta_l}^{-1})\circ (\boxtimes_{\psi_l}(f_{\xi_{lp},\delta_{lp}})_{p=1}^{m_l})\right)_{l=1}^r
\end{align*}
and the desired equality follows from comparison of the terms. 

\medskip

We verify braiding-monodromy compatibility \eqref{eq:mon-bra} for $\widetilde{\mu}$. First of all, we consider the restriction of $\widetilde{\mu}$ to $\coprod_{n\in\NN} \sym^{\leq d}(U)^n$, where \eqref{eq:extension-mu-piccolo} is in force. Then \eqref{eq:mon-bra} holds by Lemma \ref{lem:mu-tilde-piccolo-mon-bra}. Observe that, since $\widetilde\mu$ is compatible with operadic composition, the proof of Proposition \ref{prop:mon-bra-basta1} carries through. Therefore, for every path $\gamma$ in $E_2(r)$ joining $\eta$ to $\eta'$, and any $\beta\in(\NN_{\leq d})^r$ the diagram below of  isomorphism of sheaves on $\sym^\beta(U)$ is commutative
        \begin{equation}\label{eq:piccolo-mon-bra-esteso}
            \begin{tikzcd}[row sep=huge]
 \widetilde\FF^{\boxtimes_{\eta} r} 
\arrow[rr, "\widetilde\mu_{\eta,\beta}"] 
\arrow[d, swap, "\underline{R}_{\gamma,\widetilde\FF^r}"]& 
& b_{\eta,\beta}^\ast \widetilde\FF
\arrow[d, "M_{\gamma,\widetilde\FF}"]\\
\widetilde\FF^{\boxtimes_{\eta'} r}
\arrow[rr, "\mu_{\eta',\beta}"]  & 
& b_{\eta',\beta}^\ast \widetilde\FF
\end{tikzcd} 
        \end{equation}
Let now $\varphi\in E_2(1)$. We verify \eqref{eq:mon-bra} on $V_{\psi,\beta}$ for 
$V_{\psi,\beta}\in\mathcal O_d$, and $|\beta|=m$. Applying \eqref{eq:useful} and $b_{\psi,\beta}^*$ this boils down to verifying commutativity of the diagram below, on $\sym^\beta(U)\subset\sym^{\leq d}(U)^r$.
\begin{equation}
            \begin{tikzcd}%[row sep=huge]
\boxtimes_{\varphi} b_{\psi,\beta}^*(\widetilde\FF_m) 
\arrow[d, swap, "\underline{R}_{\gamma,b_{\psi,\beta}^*(\widetilde\FF_m)}"]& 
&\arrow[ll, swap, "\boxtimes_{\varphi}\widetilde{\mu}_{\psi,\beta}"] \boxtimes_{\varphi}\boxtimes_\psi(\widetilde\FF_{\beta_1},\ldots,\widetilde\FF_{\beta_r})\arrow[rr,"\widetilde{\mu}_{\varphi\circ\psi,\beta}"]\arrow[d, swap, "\underline{R}_{\gamma,\boxtimes_\psi(\widetilde\FF_{\beta_1},\ldots,\widetilde\FF_{\beta_r})}"] 
&&b_{\varphi\circ\psi,\beta}^*\widetilde\FF_m
\arrow[d,"b_{\psi,\beta}^*(M_{\gamma,\widetilde\FF_m})"]
\\
b_{\psi,\beta}^*(\widetilde\FF_m)&&\boxtimes_\psi(\widetilde\FF_{\beta_1},\ldots,\widetilde\FF_{\beta_r})\arrow[ll,"\widetilde\mu_{\psi,\beta}"]\arrow[rr,"\widetilde\mu_{\psi,\beta}"]&&b_{\psi,\beta}^*(\widetilde\FF_m)
\end{tikzcd} 
        \end{equation}
The left hand square commutes by naturality of $\underline R$. The right hand square commutes in virtue of \eqref{eq:piccolo-mon-bra-esteso} because 
$\underline{R}_{\gamma,\boxtimes_\psi(\widetilde\FF_{\beta_1},\ldots,\widetilde\FF_{\beta_r})}=\underline{R}_{\gamma\circ\psi,(\widetilde\FF_{\beta_1},\ldots,\widetilde\FF_{\beta_r})}$ and $b_{\psi,\beta}^*(M_{\gamma,\widetilde\FF_m})=M_{\gamma\circ\psi, \widetilde\FF_m}$.

\medskip

Finally, we prove symmetric equivariance. By Lemma \ref{lem:locally-defined}, for any $\sigma\in\mathbb S_r$, any $\alpha\in\NN^r$ and any $\varphi\in E_2(r)$  we have
$\widetilde\FF_{\varphi^\sigma,^{\sigma}\!\alpha}=\widetilde\FF_{\varphi,\alpha}$, and $f_{\varphi^{\sigma},^{\sigma}\!\alpha}=f_{\varphi,\alpha}$, therefore \eqref{eq:equivariance} 
follows from \eqref{eq:extension-mu-special-open}, \eqref{eq:a-phi*-equivariance} and \eqref{eq:boxtimes-sigma}.
\end{proof}

\begin{lem}\label{lem:ext-mor}
Let $h=(h_n)_{n\leq d}\colon (\FF,\mu)\to (\GG,\nu)$ be a morphism in $\FP^{\leq d}$, let $(\widetilde\FF,\widetilde\mu)$ and $(\widetilde\GG,\widetilde\nu)$ be obtained from $(\FF,\mu)$ and $(\GG,\nu)$  as in Lemma \ref{lem:locally-defined} and \eqref{eq:extension-mu-special-open}. 
Let 
\begin{align*}f_{\varphi,\alpha}\colon \widetilde\FF|_{V_{\varphi,\alpha}}\to \widetilde\FF_{\varphi,\alpha},&&
g_{\varphi,\alpha}\colon \widetilde\GG|_{V_{\varphi,\alpha}}\to \widetilde\GG_{\varphi,\alpha}, &&\mbox{ for }V_{\varphi,\alpha}\in\mathcal O_d
\end{align*}
be the corresponding natural identifications. Then, the collection of morphisms
\begin{align*}
\widetilde{h}_{\varphi,\alpha}:=g_{\varphi,\alpha}^{-1}\circ c_{\varphi,\alpha}^*\left(\boxtimes_{\varphi}(h_{\alpha_1},\,\ldots,\,h_{\alpha_r})\right)\circ f_{\varphi,\alpha}\colon \widetilde\FF|_{V_{\varphi,\alpha}}\to \widetilde\GG|_{V_{\varphi,\alpha}}
\end{align*}
for $V_{\varphi,\alpha}\in \mathcal O_d$ is well-defined and can be glued to a morphism 
$\widetilde{h}\colon (\widetilde\FF,\widetilde\mu)\to(\widetilde\GG,\widetilde\nu)$ in $\FP_{\leq d}$.
\end{lem}
\begin{proof}
First of all we show that $\widetilde{h}$ is a well-defined morphism of perverse sheaves on any $V_{\varphi,\alpha}\in\mathcal O_d$. Invoking \eqref{eq:boxtimes-sigma},  \eqref{eq:a-phi*-equivariance} and equivariance of $f_{\varphi,\alpha}$ and $g_{\varphi,\alpha}$ we have
\begin{align*}
\widetilde h_{\varphi^\sigma,\,^{\sigma}\!\alpha}&=g^{-1}_{\varphi^\sigma,\,^{\sigma}\!\alpha}\circ c_{\varphi^\sigma,\,^{\sigma}\!\alpha}^*\left(\boxtimes_{\varphi^\sigma}(h_{\alpha_{\sigma^{-1}(1)}},\,\ldots,h_{\alpha_{\sigma^{-1}(r)}})\right)\circ f_{\varphi^\sigma,\,^{\sigma}\!\alpha}\\
&=g^{-1}_{\varphi,\alpha}\circ c_{\varphi,\alpha}^*(\sigma^{-1})^*\sigma^*\left(\boxtimes_{\varphi}(h_{\alpha_1},\,\ldots,h_{\alpha_r})\right)\circ f_{\varphi,\alpha}=\widetilde h_{\varphi,\,\alpha}.
\end{align*}
In order to show that these locally defined data can be glued to give a morphism of perverse sheaves $\widetilde{h}\colon \widetilde{\FF}\to \widetilde{\GG}$, it suffices to show that for $V_{\psi,\beta}\subset V_{\varphi,\alpha}$ as in Remark \ref{rem:refinement} (2) there holds
$\widetilde{h}_{\varphi,\alpha}|_{V_{\psi,\beta}}=\widetilde{h}_{\psi,\beta}$.
It follows from  \eqref{eq:effe-compa} that
\begin{align*}c_{\psi,\beta}^*b_{\psi,\beta}^*(f_{\varphi,\alpha})=c_{\psi,\beta}^*\left(\boxtimes_{\varphi}(\mu_{\xi_l,\beta_l})_{l=1}^r\right)\circ f_{\psi,\beta},&&
c_{\psi,\beta}^*b_{\psi,\beta}^*(g^{-1}_{\varphi,\alpha})=g_{\psi,\beta}^{-1}c_{\psi,\beta}^*\left(\boxtimes_{\varphi}(\nu^{-1}_{\xi_l,\beta_l})_{l=1}^r\right)
\end{align*}
and so
\begin{align*}
&\widetilde{h}_{\varphi,\alpha}|_{V_{\psi,\beta}}=c_{\psi,\beta}^*b_{\psi,\beta}^*\widetilde{h}_{\varphi,\alpha}\\
&=g_{\psi,\beta}^{-1}c_{\psi,\beta}^*\left(\boxtimes_{\varphi}(\nu^{-1}_{\xi_l,\beta_l})_{l=1}^r\right)\circ c_{\psi,\beta}^*(\prod_{l=1}^rb_{\xi_l,\gamma_l})^*\left(\boxtimes_{\varphi}(h_{\alpha_1},\,\ldots,\,h_{\alpha_r})\right) \circ c_{\psi,\beta}^*\left(\boxtimes_{\varphi}(\mu_{\xi_l,\beta_l})_{l=1}^r\right)\circ f_{\psi,\beta}\\
&=g_{\psi,\beta}^{-1}c_{\psi,\beta}^*\left(\boxtimes_{\varphi}(\nu^{-1}_{\xi_l,\beta_l}\circ b_{\xi_l,\beta_l}^*(h_{\alpha_l})\circ \mu_{\xi_l,\beta_l})_{l=1}^r\right)\circ f_{\psi,\beta}=g_{\psi,\beta}^{-1}c_{\psi,\beta}^*\left(\boxtimes_{\varphi}(h_{\alpha_1},\,\ldots,
,h_{\alpha_r})\right)\circ f_{\psi,\beta}\\
&=\widetilde{h}_{\psi,\beta}
\end{align*}
where the second-to-last equality follows because $h\colon \FF\to\GG$ is a morphism in $\FP^{\leq d}$.

We now show that the morphism of perverse sheaves $\widetilde{h}$ so defined is a morphism in $\FP_{\leq d}$. Commutativity of the diagram \eqref{eq:def-morphisms} is equivalent to commutativity of 
 \begin{equation}\label{eq:step1}
  \begin{tikzcd}
\boxtimes_{\varphi}(\widetilde{\FF}|_{V_{\psi_1,\beta_1}},\ldots,\, \widetilde{\FF}|_{V_{\psi_r,\beta_r}})
\arrow[rr, "(\widetilde\mu_{\varphi,\alpha})|_{V_{\underline\psi,\underline\beta}}"] 
\arrow[d, swap, "\boxtimes_{\varphi}(\widetilde h_{\alpha_l}|_{V_{\psi_l,\beta_l}}))_{l=1}^r"]& 
&  (b_{\varphi,\alpha}^*\widetilde\FF_{|\alpha|})|_{V_{\underline\psi,\underline\beta}}
\arrow[d, "(b_{\varphi,\alpha}^*\widetilde{h}_{|\alpha|})|_{V_{\varphi,\alpha}} "]\\
\boxtimes_{\varphi}(\widetilde{\GG}|_{V_{\psi_1,\beta_1}},\ldots,\, \widetilde{\GG}|_{V_{\psi_r,\beta_r}})
\arrow[rr, "(\widetilde\nu_{\varphi,\alpha})|_{V_{\underline\psi,\underline\beta}}"]  & 
&  (b_{\varphi,\alpha}^*\widetilde\GG_{|\alpha|})|_{V_{\underline\psi,\underline\beta}}
\end{tikzcd} 
        \end{equation}
        for any $r, m_l\in\NN$, any $\varphi\in E_2(r)$ any $\underline\psi=(\psi_1,\,\ldots,\,\psi_r)\in \prod_{l=1}^r E_2(m_l)$ any $\alpha\in\NN^r$ and any $\underline\beta=(\beta_1,\,\ldots,\,\beta_r)\in(\NN_{\leq d})^{\sum_lm_l}$ with $|\beta_l|=\alpha_l$ for $l=1,\,\ldots,r $. 
        
    Through the identification $(b_{\varphi,\alpha}^*(\widetilde\FF_{|\alpha|}))|_{V_{\underline\psi,\underline\beta}}=b_{\varphi\circ\underline\psi,\underline\beta}^*(\widetilde\FF_{|\alpha|}|_{V_{\varphi\circ\underline\psi,\underline\beta}})$ the right vertical arrow becomes
\begin{align*}
(b_{\varphi,\alpha}^*\widetilde{h}_{|\alpha|})|_{V_{\underline\psi,\underline\beta}}&=b_{\varphi\circ\underline\psi,\underline\beta}^*(\widetilde{h}_{|\alpha|}|_{V_{\varphi\circ\underline\psi,\underline\beta}})\\
&=(\prod_{l=1}^rc_{\psi_l,\beta_l})^*b_{\varphi\circ\underline\psi,\underline\beta}^*\left(g^{-1}_{\varphi\circ\underline\psi,\underline\beta}\circ(c_{\varphi\circ\underline\psi,\underline\beta})^*(\boxtimes_{\varphi\circ\underline\psi}((h_{lp})_{l=1}^r)_{p=1}^{m_l}))\circ f_{\varphi\circ\underline\psi,\underline\beta}\right)
\end{align*}
so applying \eqref{eq:extension-mu-special-open} gives
\begin{align*}
 &\left(b_{\varphi,\alpha}^*\widetilde{h}_{|\alpha|}\circ \widetilde\mu_{\varphi,\alpha}\right)|_{V_{\underline\psi,\underline\beta}}\\
 &=(\prod_{l=1}^rc_{\psi_l,\beta_l})^*b_{\varphi\circ\underline\psi,\underline\gamma}^*\left(g^{-1}_{\varphi\circ\underline\psi,\underline\beta}\circ(c_{\varphi\circ\underline\psi,\underline\beta})^*(\boxtimes_{\varphi\circ\underline\psi}((h_{lp})_{l=1}^r)_{p=1}^{m_l}))\right)\circ \boxtimes_{\varphi}(f_{\psi_l,\beta_l})_{l=1}^r\\
&= \left(\widetilde\nu_{\varphi,\alpha}\circ \boxtimes_{\varphi}(\widetilde h_{\alpha_l})_{l=1}^r \right)|_{V_{\underline\psi,\underline\beta}}.
 \end{align*}
\end{proof}
We are now in a position to state and prove the main result of this Section.
\begin{thm}\label{thm:equivalence} Let $d\in\NN_{\geq 1}$. 
    The assignments $(\mathcal{F}, \mu) \rightsquigarrow (\widetilde\FF,\widetilde\mu)$ and $h\rightsquigarrow\widetilde h$ as in Lemmata \ref{lem:locally-defined}, \ref{lem:ext-mu-morphism}, \ref{lem:ext-mu-FD} and \ref{lem:ext-mor} yield a functor
    \[
    (\ \ )_{\leq d}: \mathcal{FP}^{\leq d} \rightarrow \mathcal{FP}_{\leq d}
    \]
    which is quasi-inverse to the functor $(\ \ )^{\leq d}$. In particular, $\FP_{\leq 1}$, the category of factorized sheaves on $\sym_{\neq}(U)$, is equivalent to $\Vu$.
\end{thm}
\begin{proof}
Lemmata \ref{lem:locally-defined} and \ref{lem:ext-mu-FD} show that the assignment 
$(\mathcal{F}, \mu) \rightsquigarrow (\widetilde\FF,\widetilde\mu)$ maps objects in $\FP^{\leq d}$ to objects in $\FP_{\leq d}$. Lemma \ref{lem:ext-mor} shows that the assignment $h\rightsquigarrow\widetilde h$
maps morphisms in $\FP^{\leq d}$ to morphisms in $\FP_{\leq d}$, and it is straightforward to verify that composition is preserved. Hence we have a functor 
 $(\ \ )_{\leq d}: \mathcal{FP}^{\leq d} \rightarrow \mathcal{FP}_{\leq d}$. 

\medskip

    Let us now show that $(\ \ )^{\leq d}$ and $(\ \ )_{\leq d}$ are one the quasi-inverse of the other. Recall that for any $n\in\NN_{\leq d}$ there is a natural isomorphism of sheaves $f_{\id,(n)}\colon\widetilde{\FF}_{n}\to \FF_{n}$ and that for any morphism $h\colon \FF\to\GG$ in $\FP^{\leq d}$ the restriction of $\widetilde{h}$ to $\sym^n(U)$ is 
    \begin{align*}
\widetilde{h}_{\id,n}:=g_{\id,n}^{-1}\circ h_n\circ f_{\id,n}\colon \widetilde\FF_n\to \widetilde\GG_n.
\end{align*}
Therefore the morphisms $(f_{\id, (n)})_{n\leq d}$ are the components corresponding to $\FF$ of a  natural isomorphism from $(\ \ )^{\leq d} \circ (\ \ )_{\leq d}$ to $\id_{\mathcal{P}^{\leq d}}$. 

\medskip

We now turn to the factorization data. 
The restriction $(\widetilde\mu)^{\leq d}$  is a family of isomorphisms on $\sym(U)(\leq d)$, i.e., the isomorphisms are defined on the collection $\sym^{\beta}(U)$ for $|\beta|\leq d$, where \eqref{eq:extension-mu-piccolo} is in force. Then, \eqref{eq:effe-compa} applied to $r=1$, $\varphi=\id_U$, $\xi_1=\xi$ gives the equalities of isomorphisms of sheaves on $\sym(U)^m(\leq d)$
\begin{align*}
\mu_{\xi,\beta}\circ(\boxtimes_{\xi}(f_{\id,\beta_l})_{l=1}^m)&= b_{\xi,\beta}
^*(f_{\id,|\beta|})b_{\xi,\beta}
^*(f_{\xi,\beta}^{-1})\circ(\boxtimes_{\xi}(f_{\id,\beta_l})_{l=1}^m)=b_{\xi,\beta}
^*(f_{\id,|\beta|})\widetilde{\mu}_{\xi,\beta}.\end{align*}
Hence, $(f_{id,(n)})_{n\leq d}\colon((\widetilde\FF)^{\leq d},\widetilde\mu)\to (\FF,\mu)$ is an isomorphism in $\FP^{\leq d}$, and it is the component on $(\widetilde\FF,\widetilde\mu)^{\leq d}$ of an  isomorphism of functors $(\ )^{\leq d}\circ (\ )_{\leq d}\to \id_{\FP^{\leq d}}$.

\medskip

    Let us then show that the functor $(\ \ )_{\leq d} \circ (\ \ )^{\leq d}$ is naturally isomorphic to the identity functor $\id_{\mathcal{FP}_{\leq d}}$.
    Let $(\FF', \mu')$ be an object of $\mathcal{FP}_{\leq d}$. 
    Set $(\mathcal{F}, \mu) := (\FF', \mu')^{\leq d}$ and 
    $(\widetilde{\mathcal{F}}, \widetilde{\mu}) := (\mathcal{F}, \mu)_{\leq d}$.
    For each special open subset $V_{\varphi,\alpha}\in\mathcal O_d$  we define the isomorphism of perverse sheaves
\begin{align*}
(\eta_{\FF',\mu'})_{\varphi,\alpha}\colon \widetilde\FF|_{V_{\varphi,\alpha}}\longrightarrow \widetilde\FF_{\varphi,\alpha}&=c_{\varphi,\alpha}^*(\boxtimes_{\varphi}(\FF'_{\alpha_1},\ldots,\,\FF'_{\alpha_r}))\longrightarrow c_{\varphi,\alpha}^*b_{\varphi,\alpha}^*(\FF'_{|\alpha|}
)=\FF'_{|\alpha|}|_{V_{\varphi,\alpha}}\\
(\eta_{\FF',\mu'})_{\varphi,\alpha}&=c_{\varphi,\alpha}^*(\mu'_{\varphi,\alpha})\circ f_{\varphi,\alpha}
\end{align*}
We claim that these isomorphisms glue to give an isomorphism $\eta_{\FF',\mu'}$ in $\PPP_{\leq d}$. Indeed, if $\beta=(\beta_1,\,\ldots,\,\beta_r)$ is a refinement of $\alpha$, with $\beta_r\in(\NN_{\leq d})^{m_l}$ for $m_l\in\NN$ and $\underline\xi:=(\xi_1,\,\ldots,\,\xi_r)\in\prod_{l=1}^rE_2(m_l)$, we have
\begin{align*}
   (\eta_{\FF',\mu'})_{\varphi,\alpha}|_{V_{\varphi\circ\underline\xi,\underline\beta}}&=\left(c_{\varphi\circ\underline\xi,\underline\beta}^*(\prod_{l=1}^rb_{\xi_l,\beta_l})^*(\mu_{\varphi,\alpha}')\right)\circ (f_{\varphi,\alpha})|_{V_{\varphi\circ\underline\xi,\underline\beta}}\\
   (\eta_{\FF',\mu'})_{\varphi\circ\underline\xi,\underline\beta}&=\left(c_{\varphi\circ\underline\xi,\underline\beta}^*(\prod_{l=1}^rb_{\xi_l,\beta_l})^*(\mu_{\varphi,\alpha}')\right)\circ c_{\varphi\circ\underline\xi,\underline\beta}^*(\boxtimes_{\varphi}(\mu'_{\xi_l,\beta_l})_{l=1}^r)\circ f_{\varphi\circ\underline\xi,\underline\beta}
   \end{align*}   
and these two isomorphism coincide in virtue of \eqref{eq:effe-compa}.

\medskip

We show that the isomorphisms $\eta_{\FF',\mu'}\colon \widetilde\FF\to\FF'$ so defined are morphisms in $\FP_{\leq d}$. 

Let $\varphi\in E_2(r)$ for $r\in\NN$.  Spelling out the definition of $\widetilde\mu_{\varphi}$ in \eqref{eq:extension-mu-special-open} and the definition of $\eta_{\FF',\mu'}$, the proof boils down 
to the verification that for any product of special open subsets $V_{\underline\psi,\underline\beta}$, with $\underline\psi=(\psi_1,\,\ldots,\,\psi_r)$ and $\underline\beta=(\beta_1,\,\ldots,\,\beta_r)$, where 
$\beta_l\in(\NN_{\leq d})^{m_l}$ for $m_l\in\NN$,  $\alpha_l:=|\beta_l|$ and $l=1,\,\ldots,\,r$  the diagram below commutes.
 \begin{equation*}
            \begin{tikzcd}
\boxtimes_{\varphi}(\widetilde\FF|_{V_{\psi_l,\beta_l}})_{l=1}^r) 
\arrow[rrrrrr, "(\prod_lc_{\psi_l,\beta_l})^*b_{\varphi\circ\underline\psi,\underline\beta}^*(f_{\varphi\circ\underline\psi,\underline\beta}^{-1})\circ\boxtimes_{\varphi}(f_{\psi_l,\beta_l})_{l=1}^r"] 
\arrow[d, swap, "\boxtimes_{\varphi}(c_{\psi_l,\beta_l}^*(\mu_{\psi_l,\beta_l}')\circ f_{\psi_l,\beta_l})_{l=1}^r"]&&&&&& (b_{\varphi,\alpha}^*(\widetilde\FF))|_{V_{\underline\psi,\underline\beta}}
\arrow[d, "(\mu_{\varphi,\alpha}'|_{V_{\underline\psi,\underline\beta}})\circ b_{\varphi,\alpha}^*f_{\varphi,\alpha}"]\\
\boxtimes_{\varphi}(\FF'|_{V_{\psi_l,\beta_l}})_{l=1}^r)
\arrow[rrrrrr, "\mu'_{\varphi,\alpha}|_{V_{\underline\psi,\underline\beta}}"] 
&&&&&& (b_{\varphi,\alpha}^*(\FF'))|_{V_{\underline\psi,\underline\beta}}
\end{tikzcd} 
        \end{equation*}  
Commutativity follows by virtue of \eqref{eq:effe-compa}.

        \medskip
        
We show naturality of $\eta$. Let $h\colon (\FF', \mu')\to (\GG', \nu')$ be a morphism in $\mathcal{FP}_{\leq d}$. 
  We set 
  \begin{align*}&(\FF, \mu) := (\FF', \mu')^{\leq d},&&(\widetilde{\mathcal{F}}, \widetilde{\mu}) := (\mathcal{F}, \mu)_{\leq d},&&h:=(h')^{\leq d},\\
  &  (\GG, \nu) := (\GG', \nu')^{\leq d}, &&(\widetilde{\mathcal{G}}, \widetilde{\nu}) := (\mathcal{G}, \nu)_{\leq d}, &&\widetilde{h}=(h)_{\leq d}.
  \end{align*}
 We need to verify that for any $r\in \NN$, any $\alpha\in (\NN_{\leq d})^r$ and $\varphi\in E_2(r)$ the diagram below commutes
\begin{equation*}
            \begin{tikzcd}
\widetilde\FF|_{V_{\varphi,\alpha}} \arrow[r, "f_{\varphi,\alpha}"]
\arrow[d, swap, 
"g_{\varphi,\alpha}^{-1}\circ c_{\varphi,\alpha}^*(\boxtimes_{\varphi}(h'_{\alpha_l})_{l=1}^r)\circ f_{\varphi,\alpha}
"]
&c_{\varphi,\alpha}^*(\boxtimes_{\varphi}(\FF_{\alpha_1}',\,\ldots,\FF'_{\alpha_r})) \arrow[rr,"c_{\varphi,\alpha}^*(\mu'_{\varphi,\alpha})"]&&\FF'|_{V_{\varphi,\alpha}}\arrow[d, "h'"]\\
\widetilde\GG|_{V_{\varphi,\alpha}} \arrow[r, "g_{\varphi,\alpha}"]
&c_{\varphi,\alpha}^*(\boxtimes_{\varphi}(\GG_{\alpha_1}',\,\ldots,\GG'_{\alpha_r})) \arrow[rr,"c_{\varphi,\alpha}^*(\nu'_{\varphi,\alpha})"]&&\GG'|_{V_{\varphi,\alpha}}
\end{tikzcd} 
        \end{equation*}  
and this readily follows because $h'$ is a morphism in $\FP_{\leq d}$.
 Therefore $\eta=(\eta_{\FF',\mu'})$ is an isomorphism of functors $(\ )_{\leq d}\circ (\ )^{\leq d}\to \id_{\FP_{\leq d}}$, and this concludes the proof of the equivalence. The statement for $d=1$ follows from Proposition \ref{prop:varrho}. 
\end{proof}

Theorem \ref{thm:equivalence} for $d=1$ shows that, if $\FF$ in $\PPP_{\leq 1}(1)$,  then either $\mathrm{FD}(\FF)\neq\emptyset$  or it is a unique orbit for the automorphism group of $\FF$. The results in \cite[Sections 1B, 1C]{KS-snakes}
suggest that this might be the case also for objects in $\PPP(1)$.

\subsection{Extension functors}

In this section we show that the extension functors induced by the open embeddings $j_{\leq d,\leq c}$ and $j_{\leq d}$  for $c\geq d\geq 0$ are well behaved with respect to factorization data.

\begin{thm}\label{prop:def-estensione}
Let $c,d\in\NN$ with $d\leq c$ and let $\bullet$ be either $*$, $!$ or $!*$. The chain of  extension functors \eqref{dgm:lower*and!} gives rise to a chain of functors
\begin{equation}\label{dgm:lower*and!FP}
\begin{tikzcd}[row sep=huge]
 \mathcal{FP}_{\leq d} \arrow[rr, swap, "^p(j_{\leq d,\leq c})_\bullet"'] && \mathcal{FP}_{\leq c}\arrow[rr, swap, "^p(j_{\leq c})_\bullet"'] &&\mathcal{FP}
\end{tikzcd} 
\end{equation}
     As a consequence, $j^*_{\leq d}$ and $j^*_{\leq c,\leq d}$ are essentially surjective for every $c,\,d\in\NN$ with $c\leq d$. 
\end{thm}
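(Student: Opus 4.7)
The plan is to mirror the proof of Theorem~\ref{prop:def-funtori-restrizione}, replacing pull-back along an open embedding with its perverse extension. Writing $\mathbf{j}\colon Y\to Y'$ for either $j_{\leq d,\leq c}$ or $j_{\leq d}$, the chain \eqref{dgm:lower*and!FP} will follow by composition. Given a factorized perverse sheaf $(\FF,\mu)$ on $Y$, I would define a candidate factorization datum $\nu$ on the perverse sheaf ${}^p\mathbf{j}_\bullet\FF\in\mathcal{P}(1)$ by setting
\[
\nu_\varphi := {}^p(\mathbf{j}^n)_\bullet(\mu_\varphi),\qquad \varphi\in E_2(n),\ n\in\NN.
\]
That $\nu_\varphi$ has the correct source and target follows from commutativity of the diagrams \eqref{dgm:tensor-lower*and!} and \eqref{dgm:lower*and!}, which identify ${}^p(\mathbf{j}^n)_\bullet(\FF^{\boxtimes_\varphi n})$ with $({}^p\mathbf{j}_\bullet\FF)^{\boxtimes_\varphi n}$ and ${}^p(\mathbf{j}^n)_\bullet(b_\varphi^*\FF)$ with $(b'_\varphi)^*({}^p\mathbf{j}_\bullet\FF)$, respectively, where $b'_\varphi$ denotes the multiplication map on $Y'$.

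Next I would verify the three conditions of Definition~\ref{def:facdat} for $\nu$. Condition \eqref{a} is obtained by applying the functor ${}^p(\mathbf{j}^m)_\bullet$ to the equality \eqref{eq:facdat_a} for $\mu$, invoking commutativity of \eqref{dgm:big1} and \eqref{dgm:big2} from Lemmata~\ref{lem:a-tanti} and \ref{lem:treni-tensor}. Condition \eqref{item:c} follows from the $\Sn$-equivariance of $\mathbf{j}^n$: since ${}^p(\mathbf{j}^n)_\bullet$ intertwines $\sigma^*$ on $Y^n$ and on $(Y')^n$, equivariance of $\mu$ transfers to $\nu$. Functoriality on morphisms is analogous, using naturality of the identifications above together with \eqref{dgm:tensor-lower*and!}.

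The main technical point is the monodromy-braiding compatibility \eqref{eq:mon-bra} for $\nu$. The key inputs are the compatibility of $\underline{R}$ with extensions (the final assertion of Lemma~\ref{lem:treni-tensor}) together with the monodromy-extension identity \eqref{eq:monodromy-estension}; granted these, applying ${}^p(\mathbf{j}^n)_\bullet$ to the square \eqref{eq:mon-bra} for $\mu$ yields the corresponding square for $\nu$. For the essential-surjectivity consequence, I would use that for an open embedding $\mathbf{j}$ of stratified spaces and $\bullet\in\{*,!,!*\}$ there is a canonical natural isomorphism $\mathbf{j}^*\circ{}^p\mathbf{j}_\bullet\simeq\id$ at the level of perverse sheaves, which is compatible with the constructions above; hence every $(\FF,\mu)\in\FP_{\leq d}$ is isomorphic to $j^*_{\leq d}\bigl({}^p(j_{\leq d})_\bullet(\FF,\mu)\bigr)$, and analogously for $j^*_{\leq c,\leq d}$, yielding the claimed essential surjectivity.
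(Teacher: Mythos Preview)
Your proposal is correct and follows essentially the same approach as the paper: define $\nu_\varphi={}^p(\mathbf{j}^n)_\bullet(\mu_\varphi)$, use \eqref{dgm:tensor-lower*and!} and \eqref{dgm:lower*and!} for source and target, deduce operadic compatibility from \eqref{dgm:big1} and \eqref{dgm:big2}, obtain monodromy--braiding compatibility from \eqref{eq:monodromy-estension} and the compatibility of $\underline{R}$ with extensions in Lemma~\ref{lem:treni-tensor}, and derive equivariance from the $\Sn$-equivariance of $\mathbf{j}^n$ (via base change). The only slip is a harmless misattribution: diagram \eqref{dgm:big2} is in Lemma~\ref{lem:many-tensors}, not Lemma~\ref{lem:treni-tensor}.
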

\begin{proof}Let $\mathbf j\colon Y\to Y'$ be either $j_{\leq d,\leq c}$ or $j_{\leq c}$ and let
 $(\mathcal{F},\mu)$ be a factorized perverse sheaf on $Y$.
  In order to define the functor  $^p \mathbf j_\bullet$, we proceed as follows: let $n\geq 0$ and let $\varphi\in E_2(n)$. Let $b'_\varphi$ denote the restriction of $a_{\varphi}$  to $(Y')^n$ and $b_\varphi$ denote the restriction of $a_{\varphi}$  to $Y^n$. 

    By  \eqref{dgm:tensor-lower*and!} and \eqref{dgm:lower*and!}  we have  ${^p}(\mathbf j^n)_\bullet ( \mathcal{F}^{\boxtimes_\varphi n})=({^p}\mathbf j_\bullet\mathcal{F})^{\boxtimes_\varphi n}$ and 
    ${}^{p}(\mathbf j^n)_\bullet (b_{\varphi}^\ast \mathcal{F}) =b_{\varphi}^{'\ast}\, {}^p(\mathbf j_\bullet \mathcal{F}) $. 
    Then one defines  ${^p}\mathbf j_\bullet(\mathcal {F}, \mu):=({^p}\mathbf j_\bullet\mathcal{F}, \nu)$, where $\nu_{\psi}:={^p}(\mathbf j^n)_\bullet\mu_{\psi}$ for $\psi\in E_2(n)$. The perverse truncation may be omitted if $\bullet=!*$.  We show that this assignment gives a factorized perverse sheaf on $Y'$.

\medskip

We prove the braiding-monodromy compatibility. Let $\gamma$ be a path from $\varphi$ to $\id_U$ in $E_2(1)$ and let  $M'$ and $\underline{R}'$ denote monodromy and braiding relative to the space $Y'$, and 
$M$ and $\underline{R}$ denote monodromy and braiding relative to the space $Y$. Then
\begin{equation*}
M'_{\gamma,{}^p{\mathbf j}_\bullet\FF}\circ \nu_{\varphi}={}^p{\mathbf j}_\bullet(M_{\gamma,\FF})\circ {}^p{\mathbf j}_\bullet(\mu_\varphi)
={}^p{\mathbf j}_\bullet(\underline{R}_{\gamma,\FF})=\underline{R}_{\gamma,{}^p{\mathbf j}_\bullet(\FF)}\end{equation*}
where the first equality follows from \eqref{eq:monodromy-estension} and the last one follows from the compatibility of $\underline{R}$ with extensions, Lemma \ref{lem:treni-tensor}.

\medskip
    
Let  $\eta=\varphi\circ(\psi_1,\,\ldots,\,\psi_n)$ with $\varphi\in E_2(n)$, $\psi_l\in E_2(m_l)$ for $l=1,\,\ldots,\,n$ and $m=\sum_{l=1}^nm_l$. Compatibility with operadic composition follows from commutativity of the diagrams \eqref{dgm:big1} and \eqref{dgm:big2}. Indeed, 
\begin{align*}
  \nu_{\eta}&={}^p({\mathbf j^m})_\bullet\mu_{\eta}= {}^p({\mathbf j^m})_\bullet\left((\prod_{l=1}^n b_{\psi_l})^*
  (\mu_{\varphi})\circ \boxtimes_{\varphi} (\mu_{\psi_l})_{l=1}^n\right)\\
  &=\left((\prod_{l=1}^n b_{\psi_l})^* \;{}^p({\mathbf j^n})_\bullet \mu_{\varphi}\right)\circ \boxtimes_{\varphi} (^p(\mathbf j^{m_l})_\bullet \mu_{\psi_l})_{l=1}^n= (\prod_{l=1}^n b_{\psi_{l}})^*
(\nu_{\varphi})\circ \boxtimes_{\varphi} (\nu_{\psi_{l}})_{l=1}^n.
 \end{align*}

\medskip

We prove symmetric equivariance. Let $\sigma\in \Sn$. By base change, \cite[p.625]{dCM}:
\begin{equation*}
  \nu_{\varphi^\sigma}={}^p({\mathbf j^n})_\bullet\mu_{\varphi^\sigma}={}^p({\mathbf j^n})_\bullet\sigma^*(\mu_\varphi)=\sigma^* {}^p({\mathbf j}^n)_\bullet\mu_\varphi=\sigma^*\nu_{\varphi}.
\end{equation*}

\medskip 

Concerning morphisms, it follows from  compatibility of ${}^p{\mathbf j}_\bullet$ with  $\boxtimes_\varphi$, and $b_\varphi^*$, see \eqref{dgm:tensor-lower*and!} and \eqref{dgm:lower*and!}, that the extension of any morphism of factorized perverse sheaves on $Y$ is a morphism of factorized perverse sheaves on $Y'$.

\medskip 

Finally, essential surjectivity follows because ${\mathbf j}^*\circ \,{}^p\mathbf j_\bullet(\FF,\mu)=(\FF,\mu)$ for every factorized sheaf $(\FF,\mu)$ in $\FP_{\leq d}$.
\end{proof}

The extension  functors can also be defined on the (isomorphic) categories of vertical  factorized perverse sheaves, with compatibility.
 \begin{cor}\label{cor:factvb}
   Let $\bullet$ be either $\ast$, $!$, or ${!\ast}$ and let $d\in \NN$.
    The  extension functor ${}^p(j_{\leq d})_\bullet$ fits into the following commutative diagram, where the vertical arrows are the forgetful isomorphisms from Proposition \ref{prop:factvb}.  
    \[   \begin{tikzcd}
\mathcal{FP}_{\leq d} \ar[r,  "{}^p(j_{\leq d})_\bullet"]\ar[d, "\cong"]& \mathcal{FP} \ar[d, "\cong"]  \\
  \mathcal{FP}^v_{\leq d} 
        \ar[r, swap,  "{}^p(j_{\leq d})_\bullet"]
        &\mathcal{FP}^v 
    \end{tikzcd}
    \]
    \end{cor}
\begin{proof}
Commutativity of the diagram follows from the definition of the functors ${^p}(j_{\leq d})_{*}$, ${^p}j_{\leq d, !}$ and $(j_{\leq d})_{!*}$ in Theorem \ref{prop:def-estensione}, and of the forgetful functors from Proposition \ref{prop:factvb}.
\end{proof}

Composing $(\ )_{\leq d}$ with ${}^p(j_{\leq d})_*$, ${}^p(j_{\leq d})_!$ or ${}^p(j_{\leq d})_{!*}$ gives extension functors $\FP^{\leq d}\to \FP$. 
We now show a natural adjunction property for restriction and extension by zero functors.

\begin{thm}
 \label{thm:extension-geometric}Let $d\in \NN_{\geq1}$. The functor ${}^p(j_{\leq d})_!\colon \FP_{\leq d}\to \FP$ is left adjoint to $j_{\leq d}^*$. As a consequence,  ${}^p(j_{\leq d})_!\circ(\ )_{\leq d}\colon \FP^{\leq d}\to \FP$ is left adjoint to the truncation functor $(\ )^{\leq d}$.
 \end{thm}
\begin{proof}
At the level of perverse sheaves, $^p\!(j_{\leq d})_!$ is left adjoint to $j_{\leq d}^*=j_{\leq d}^!$, that is,  we have a natural  bijection $\mathrm{Hom}_{\PPP_{\leq d}(1)}({\mathcal F},j_{\leq d}^*\widetilde{\mathcal G})\to 
\mathrm{Hom}_{\PPP(1)}(\,^p(j_{\leq d})_!{\mathcal F},\widetilde{\mathcal G})$ for any ${\mathcal F}$ in $\PPP_{\leq d}(1)$ and any $\widetilde{\mathcal G}$ in $\PPP(1)$. 

In order to ensure that the bijection carries over at the level of factorized sheaves, it is enough to show that,  if $({\mathcal F},\mu)$ and 
$(\widetilde{\mathcal G},\widetilde{\nu})$ are factorized, then the morphisms of perverse sheaves 
$\iota_{\mathcal F}\colon {\mathcal F}\to j_{\leq d}^*{}^p(j_{\leq d})_! ({\mathcal F})$ and $\iota_{\widetilde{\mathcal G}}\colon {}^p(j_{\leq d})_! j_{\leq d}^*\widetilde{\mathcal G}\to \widetilde{\mathcal G}$ corresponding through the adjunction to  $\id_{{}^p(j_{\leq d})_! {\mathcal F}}$ and $\id_{j_{\leq d}^*\widetilde{\mathcal G}}$, respectively, satisfy condition \eqref{eq:def-morphisms} for the pairs of factorized sheaves $({\mathcal F},\mu)$,  $j_{\leq d}^*\,{}^p(j_{\leq d})_! ({\mathcal F}, \mu)$,  and  
${}^p(j_{\leq d})_!\,j_{\leq d}^*(\widetilde{\mathcal G},\widetilde{\nu})$, $(\widetilde{\mathcal G}, \widetilde{\nu})$, respectively. 

\medskip

Now, $\iota_{\mathcal F}$ is (naturally isomorphic to) the identity and the restriction of  ${}^p(j_{\leq d})_! \mu$ to $\sym_{\leq d}(U)$  is $\mu$, so there is nothing to prove in this case. 

We consider $\iota_{\widetilde{\mathcal G}}$. We need to show that for any $n\in\NN$ and any $\varphi\in E_2(n)$ the diagram below is commutative. 
 \begin{equation}%\label{eq:def-morphisms}
            \begin{tikzcd}[row sep=huge]
( {}^p(j_{\leq d})_! j_{\leq d}^*\widetilde{\mathcal G})^{\boxtimes_{\varphi} n} 
\arrow[rr, "{}^p(j^n_{\leq d})_!\,(j^n_{\leq d})^*\widetilde\nu_\varphi"] 
\arrow[d, swap, "\iota_{\widetilde{\GG}}^{\boxtimes_\varphi n}"]& 
& a_\varphi^\ast \mathcal{F}
\arrow[d, "a_\varphi^\ast (\iota_{\widetilde \GG})"]\\
 \widetilde\GG^{\boxtimes_{\varphi} n}
\arrow[rr, "\widetilde\nu_{\varphi}"]  & 
& a_{\varphi}^\ast \widetilde\GG
\end{tikzcd} 
        \end{equation}

By definition, $\iota_{\widetilde\GG}$ is the unique morphism of perverse sheaves  on $\sym(U)$ whose restriction to $\sym_{\leq d}(U)$ is the identity. Functoriality of $j_{\leq d}^*$ and \eqref{dgm:upperstar} give
\begin{align*}
(j_{\leq d}^n)^*\left(\widetilde\nu^{-1}_{\varphi}\circ a_\varphi^\ast( \iota_{\widetilde \GG})\circ {}^p(j^n_{\leq d})_!\,(j^n_{\leq d})^*\widetilde\nu_\varphi\right)&=
(j_{\leq d}^n)^*(\widetilde\nu^{-1}_{\varphi})\circ (j_{\leq d}^n)^*(a_\varphi^\ast(\iota_{\widetilde \GG}))\circ (j_{\leq d}^n)^*{}^p(j^n_{\leq d})_!\,(j^n_{\leq d})^*\widetilde\nu_\varphi\\
&=(j_{\leq d}^n)^*(\widetilde\nu^{-1}_{\varphi})\circ a^*_{\varphi,\leq d}(j^*_{\leq d}(\iota_{\widetilde\GG}))\circ (j_{\leq d}^n)^*(\widetilde\nu_{\varphi})\\
&=\id_{(j^*_{\leq d}\widetilde\GG)^{\boxtimes_\varphi n}}.
\end{align*} 
Hence, $\widetilde\nu^{-1}_{\varphi}\circ a_\varphi^\ast \iota_{\widetilde \GG}\circ {}^p(j^n_{\leq d})_!\,(j^n_{\leq d})^*\widetilde\nu_\varphi\colon ( {}^p(j_{\leq d})_! j_{\leq d}^*\widetilde{\mathcal G})^{\boxtimes_{\varphi} n} \longrightarrow  \widetilde\GG^{\boxtimes_{\varphi} n}$ is the only morphism of perverse sheaves on $\sym(U)^n$ whose restriction to $\sym_{\leq d}(U)^n$ is $\id_{(j^*_{\leq d}\widetilde\GG)^{{\boxtimes_\varphi}n}}$.
On the other hand, \eqref{dgm:tensor-lower*and!} implies
\begin{equation*}
(j_{\leq d}^n)^*(\iota_{\widetilde\GG}^{\boxtimes_\varphi n})=(j^*_{\leq d}\iota_{\widetilde\GG})^{\boxtimes_\varphi n}=\id_{(j^*_{\leq d}\widetilde\GG)^{\boxtimes_\varphi n}}
\end{equation*}
so commutativity of the diagram follows from uniqueness.

The last statement follows from Theorem \ref{thm:equivalence} and the compatibility \eqref{dgm:compatibility-limit}, which holds also for factorized sheaves.
\end{proof}

\begin{remark}\label{rem:uniqueness}{\rm For a factorized perverse sheaf $({\mathcal F},\mu)$ in $\FP_{\leq d}$,  the collection ${}^p(j_{\leq d})_!(\mu)$ is the unique element in $\mathrm{FD}({}^p(j_{\leq d})_!{\mathcal F})$ extending $\mu$. Indeed, if $\nu\in \mathrm{FD}({}^p(j_{\leq d})_!{\mathcal F})$ extends $\mu$,  by  the adjunction at the level of factorized perverse sheaves there is a unique morphism 
$({}^p(j_{\leq d})_!{\mathcal F},{}^p(j_{\leq d})_!(\mu))\to({}^p(j_{\leq d})_!{\mathcal F},\nu)$ extending the identity 
$({\mathcal F},\mu)\to ({}^p(j_{\leq d})_!{\mathcal F},\,j_{\leq d}^*(\nu))=({\mathcal F},\,\mu)$. However,  adjunction at the level of perverse sheaves implies that the identity is the unique morphism of perverse sheaves 
${}^p(j_{\leq d})_!{\mathcal F}\to \;{}^p(j_{\leq d})_!{\mathcal F}$ corresponding to the identity in $\mathrm{Hom}_{\PPP_{\leq d}(1)}({\mathcal F},j_{\leq d}^*\,({}^p(j_{\leq d})_!{\mathcal F}))=\mathrm{Hom}_{\PPP_{\leq d}(1)}({\mathcal F},{\mathcal F})$.  Therefore,  the identity is a morphism $({}^p(j_{\leq d})_!{\mathcal F},\,{}^p(j_{\leq d})_!(\mu))\to({}^p(j_{\leq d})_!{\mathcal F},\nu)$,  so \eqref{eq:def-morphisms} forces $\nu={}^p(j_{\leq d})_!(\mu)$.}
\end{remark}

\section{Inverse limits}

In this Section we show that the categories $\FP^{\leq d}$ and $\FP_{\leq d}$ fit into compatible systems of categories whose limit is $\FP$. We first introduce the necessary terminology.

\medskip
 
A projective system of categories  indexed by $\NN$ is a pair $(({\mathcal C}_d)_{d\in {\mathbb N}},(\pi_{de})_{d\leq e})$
where $({\mathcal C}_d)_{d\in {\mathbb N}}$ is a collection of categories indexed by ${\mathbb N}$ and $\pi_{de}\colon{\mathcal C}_e\to {\mathcal C}_d$ for $d,\,e\in\NN$ with $d\leq e$ denote a collection of  functors satisfying $\pi_{de}\circ\pi_{ef}=\pi_{df}$ if $d\leq e\leq f$ and $\pi_{dd}=\id_{\mathcal C_d}$ for all $d\in \NN$. 

In the spirit of \cite[\S 5.1]{Sch} we define the inverse limit  category 
 $\varprojlim_{d\in {\mathbb N}}{\mathcal C}_d$,\label{limit} 
the category whose objects  are systems $\left((E_d)_{d\in {\mathbb N}},(\phi_{de})_{d\leq e}\right)$, where each $E_d$ is an object in ${\mathcal C}_d$ and each $\phi_{de}$ is an isomorphism 
$\phi_{de}\colon \pi_{de}(E_e)\to E_d$ in ${\mathcal C}_d$ satisfying $\phi_{de}\circ \pi_{de}(\phi_{ef})=\phi_{df}$ for $d\leq e\leq f$. In particular, $\phi_{ee}\circ\phi_{ee}=\phi_{ee}$ for every $e\in\NN$, so for the categories we focus on, $\phi_{ee}=\id_{E_e}$.  Morphisms from $\left((E_d)_{d\in \NN},(\phi_{de})_{d\leq e}\right)$ to $\left((E'_d)_{d\in \NN},(\phi'_{de})_{d\leq e}\right)$ are defined as families of morphisms $\psi_d\colon E_d\to E_d'$ in ${\mathcal C}_d$ for each $d\in {\mathbb N}$ such that the diagram below commutes for all $e\geq d$:
\begin{equation}\label{eq:CDlimit}
\begin{CD}
\pi_{de}E_e@>{\phi_{de}}>>E_d\\
@V{\pi_{de}(\psi_e)}VV @VV{\psi_d}V\\
\pi_{de}E'_e@>{\phi'_{de}}>>E'_d\\
\end{CD}
\end{equation}
%Here, the vertical arrow on the left is obtained by applying the functor $\pi_{de}$ to the morphism $\psi_e\colon E_e\to E_e'$.
This construction is a special case of the quasi-limit, \cite[p. 201]{G}. 

\medskip

Let $(({\mathcal C}_d)_{d\in {\mathbb N}},(\pi_{de})_{d\leq e})$ and $(({\mathcal C}'_d)_{d\in {\mathbb N}}, (\pi'_{de})_{d\leq e})$ be projective systems of categories. A sequence of functors $(G_d\colon {\mathcal C}_d\to {\mathcal C}'_d)_{d\in{\mathbb N}}$ that are compatible with the $\pi_{de}$ and $\pi_{de}'$ for $d\leq e$ determines a  functor $G\colon \varprojlim_{d\in {\mathbb N}}{\mathcal C}_d\to \varprojlim_{d\in {\mathbb N}}{\mathcal C}'_d$ mapping an object $\left((E_d)_{d\in \!{\mathbb N}},(\phi_{de})_{d\leq e}\right)$ to
$\left(\!(G_dE_d)_{d\in {\mathbb N}},\!(G_d(\phi_{de}))_{d\leq e}\right)$ and a morphism $(f_d)_{d\in {\mathbb N}}\colon\!\left((E_d)_{d\in {\mathbb N}},(\phi_{de})_{d\leq e}\right)\to\!\left((E'_d)_{d\in {\mathbb N}},(\phi'_{de})_{d\leq e}\!\right)$ to $(G_d(f_d))_{d\in {\mathbb N}}$.  We call this functor the limit of the sequence $(G_d)_{d\in {\mathbb N}}$.

\medskip

Therefore, the collections  
\begin{equation}
((\FP_{\leq d})_{d\in\NN}, (j_{\leq d,\leq c}^*)_{d\leq c}),\quad\mbox{ and }\quad ((\FP^{\leq d})_{d\in\NN}, ((\ )^{\leq d,\leq c})_{d\leq c}) 
\end{equation}
are projective systems of categories indexed by $\NN$
and the sequence of equivalences given by the truncation functors $(\ )^{\leq d}\colon \FP_{\leq d}\to \FP^{\leq d} $ for each $d\in \NN$ fit into commutative diagrams of functors  for any $d\leq c$
\begin{equation}
\begin{tikzcd}
\FP_{\leq c} \arrow[rr, "(\ )^{\leq c}"]\arrow[d, swap, "j_{\leq d,\leq c}^*"]&&\FP^{\leq c}\arrow[d, "(\ )^{\leq d,\leq c}"]\\
\FP_{\leq d} \arrow[rr, "(\ )^{\leq d}"]&&\FP^{\leq d}
    \end{tikzcd}
   \end{equation}
We consider now their limits.

\begin{thm}\label{thm:limit}\begin{enumerate}
   \item The  inverse limit of $((\FP^{\leq d})_{d\in\NN}, ((\ )^{\leq d,\leq c}))_{d\leq c})$ is equivalent to $\FP$. 
     \item The  inverse limit of $((\FP_{\leq d})_{d\in\NN}, (j_{\leq d,\leq c}^*)_{d\leq c})$ is equivalent to $\FP$. 
    \item Through the equivalences $\FP\simeq\varprojlim_{d\in {\mathbb N}}(\FP_{\leq d})$ and $\FP\simeq \varprojlim_{d\in {\mathbb N}}(\FP^{\leq d})$, the limit of the sequence of the truncation functors $((\ )^{\leq d}\colon \FP_{\leq d}\to\FP^{\leq d})_{d\in \NN}$ is naturally isomorphic to the identity functor on $\FP$.
 %   \item The functors $(\ )^{\leq d}$ are faithful for every $d\in \NN$. 
\end{enumerate}
\end{thm}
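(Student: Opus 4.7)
For assertion (1), my plan is to construct mutually quasi-inverse functors between $\FP$ and $\varprojlim_{d}\FP^{\leq d}$. The functor $F\colon\FP\to\varprojlim_d\FP^{\leq d}$ will send $(\FF,\mu)$ to the system $((\FF,\mu)^{\leq d})_{d\in\NN}$ equipped with the identity transition isomorphisms. For the quasi-inverse, I would exploit that $\sym(U)=\coprod_n\sym^n(U)$ is a disjoint union of its graded components, so a perverse sheaf on $\sym(U)$ amounts to the data of a perverse sheaf on each $\sym^n(U)$. Given a compatible system $((\FF_d,\mu^{(d)}),(\phi_{de}))$ in $\varprojlim_d\FP^{\leq d}$, I would set the $n$-th component to $\FF_d(n)$ for any $d\geq n$, using the $\phi_{de}$ to identify the different choices. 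The factorization isomorphisms $\mu_{\varphi,\alpha}$ for $\varphi\in E_2(k)$ and $\alpha=(m_1,\ldots,m_k)$ are read off from $\mu^{(d)}_{\varphi,\alpha}$ for any $d\geq\sum_i m_i$, and the three axioms of a factorization datum are inherited from those of the $\mu^{(d)}$. Checking that $F$ and this inverse are mutually quasi-inverse is then routine.

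For assertion (2), the approach is analogous but relies on descent for perverse sheaves rather than on a disjoint-union decomposition, since $\sym(U)=\bigcup_d\sym_{\leq d}(U)$ is an exhaustive open cover (every multiset in $\sym(\CC)$ has finite maximum multiplicity). A compatible system $((\FF_d,\mu^{(d)}),(\phi_{de}))$ in $\varprojlim_d\FP_{\leq d}$, with $\phi_{de}\colon j_{\leq d,\leq c}^*\FF_c\to\FF_d$ satisfying the cocycle relation, is precisely a descent datum, which glues to a perverse sheaf $\FF$ on $\sym(U)$. Each $\mu_\varphi$ on $\sym(U)^n$ glues analogously from the $\mu^{(d)}_\varphi$ on $\sym_{\leq d}(U)^n$, because the axioms of Definition \ref{def:facdat} are local in nature.

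For assertion (3), I would observe that for $(\FF,\mu)\in\FP$ the equality $(j_{\leq d}^*(\FF,\mu))^{\leq d}=(\FF,\mu)^{\leq d}$ holds functorially in $d$, since both sides compute the restriction of $(\FF,\mu)$ to $\sym^{\leq d}(U)$. Through the equivalences constructed in (1) and (2), this identification shows that the limit of the sequence of truncation functors $((\ )^{\leq d})_{d\in\NN}$ corresponds, up to natural isomorphism, to the identity on $\FP$.

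The main technical point is assertion (4), which I plan to handle as follows. Suppose $f,g\colon(\FF,\mu)\to(\GG,\nu)$ in $\FP_{\leq d}$ satisfy $f^{\leq d}=g^{\leq d}$, i.e.\ $f$ and $g$ agree on $\sym^m(U)$ for every $m\leq d$. For each $n>d$, any point of $\sym^n_{\leq d}(U)$ has at least two distinct underlying complex numbers, since the total degree $n$ strictly exceeds the cap $d$ on multiplicities. Such a point therefore lies in the image of an open embedding $b_{\varphi,\alpha}\colon\prod_{i=1}^k\sym^{m_i}(U)\to\sym^n(U)$ for some $\varphi\in E_2(k)$ with $k\geq 2$, $\sum_i m_i=n$ and every $m_i\leq d$. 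Restricting the morphism axiom \eqref{eq:def-morphisms} to $\prod_i\sym^{m_i}(U)$ yields a commutative square linking $b_{\varphi,\alpha}^*f$ with $\boxtimes_\varphi(f(m_1),\ldots,f(m_k))$ through the isomorphisms $\mu_{\varphi,\alpha}$ and $\nu_{\varphi,\alpha}$, and similarly for $g$. Since each $m_i\leq d$, the hypothesis forces $\boxtimes_\varphi(f(m_i))_i=\boxtimes_\varphi(g(m_i))_i$, and then commutativity of the square forces $b_{\varphi,\alpha}^*f=b_{\varphi,\alpha}^*g$, hence $f=g$ on the image of $b_{\varphi,\alpha}$. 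Since such images cover $\sym^n_{\leq d}(U)$, we conclude $f=g$ in degree $n$, and thus globally.
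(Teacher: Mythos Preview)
Your proposal is correct and, for parts (1), (3), and (4), follows essentially the same line as the paper. In (4) you phrase the covering argument slightly differently (emphasizing $n>d$), but the content is identical: every point of $\sym^n_{\leq d}(U)$ lies in the image of some $a_{\varphi,\lambda}$ with all parts $\lambda_i\leq d$, and the morphism axiom \eqref{eq:def-morphisms} then forces local agreement.

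The one genuine difference is in part (2). You treat the increasing open filtration $\{\sym_{\leq d}(U)\}_{d\in\NN}$ as a cover and invoke descent for perverse sheaves (which form a stack, \cite[Corollaire~2.1.22]{BBD}) to glue both the sheaf and the factorization isomorphisms. The paper instead exploits the elementary observation that $\sym^m_{\leq d}(U)=\sym^m(U)$ for $m\leq d$, so the $m$-th graded component of any $\FF\in\FP_{\leq d}$ is already a perverse sheaf on the full $\sym^m(U)$; this allows a direct component-by-component reconstruction $\widetilde{\GG}=(\GG^d_d)_{d\geq0}$ exactly parallel to (1), with faithfulness and fullness of $\mathcal{Z}$ verified separately by hand. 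Your descent argument is cleaner and more conceptual, and handles full faithfulness and essential surjectivity simultaneously; the paper's approach is more elementary and makes the parallel with (1) completely explicit, at the cost of more bookkeeping when verifying the factorization axioms for the reconstructed $\mu$.
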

\begin{proof} (1) Let ${\mathcal Y}\colon \FP\to \varprojlim_{d\in\NN} \FP^{\leq d}$ be the functor given by 
\begin{equation*}(({\mathcal F}_n)_{n\in\NN},(\mu_\varphi)_{\varphi\in E_2(m), m\in\NN})
\mapsto((({\mathcal F}_n)_{n\leq d},(\mu_\varphi^{\leq d})_{\varphi\in E_2(m), m\in\NN}) _{d\in\NN},(\id_{dc})_{d\leq c})\end{equation*} on objects and by $(f_n)_{n\in\NN}\mapsto ((f_n)_{n\leq d})_{d\in\NN})$ on morphisms.  It is fully faithful by construction. We show that it is essentially surjective. Let $(({\mathcal G}^d,\mu^d)_{d\in\NN_{\geq1}},(\phi_{dc})_{d\leq c})$ be an object in $ \varprojlim_{d\in\NN} \FP^{\leq d}$, that is,  ${\mathcal G}^d=( {\mathcal G}^d_n)_{n\leq d}\in\PPP^{\leq d}(1)$ and  $\mu^d\in \mathrm{FD}(\GG^d)$, and  $\phi_{dc}\colon (\GG^c,\mu^c)^{\leq d}\to(\GG^d,\mu^d)$ 
for $d\leq c$ are isomorphisms satisfying 
$\phi_{df}=\phi_{de}\circ (\phi_{ef})^{\leq d,\leq e}$ for all $d\leq e\leq f$. 
Then $\widetilde{\GG}=({\mathcal G}^d_d)_{d\in\NN}$ is an  object in $\PPP(1)$.  We show that it
admits a factorization datum $\mu=(\mu_{\varphi})_{\varphi\in E_2(m), m\in\NN}$ such that $
{\mathcal Y}(({\mathcal G}^d_d)_{d\in\NN},\mu)\simeq(({\mathcal G}^d,\mu^d)_{d\in\NN_{\geq1}},(\phi_{cd})_{c\leq d})$.  

\medskip

Let $n\in\NN$ and $\varphi\in E_2(n)$. On the connected component of $(\sym(U)^n)(d)$ corresponding to $\alpha=(\alpha_1,\,\ldots,\,\alpha_n)$, we define the isomorphism of sheaves 
\begin{align*}
  \mu_{\varphi,\alpha}&\colon \boxtimes_{\varphi}(\GG_{\alpha_1}^{\alpha_1},\,\ldots,\GG_{\alpha_n}^{\alpha_n})\longrightarrow a_{\varphi,\alpha}^*(\GG^d_d)  \\
  \mu_{\varphi,\alpha}&:=\mu_{\varphi,\underline{d}}^d\circ\boxtimes_{\varphi}((\phi^{\alpha_1}_{\alpha_1 d})^{-1},\,\ldots,\,(\phi^{\alpha_n}_{\alpha_n d})^{-1})
\end{align*}
where $\phi^{\alpha_i}_{\alpha_i d}$ is the restriction of $\phi_{\alpha_i d}$ to the top degree component $\sym^{\alpha_i}(U)$ of $\sym^{\leq \alpha_i}(U)$. This way we obtain a collection of isomorphisms $\mu:=(\mu_{\varphi})_{\varphi\in E_2(n), n\in\NN}$.

\medskip

We verify that $\mu$ satisfies braiding-monodromy compatibility. Let $\gamma$ be a path in $E_2(1)$ from $\varphi$ to $\id_U$. Since $\phi_{dd}=\id$, the diagram \eqref{eq:mon-bra} for $\GG^d_d$, on $\sym^{d}(U)$ becomes
\begin{equation*}\label{eq:limit-mon-bra}
\begin{tikzcd}%[row sep=huge]
 \boxtimes_{\varphi}\GG_{d}^{d}
 \arrow[rr,"{\boxtimes_{\varphi}\id}"]
 \arrow[drr, swap, "\underline{R}_{\gamma,\widetilde{\GG}}"]
 && \boxtimes_{\varphi}\GG_{d}^{d} \arrow[rr, "{\mu_{\varphi,\alpha}^d}"]  \arrow[d,"{\underline{R}_{\gamma,\GG^d}}"] && a_{\varphi,\alpha}^\ast{\GG^d_d}\arrow[dll, "{M_{\gamma,\widetilde{\GG}}}"]
 \\
%\GG_{d}^{d} \arrow[rr,"(\phi_{d\, d}^{d})^{-1}"]
 && \GG_{d}^{d} 
\end{tikzcd} 
        \end{equation*}
The right triangle is braiding-monodromy compatibility for $\GG^d$ and the left square commutes by naturality of  $\underline{R}$.
Hence, the exterior square commutes, giving \eqref{eq:mon-bra}.  

\medbreak
 
We verify that $\mu$ satisfies \eqref{eq:factdat_a_partition}. Observe that the morphism property of $\phi_{cd}$ with $c\leq d$ restricted to the connected component of $\sym^\beta(U)$ corresponding to the composition $\beta=(\beta_1,\,\cdots,\,\beta_n)$ of $c$, for $\psi\in E_2(n)$ gives the commutativity of the diagram
\begin{equation}\label{eq:phi-top}
\begin{tikzcd}
    \boxtimes_{\psi}(\GG_{\beta_l}^d)_{l=1}^n
    \arrow[d,swap,"{\boxtimes_{\psi}(\phi_{cd}^{\beta_1},\,\ldots,\,\phi_{cd}^{\beta_n})}"]
   \arrow[rr,"\mu_{\psi,\beta}^d"]&&a_{\psi,\beta}^*(\GG^d)\arrow[d,"a_{\psi,\beta}^*(\phi_{cd}^d)"]\\
  \boxtimes_{\psi}(\GG_{\beta_l}^c)_{l=1}^n
   \arrow[rr,"\mu_{\psi,\beta}^c"]&&a_{\psi,\beta}^*(\GG^c)  
     \end{tikzcd}
\end{equation} and that the compatibility of the isomorphisms $\phi_{cd}$ gives $\phi_{ce}^c=\phi_{cd}^c\circ \phi_{de}^c$ for $c\leq d\leq e$.  

For $l=0,\,\ldots,\,n$, let $m_l\in\NN$ with $m_0=0$, and set $m=\sum_{l=1}^n m_l$. Let $\varphi\in E_2(n)$ and 
$\psi_l\in E_2(m_i)$ for each $l\geq 1$. For 
$\alpha=(\alpha_1,\,\ldots,\alpha_m)\in \NN^m$ with $|\alpha|=d$, we set $\beta_l=(\alpha_{m_1+\cdots+ m_{l-1}+1},\,\ldots,\alpha_{m_1+\cdots+m_l})$ for $l=1,\,\ldots, n$, so that $\beta_{lp}=\alpha_{m_1+\cdots+ m_{l-1}+p}$ and $|\beta_l|=\sum_{p=m_1+\cdots+ m_{l-1}+1}^{m_1+\cdots+ m_{l-1}+m_l}\alpha_p$ and $\sum_{l=1}^m|\beta_l|=d$. Then, using in sequence:  the definition of $\mu$; functoriality of $a_{\psi_l\beta_l}^*$; commutativity of \eqref{eq:phi-top} and functoriality of $\boxtimes_{\varphi}$; compatibility of operadic composition of $\boxtimes$,  compatibility among the $\phi_{cd}$ and compatibility with operadic composition of $\mu^d$ we obtain 
\begin{align*} 
&((\Pi_{l=1}^n a_{\psi_l,\beta_l})^\ast(\mu_{\varphi,(|\beta_1|,\,\ldots,\,|\beta_n|)}))\circ
 \boxtimes_\varphi(\mu_{\psi_l,\beta_l})_{l=1}^n\\
 &=\left((\Pi_{l=1}^n a_{\psi_l,\beta_l})^\ast(\mu^d_{\varphi,(|\beta_1|,\,\ldots,\,|\beta_n|)}\circ \boxtimes_{\varphi}((\phi_{|\beta_l|,d}^{|\beta_l|})^{-1})_{l=1}^n)\right)\circ\boxtimes_{\varphi}
 \left(\mu_{\psi_l,\beta_l}^{|\beta_1|}\circ\boxtimes_{\psi_l}((\phi_{\beta_{lp}|\beta_l|}^{\beta_{lp}})^{-1})_{p=1}^{m_l}\right)_{l=1}^n\\
 &=(\Pi_{l=1}^n a_{\psi_l,\beta_l})^\ast(\mu^d_{\varphi,(|\beta_1|,\,\ldots,\,|\beta_n|)})\circ \boxtimes_{\varphi}(a_{\psi_l,\beta_l}^*((\phi_{|\beta_l|d}^{|\beta_l|})^{-1})_{l=1}^n)\circ \boxtimes_{\varphi}
 \left(\mu_{\psi_l,\beta_l}^{|\beta_1|}\circ\boxtimes_{\psi_l}((\phi_{\beta_{lp}|\beta_l|}^{\beta_{lp}})^{-1})_{p=1}^{m_l}\right)_{l=1}^n\\
&=(\Pi_{l=1}^n a_{\psi_l,\beta_l})^\ast(\mu^d_{\varphi,(|\beta_1|,\,\ldots,\,|\beta_n|)})\circ 
\boxtimes_{\varphi}(\mu_{\psi_l,\beta_l}^d)_{l=1}^n\circ \boxtimes_{\varphi}(\boxtimes_{\psi_l}((\phi_{|\beta_l|d}^{\beta_{lp}})^{-1}(\phi_{\beta_{lp}|\beta_l|}^{\beta_{lp}})^{-1})_{p=1}^{m_l})_{l=1}^n\\
&=\mu_{\varphi\circ\underline{\psi},\alpha}^d\circ\boxtimes_{\varphi\circ\underline{\psi}}((\phi^{\alpha_1}_{\alpha_1 d})^{-1},\,\ldots,\,(\phi^{\alpha_m}_{\alpha_m d})^{-1})= \mu_{\varphi\circ\underline{\psi},\alpha}.\end{align*}

\begin{comment}
\begin{equation*}\label{eq:limit-mon-bra}
\begin{tikzcd}%[row sep=huge]
 \boxtimes_{\varphi}(\GG_{\alpha_1}^{\alpha_1},\ldots,
 \GG_{\alpha_n}^{\alpha_n})
 \arrow[rr,"{\boxtimes_{\varphi}((\phi_{\alpha_l d}^{\alpha_l})^{-1})_{l=1}^n}"]
 \arrow[d, swap, "\underline{R}_{\gamma,\widetilde{\GG}}"]
 && \boxtimes_{\varphi}(\GG_{\alpha_1}^{d},\ldots,
 \GG_{\alpha_n}^{d})
 \arrow[rr, "\mu_{\varphi,\alpha}^d"]\arrow[d,"\underline{R}_{\gamma,\GG^d}"] & 
& a_{\varphi,\alpha}^\ast{\GG^d_d}\arrow[d, "M_{\gamma,\widetilde{\GG}}"]
\\
 \boxtimes_{\psi}(\GG_{\alpha_1}^{\alpha_1},\ldots,
 \GG_{\alpha_n}^{\alpha_n})
 \arrow[rr,"\boxtimes_{\psi}((\phi_{\alpha_l\, d}^{\alpha_l})^{-1}))_{l=1}^n"]
 && \boxtimes_{\psi}(\GG_{\alpha_1}^{d},\ldots,
 \GG_{\alpha_n}^{d})
 \arrow[rr, "\mu_{\psi,\alpha}^d"]& 
& a_{\psi,\alpha}^\ast{\GG^d}
\end{tikzcd} 
        \end{equation*}
The right square commutes by braiding-monodromy compatibility of $\GG^d$ and the left square commutes because the braiding is a natural transformation between the functors $\boxtimes_{\varphi}$ and $\boxtimes_{\psi}$ and $(\phi_{d_ld}^{d_l})_{l=1}^n$ is a morphism in $\PPP_{\leq {d_1}}(1)\times\cdots\times\PPP_{\leq {d_n}}(1)$. Hence, the exterior square commutes, giving braiding-monodromy compatibility for $\mu$. 
\end{comment}

\medskip

It is straightforward to verify that $\mu_{\varphi,\alpha}$ satisfies symmetric equivariance. Hence, $(\widetilde{\GG},\mu)$ is an object in $\FP$. A tedious but straightforward calculation shows that the collection $(\phi^{-1}_{nd})_{n\leq d}\colon ({\mathcal G}^n_n)_{n\leq d}\to {\mathcal G}^d$ for each $d\in \NN$ gives an isomorphism between ${\mathcal Y}{(\widetilde{\GG},\mu)}=((({\mathcal G}^n_n)_{n\leq d},\mu)_{d\geq 1},(\id_{cd})_{c\leq d})$ and $(({\mathcal G}^d,\mu^d)_{d\in\NN_{\geq1}},(\phi_{cd})_{c\leq d})$. In fact, the assignment 
\begin{equation*}(({\mathcal G}^d,\mu^d)_{d\in\NN_{\geq1}},(\phi_{cd})_{c\leq d})\to ((({\mathcal G}^n_n)_{n\leq d},\mu)_{d\geq 1},(\id_{cd})_{c\leq d})\end{equation*} defines a quasi-inverse $\mathcal{Y}'$ of $\mathcal{Y}$.

\medskip

(2) This follows from (1) and Theorem \ref{thm:equivalence}.
An explicit equivalence is given by the functor $\mathcal Z\colon \FP\to \varprojlim_{d\in\NN} \FP_{\leq d}$  given by 
\begin{equation*}
{\mathcal Z}({\mathcal F},\mu)=
((j_{\leq d}^*({\mathcal F},\mu))_{d\in\NN},(\id_{cd})_{c\leq d})\end{equation*} on objects and by ${\mathcal Z} (f)=(j^*_{\leq d}f)_{d\in\NN}$ on morphisms.  

\medskip

(3) Let $(\FF,\mu),(\GG,\nu)\in\FP$, let $f\colon (\FF,\mu)\to(\GG,\nu)$ be a morphism in $\FP$, and let $\varprojlim_{d\in\NN} ((\ )^{\leq d}))_{d\in\NN}$ be the limit functor of the sequence of truncation functors.  Then, 
\begin{align*}
\varprojlim_{d\in\NN} ((\ )^{\leq d}))_{d\in\NN}\circ {\mathcal Z}(\FF,\mu)&= (((\FF,(\mu))^{\leq d})_{d\in\NN},(\id_{cd})_{c\leq d})={\mathcal Y}(\FF,\mu),\\
\varprojlim_{d\in\NN} ((\ )^{\leq d}))_{d\in\NN}\circ {\mathcal Z}(f)&=(f_d)_{d\in\NN}={\mathcal Y}(f),\end{align*} giving the claim. 
\end{proof}

\section{Acknowledgements}
Project funded by the EuropeanUnion – NextGenerationEU under the National
Recovery and Resilience Plan (NRRP), Mission 4 Component 2 Investment 1.1 -
Call PRIN 2022 No. 104 of February 2, 2022 of Italian Ministry of
University and Research; Project 2022S8SSW2 (subject area: PE - Physical
Sciences and Engineering) ``Algebraic and geometric aspects of Lie theory". G.C. and L. R. y D. are  members of the INdAM group GNSAGA.
\normalsize

\newpage

\begin{center}
\begingroup
\setlength{\tabcolsep}{10pt} 
\renewcommand{\arraystretch}{1.5} 
\begin{tabular}{c|c|c}
SYMBOL& DESCRIPTION & PAGE OR EQUATION\\
\hline
$\NN_{\leq d}$, resp. $\NN_{\geq d}$& natural numbers $\leq d$, resp. $\geq d$ &\\
$I=(0,1)$ &unit interval in $\RR$ &page \pageref{UU}\\
$U=I\times I$& unit square in  ${\mathbb R}^2\simeq\CC$ & page \pageref{UU}\\
$\varprojlim_{d\in {\mathbb N}}{\mathcal C}_d$& inverse limit of categories & page \pageref{limit}\\
$\boxtimes_{\varphi}$ & external tensor product &\pageref{boxphi}\\
$\underline{R}_\gamma$& external braiding & \eqref{eq:defR}\\
$\mathrm{Top}^{\coprod{}}$&category of topological spaces $\otimes=\coprod$& Example \ref{ex:top}\\
$\mathrm{Top}^{\prod{}}$& category of topological spaces  $\otimes=\times$&Example \ref{ex:top}\\
$E_2(n)$& space of $n$-ary linear embeddings& Section \ref{sec:little}\\
$U^{\coprod n}$&$\underbrace{U\amalg \cdots\amalg U}_{n \mbox{ \tiny times}}$&\\
$E_2$& operad of little 2-cubes&Section \ref{sec:little}\\ $\Pi_1(E_2)$,& fundamental groupoid of  $E_2$& page \pageref{p:groupoid} \\
${\mathcal S}trat$& category of stratified spaces &page \pageref{p:strat}\\
$\sym(Y)$ & symmetric product of $Y$ & Section \ref{sec:sym-prod} \\
${\mathbf s}_0$& unique point in $\sym^0(Y)$ for any $ Y\subseteq\mathbb C$ & page \pageref{s0}\\
$a_{\varphi}$& multiplication embedding &\eqref{a-phi}\\
$(a_{\varphi})_{\leq d}$& multiplication embedding &\eqref{a-phi-leq}\\
$\sym_{\leq d}(U)$ &  open subset of multiplicity $\leq d$ & \eqref{eq:truncated-strata}\\
$ \sym^{\leq d}(U)$ & $\coprod_{j\leq d}\sym^j(U)$ & \eqref{eq:truncated-strata}\\
$\sym^n_{\neq}(U)$& configuration space of $n$ points in $U$ & \eqref{eq:truncated-strata}\\
$\Sigma$& diagonal stratification& page \pageref{stratification}\\
$\mathrm{P}(n)$ & partitions of $n$& page \pageref{stratification}\\
$\sym_\lambda(U)$& stratum corresponding to $\lambda$ &page \pageref{stratification}\\
$\mathcal{P}(n)$ & perverse sheaves on $\sym(U)^n$&page \pageref{page:P(n)}\\
$d(\lambda)$& depth of a partition or composition& page \pageref{depth}\\
$|\lambda|$& sum of the components & page \pageref{depth}\\
\end{tabular}

\newpage

\setlength{\tabcolsep}{10pt} 
\renewcommand{\arraystretch}{1.5} 
\begin{tabular}{c|c|c}
SYMBOL& DESCRIPTION & PAGE OR EQUATION\\
\hline
$j_{\leq d,\leq c}, j_{\leq d}$& inclusions & \eqref{eq:j-inclusions}\\
$\mathcal{P}_{\leq d}(n)$& perverse sheaves on $\sym_{\leq d}(U)^n$&page \pageref{perverse-X}\\
$\mathcal{P}^{\leq d}(n)$& perverse sheaves on  $\sym(U)^n(\leq d)$&page \pageref{perverse-X}\\
$\mathcal{P}$, $\mathcal{P}_{\leq d}$, $\mathcal{P}^{\leq d}$& $\coprod_n \mathcal{P}(n)$, $\coprod_n \mathcal{P}_{\leq d}(n)$ and $\coprod_n \mathcal{P}^{\leq d}(n)$ &page \pageref{perverse-X}\\
$j_{\leq d}^*$, $(j_{\leq c,\leq d})^*$ & restriction functors to $\PPP_{\leq d}$ & page \pageref{p:funtori-j}\\
$(\ )^{\leq d}$, $(\ )^{\leq c,\leq d}$ & truncation functors to $\PPP^{\leq d}$ & page \pageref{p:funtori-i}\\
$M_\gamma$& monodromy natural transformation& \eqref{eq:emmegamma}\\
$b_\varphi$& $a_\varphi$, $a_{\varphi,\leq d}$, or $a_{\varphi}^{\leq d}$& page \pageref{page:b}\\
$\mu_\varphi$& factorization datum &\eqref{eq:mu-phi}\\
$\mathrm{FD}(\FF)$& factorization data of $\FF$ & page \pageref{page:FD} \\
$a_{\varphi,\alpha}$& restriction of $a_\varphi$ to $\sym^\alpha(U)$& page \pageref{p:a-phi-alpha}\\
$\mu_{\varphi,\alpha}$& restriction of $\mu_\varphi$ to $\sym^\alpha(U)$& page \pageref{p:a-phi-alpha}\\
$\FP$&
 factorized perverse sheaves on $\sym(U)$ & Definition \ref{def:FP}\\
 $\FP_{\leq d}$&
 factorized perverse sheaves on $\sym_{\leq d}(U)$ & Definition \ref{def:FP}\\
 $\FP^{\leq d}$&
 factorized perverse sheaves on $\sym^{\leq d}(U)$& Definition \ref{def:FP}\\
 $\varrho,\varrho_{\leq d},\varrho^{\leq d}$& restriction functors to $\sym^1(U)$& \eqref{eq:varrho}\\
$\mathrm{FD}^v(\FF)$& vertical factorization data  &Definition \ref{def:vfact_dat}\\
$\mathcal{FP}^v$& vertically factorized sheaves & Definition \ref{def:vertical-categories} \\
$\mathcal{FP}_{\leq d}^v$& vertically factorized sheaves & Definition \ref{def:vertical-categories} \\
$(\mathcal{FP}^{\leq d})^v$& vertically factorized sheaves & Definition \ref{def:vertical-categories} \\
$j_{\leq d}^*, j^*_{\leq d,\leq c}$& restriction functors to $\FP_{\leq d}$ & Proposition \ref{prop:def-estensione}\\
$(\ )^{\leq d,\leq e}$, $(\ )^{\leq d}$& truncation functors to $\FP^{\leq d}$ & Proposition \ref{prop:def-estensione}\\
$j_{\leq d,\bullet}, j_{\leq d,\leq c,\bullet}$& extension functors from $\sym_{\leq d}(U)$ & Proposition \ref{prop:def-funtori-restrizione}\\
$(\ )_{\leq d}$& quasi-inverse to $(\ )^{\leq d}$&Theorem \ref{thm:equivalence}
\end{tabular}
\endgroup
\end{center}

\newpage

\end{document}